\documentclass[12pt]{amsart}
\usepackage[top=2cm, bottom=2cm, left=2cm, right=2cm]{geometry}
\usepackage{hyperref}
\usepackage{graphicx,rotating, pb-diagram}
\usepackage[all,cmtip]{xy}

\author[A. D. Smith]{Abraham D. Smith}
\address{McGill University, Department of Mathematics and Statistics\\
         Montreal, Quebec  H3A 2K6\\
         Canada}
\email{\href{mailto:adsmith@msri.org}{adsmith@msri.org}}
\urladdr{\href{http://www.math.mcgill.ca/adsmith}%
         {http://www.math.mcgill.ca/adsmith}}

\thanks{This work is supported by an NSF All-Institutes
Postdoctoral Fellowship administered by the Mathematical Sciences Research
Institute through its core grant DMS-0441170. The author is hosted by the
Department of Mathematics and Statistics at McGill University.}

\title[A Geometry for PDEs, I]
{A Geometry for Second-Order PDEs and their Integrability, Part I}
\date{\today}
\subjclass[2000]{58A15, 37K10}
\keywords{Veronese variety, G-structure, hydrodynamic reduction}

\newtheorem{thm}{Theorem}[section]
\newtheorem{lemma}[thm]{Lemma}
\newtheorem{cor}[thm]{Corollary}
\newtheorem{defn}[thm]{Definition}
\newcommand{\lhk}{\mathbin{\hbox{\vrule height1.4pt width4pt depth-1pt
\vrule height4pt width0.4pt depth-1pt}}} 

\newcommand{\pair}[1]{\ensuremath{\left\langle #1 \right\rangle}}
\DeclareMathOperator{\tr}{tr}

\DeclareMathOperator{\ver}{\mathrm{ver}}

\begin{document}

\begin{abstract}
For the purpose of understanding second-order scalar PDEs and their hydrodynamic
integrability, we introduce G-structures that
are induced on hypersurfaces of the space of symmetric matrices (interpreted
as the fiber of second-order jet space) and are defined by non-degenerate scalar second-order-only
(Hessian) PDEs in any number of variables.  The fiber group is a conformal orthogonal group
that acts on the space of independent variables, and it is a subgroup of the conformal
orthogonal group for a semi-Riemannian metric that exists on the PDE.   These
G-structures are automatically compatible with the definition of hydrodynamic
integrability, so they allow contact-invariant analysis of integrability via
moving frames and the Cartan--K\"ahler theorem.  They directly generalize the
GL(2)-structures that arise in the case of Hessian hyperbolic equations in
three variables as well as several related geometries that appear in the
literature on hydrodynamic integrability.  Part I primarily discusses the motivation, the definition, and the solution
to the equivalence problem, and Part II will discuss integrability in detail.
\end{abstract}

\maketitle
\tableofcontents

\pagebreak

\section*{Introduction}
The primary motivation for this article and its sequel is a geometric
classification of second-order scalar partial differential equations [PDEs] in
any number of variables, \[ F\left(x^1, \ldots, x^n, z, \frac{\partial
z}{\partial x^1} , \ldots, \frac{\partial z}{\partial x^n}, \frac{\partial^2
z}{\partial x^1 x^1}, \ldots, \frac{\partial^2 z}{\partial x^n
x^n}\right)=0.\] Such a classification should describe the intrinsic structure
of the PDEs, meaning that it should be invariant under any contact transformation of
the PDE.  Such a classification is especially interesting if it also
highlights the integrable PDEs as those with certain explicit conditions on
their defining invariants.

A classification of all scalar second-order PDEs under the entire pseudo-group
of contact transformations is infinitely far beyond the scope of this article.  Instead,
this article focuses on the special case of Hessian PDEs, which are those of the form \[
F\left(\frac{\partial^2 z}{\partial x^1 x^1}, \ldots, \frac{\partial^2
z}{\partial x^n x^n}\right)=0.\] A Hessian PDE may be interpreted as a
hypersurface $F^{-1}(0)$ in $\mathrm{Sym}^2(\mathbb{R}^n)$.  The class of
Hessian PDEs is important for two reasons.
First, this class of PDEs contains many interesting examples, such as the wave
equation, the first flow of the dispersionless Kadomtsev--Petviashvili hierarchy, and the
symplectic Monge--Amp\`ere equations.  Second, a recent theorem of Dennis The
shows that any classification of Hessian PDEs with non-degenerate symbol up to
the standard action of the conformal symplectic group yields a (somewhat
coarse) classification of all second-order PDEs with non-degenerate symbol up
to contact transformation \cite[Section 2.3]{The2010}.  This is because the
contact transformations restrict on each fiber of second-order jet space to
give the conformal symplectic transformations, and Hessian PDEs may be
considered as the intersection of a general second-order PDE with any
particular fiber of second-order jet space.

For functions $z$ over the real numbers, hyperbolic PDEs---those with
leading symbol of signature $(n-1,1)$---are particularly interesting, since these have the
least-degenerate characteristics and are thus most relevant to any reasonable notion of
integrability.  Aside from occasionally superfluous scaling factors, most
results here also apply to the complex case with arbitrary non-degenerate symbol.

This article fits within a constellation of recent results on hyperbolic
Hessian PDEs.
In the case $n=2$, a highly detailed classification of hyperbolic Hessian PDEs
is given by Dennis The using the extrinsic geometry of surfaces embedded in
$\mathrm{Sym}^2(\mathbb{R}^2) \cong \mathbb{R}^3$ \cite{The2008}
\cite{The2010}.  In the case $n=3$, an intrinsic classification of integrable
hyperbolic Hessian PDEs is given by the leaves of a singular foliation of
$\mathbb{R}^9$, as found by the present author using intrinsic $GL(2)$
geometry \cite{Smith2009a}.  The project \cite{Smith2009a} was inspired by
earlier extrinsic work of Ferapontov \emph{et al.} that links the Veronese cone to the notion of hydrodynamic integrability \cite{Ferapontov2009}.   In the case $n=4$, Doubrov and Ferapontov classify
the symplectic Monge--Amp\`ere equations, which form an important subclass of
integrable hyperbolic Hessian PDEs \cite{Doubrov2009}.  Another 
related study is \cite{Alekseevsky2010}, which uses the Veronese cone to
investigate the Cauchy problem for a class of hyperbolic Hessian
PDEs introduced by Goursat.  Also, in the case of arbitrary $n$, quasilinear 
second-order PDEs (not Hessian) are analyzed in \cite{Burovskiy2008} using an extrinsic $SL(n+1)$
geometry that is related to the geometry seen here for Hessian PDEs by the
sort of fiber-wise coarse classification mentioned above.
In this project, the
geometry of Hessian PDEs with non-degenerate symbol is considered for general $n$
in both the integrable and non-integrable cases.
Thus, this project offers a unified view of the geometries that provide these
recent results.

In Part I, we introduce a particular $G$-structure (see Definition~\ref{maindef}) that is
induced on non-degenerate Hessian PDEs, interpreted as hypersurfaces of
$\mathrm{Sym}^2(\mathbb{R}^n)$.  These $G$-structures admit very simple global
structure equations that have only finitely many local invariants (see
Theorem~\ref{thmembedstr}).   These
structure equations can be readily computed for any $n$, but attention here is
restricted to the cases $n=2$, $n=3$, and $n=4$, which are most relevant for physical
examples.  This geometry is not strictly-speaking ``new,'' as it has been encountered in various guises and special
cases in each of the references given above, but no previous work takes
complete advantage of this geometry as a concept that is well-defined on any
non-degenerate Hessian PDE in any number of variables and
admits invariant analysis using the method of equivalence.  The main theorem
is a complete set of structure equations that indicate how to classify all
such PDEs.  
In implementing the method of equivalence here, we attempt to keep the representation theory as elementary as possible,
focusing only on a rough decomposition of vector spaces into submodules under
the action of specific orthogonal
groups.  This approach has the advantage of keeping the structure equations in a form that can
be easily entered into computer algebra systems without any understanding of spinor representations or
Clebsch--Gordan decompositions.  However, finer detail would result from
application of such knowledge for a specific group $SO(n-k,k)$.
In the sequel\footnote{Part I and Part II will be separate as preprints on the
arXiv, but it is likely that they will be unified for journal submission.},
Part II, the relationship between these local invariants and the property of hydrodynamic
integrability will be explored more deeply.

I wish to express my thanks to Niky Kamran, for his encouragement of
my pursuit of this geometry, to Dennis The for many stimulating discussions
of PDE and jet-space geometry, and to Francis Valiquette for expanding my perspective on the
equivalence problem.

\textbf{Use of Indices:}  Because the geometry in this article uses a representation
of $SO(n)$ other than the standard representation, the conventional up/down
index summation notation is \emph{not} used, as it would lead to confusion.  
Sums are indicated explicitly with $\sum$.   Whether an index is up or down
for a particular object depends only on aesthetics and convenience, and the
reader must keep track of whether a particular object is co- or
contra-variant.    Moreover, throughout this article, juxtaposition without
indices 
\emph{always} means matrix multiplication in $\mathfrak{gl}(n)$.  Thus, if $a$
and $b$ are differential forms valued in $\mathfrak{gl}(n)$, then $a\wedge b$
indicates the $\mathfrak{gl}(n)$-valued differential form with $(i,j)$ entry
$(a \wedge b)_{ij} =\sum_k a_{ik}\wedge b_{kj}$.  In particular, juxtaposition is never used to
indicate composition of functions; instead, the notations $f \circ g$ and
$f(g)$ are used as appropriate.  The tensor, symmetric, and skew products are
denoted by $\otimes$, $\odot$, and $\wedge$, respectively.  The symbol
$\nabla(x)$ always indicates the covariant derivative of $x$ with respect to a
connection $\theta$ that acts on $x$ through a representation $\rho$, so
$\nabla(x) = \mathrm{d}x + \rho_\theta(x)$.  Finally, $V \otimes W^*$ is
identified with $\mathrm{Hom}(W,V)$.

\section{Background and Definitions}
\label{back}
This section discusses the conformal symplectic group over
$\mathrm{Sym}^2(\mathbb{R}^n)$, which is the ambient structure that induces 
the intrinsic geometry on a Hessian PDE.

\subsection{The Conformal Symplectic Group}
Let $V = \mathbb{R}^n$, considered as row vectors.  Let $V \odot V =
\mathrm{Sym}^2(V)$ be identified with the space of $n \times n$ symmetric
matrices.
Consider the principal bundle $\mathcal{F}$ of $(V \odot V)$-valued co-frames
over the manifold $\Lambda^o=\mathrm{Sym}^2(V)$.  Obviously, $\Lambda^o$ and $V
\odot V$ are identical, but here $\Lambda^o$ refers to the set of
symmetric matrices regarded as a manifold, which can be seen as
a simply
connected coordinate chart in the Lagrangian Grassmannian $\Lambda =
LG(n,2n)$, and $V \odot V$ refers to the set of symmetric matrices
regarded as a vector space.  The fiber over $U \in
\Lambda^o$ is 
\[ \mathcal{F}_U = \left\{ a : \mathbf{T}_U
\Lambda^o \overset{\sim}\to V \odot V \right\} \cong
GL\left(n(n+1)/2\right).\]

There is a distinguished reduction of this coframe bundle to a $GL(n)$-bundle,
and after prolongation, this distinguished reduction has a total space
isomorphic to the conformal symplectic group.  There are three ways to
see this structure, and all of them are important to understand.

First, the projective space $\mathbb{P}(V \odot V)$ contains a non-degenerate sub-variety, the
Veronese variety, which is defined as the projective variety corresponding to
the image of the Veronese map $ \ver_2: V
\to V \odot V$ by $\ver_2(v) =v^{\top} v$.  In particular, the Veronese cone $\ver_2(V)$, is the
de-projectivized variety comprised of rank-one symmetric matrices, and the
cone is in birational correspondence with $V$.  The action on
$\mathbb{P}(V{\odot}V)$ by symmetries of $\ver_2(\mathbb{P}V)$ is a
representation of $PGL(n)$ given by $[A] \mapsto [ g^{\top} A g ]$ for any
representative $g$ of $[g] \in PGL(n) = GL(n)/(\mathbb{R}I)$.  Here, this representation is
called $POd(PGL(n))$.
For the Veronese cone in the affine space $V \odot V$ over $\mathbb{R}$, an accurate description
of the symmetry group $Od(GL(n))$ is a little more complicated; see
Appendix~\ref{neg}.
One may consider an $Od(GL(n))$ reduction of $\mathcal{F}$.  The reduced
bundle, $\mathcal{F}_{Od(GL(n))}$, has a tautological semi-basic 1-form $\alpha=
(\alpha_{ij}) = (\alpha_{ji}) = \alpha^{\top}$ that takes values in $V \odot
V$, and the flat choice of section
for the frame bundle, $a_{ij} = \mathrm{d}U_{ij}$, defines a pseudo-connection
$\beta = (\beta_{ij})$ valued in $\mathfrak{gl}(n)$.  A study of the
equivalence problem for this bundle (see \cite{Singer1965}) shows that
after one prolongation, the total space of the bundle has the structure
equations of $\mathfrak{sp}(n)$, 
the symplectic algebra on $\mathbb{R}^{2n}$:
\begin{equation}
\begin{split}
\mu &=\begin{pmatrix}
\beta & \gamma\\ \alpha & -\beta^{\top}
\end{pmatrix},\ \text{for}\  \alpha = \alpha^{\top}, \gamma=\gamma^{\top}, \beta \in
\mathfrak{gl}(n)\\
0&=\mathrm{d}\mu + \mu \wedge \mu = 
\begin{cases}
\mathrm{d}\alpha_{ij} + \sum_{k} \alpha_{ik}\wedge\beta_{kj} - \beta_{ki}
\wedge \alpha_{kj}, & \\
\mathrm{d}\beta_{ij} + \sum_{k} \beta_{ik} \wedge \beta_{kj} +
\gamma_{ik}\wedge \alpha_{kj},
& \\
\mathrm{d}\gamma_{ij} + \sum_{k} \beta_{ik} \wedge \gamma_{kj} -
\gamma_{ik}\wedge \beta_{jk}.
\end{cases}
\end{split}
\label{mc}
\end{equation}
Thus, the total space of the prolonged, flat $Od(Gl(n))$-structure is 
the unique simply-connected Lie group that is the open set near the identity
of $Sp(n)$.  Over the reals, when the $\pm 1$ scaling is allowed, the group
completes to become the conformal symplectic group, $CSp(n) \subset
GL(2n,\mathbb{R})$, 
\begin{equation}
CSp(n) 
= \left\{\begin{pmatrix}
B & C \\ 
A & D
\end{pmatrix}, 
A^{\top}B - B^{\top}A =D^{\top}C-C^{\top}D=0, D^{\top}B - C^{\top}A = c I_n
\neq 0
\right\}.
\label{csp}
\end{equation}

Second, consider the action on the space of symmetric bilinear forms, $V^* \odot 
V^*$, that is induced by the standard representation of $GL(n)$ on $V$.  That
is, consider how the coefficients $\Phi_{ij}$ in the equation
$\sum_{ij}\Phi_{ij} v_i w_j =0$ vary when the coordinates $(v_1, \ldots, v_n)$
are transformed by $v \mapsto vg$ on $V$.  This action is 
$Od^*(GL(n))$, given by $\Phi \mapsto \lambda g^{-1} \Phi g^{-1,\top}$.  
Now, any such $\Phi \in V^* \odot V^*$ may be considered as an element of $(V
\odot V)^*$,
so it defines a hyperplane $\Phi^\perp = \{ a  \in V \odot V : \tr(\Phi a) = 0
\} \subset V \odot V$, which is unique up to scale.  By the law of inertia,
the action $Od_g$ is transitive on symmetric matrices of the same signature.
Thus, $Od(GL(n))$ may also be characterized
as the group that acts transitively on the space of
non-degenerate hyperplanes in $V \odot V$ (preserving signature over the
reals), so $\mathcal{F}_{Od(GL(n))}$ is also the bundle of
frame changes that act transitively on the corresponding codimension-one distributions over
$\Lambda^o$.
This is essentially the perspective taken by Cartan in case ``$\alpha$'' of
Theorems XIX and XX of \cite{Cartan1909} and subsequently clarified in \cite{Singer1965}
\cite{Guillemin1966}, and \cite{Shnider1970}.

Finally, here is the tautological description of jet space that appears in
\cite{Yamaguchi1983}.  In this description, zeroth-order jet space is $J^0$, a
trivial bundle over $\mathbb{R}^n$ with fiber $\mathbb{R}$.  The first-order
jet space is $J^1 = Gr_n(\mathbf{T}J^0) = \mathbf{T}^*J^0$, which admits a
tautological 1-form, $\Upsilon_E = E^\perp \circ \pi^1_0$, where  $E^\perp$ is
the annihilator of $E \in Gr(\mathbf{T}J^0)$ and $\pi^1_0$ is the projection
from $J^1$ to $J^0$.  The Pfaff theorem holds on the differential system
generated by $\Upsilon$, so the space of maximal integral elements
$\mathcal{V}_{\text{max}}(\Upsilon) \subset Gr_n (\mathbf{T}J^1)$ is a smooth
bundle over $J^1$.  The total space of this bundle is $J^2$, and each fiber of
the projection $\pi^2_1:J^2 \to J^1$ is
isomorphic to the Lagrangian Grassmannian $\Lambda = \{ E \in Gr_n(2n) :
\sigma|_E = 0\}$ that is associated to the standard symplectic form $\sigma$
on $\mathbb{R}^{2n}$.  Given local coordinates $(x^i,z,p_i,U_{ij})$ with
$U_{ij} = U_{ji}$ on $J^2$ and an independence condition $\mathrm{d}x^1 \wedge
\cdots \wedge\mathrm{d}x^n \neq 0$, the canonical system
generated by $\Upsilon$ may be written as 
\begin{equation}
\begin{split}
\Upsilon &= \mathrm{d}z - \sum_i p_i \mathrm{d}x^i,\ \text{and}\\
\mathrm{d}\Upsilon &= \sum_i \mathrm{d}p_i \wedge \mathrm{d}x^i,\ \text{so} \\
0 &= \mathrm{d}p_i - \sum_j U_{ij} \mathrm{d}x^j.
\end{split}
\end{equation}
A contact transformation is an isomorphism of the bundle $J^2$ that
preserves the canonical system generated by $\Upsilon$ up to scale.  The contact
transformation on $J^2$ restricts to the fiber as an action of the conformal
symplectic group on $LG(n,2n)$.  In local coordinates $(U_{ij})$ for the
fiber, this appears as the $Od(GL(n))$ action on the matrix $U \in \Lambda^o$.

Thus, the conformal symplectic group, which is the prolongation of the
flat $Od(GL(n))$ frame bundle on $\Lambda^o$, is the appropriate setting to
study the properties of Hessian PDEs that are invariant under the family of  
contact transformations that preserve a specific fiber of $J^2$. 
For more detail regarding this fiber-wise action and the associated
notion of constant symplectic invariant, consult Section 2.3 of \cite{The2010}.

\subsection{Hypersurfaces and Hyperbolicity}
Suppose that $F^{-1}(0)$ is a hypersurface in $\Lambda^o$, and only consider
the hypersurface near points where $\mathrm{d}F \neq 0$ so that the implicit
function theorem applies.
Let $a=(a_{ij})$ be
a $(V \odot V)$-valued moving frame on $\Lambda^o$ that is
$Od(GL(n))$-equivalent to the flat section, $\mathrm{d}U_{ij}$, so $a$ is a
section of $\mathcal{F}_{Od(GL(n))}$.  Then $\mathrm{d}F_U = \sum_{ij}
\Phi_{ij}(U)a_{ij}(U)$ for some $\Phi:\Lambda^o \to (V \odot V)^* = V^* \odot
V^*$.  When $\Phi$ is interpreted as a symmetric bilinear form on $V$, namely
$(v,w) \mapsto v \Phi w^{\top}$, it is precisely the leading symbol of the PDE
$F$ as written in the flat coordinates determined by the co-frame $a$.  Under
a coordinate change $g:v \mapsto vg$ for $g \in GL(n)$, the symmetric bilinear
form changes as $\Phi \mapsto g^{-1} \Phi g^{\top,-1}$.  This corresponds to
the $Od_g$ action on the co-frame $a$.

At each point $U \in F^{-1}(0)$, the intersection $a(\ker\mathrm{d}F|_U) \cap
\ver_2(V)$ gives the equation of a quadric in $\mathbb{P}V$, $ 0 = \sum_{ij}
\Phi_{ij}(U) v_i v_j$.  This quadric is non-degenerate if and only if the
matrix $\Phi(U)$ is non-singular.  In the real case, the most interesting case
is the maximal intersection, which occurs when $\Phi(U)$ has signature $(n-1,1)$.
In this case, the hypersurface is called \textbf{hyperbolic} at $U$.  Given the
discussion $J^2$ in the previous section, this matches the traditional notion
of hyperbolicity for PDEs.  Because the signature of $\Phi(U)$ is preserved by
the $Od^*(GL(n))$ action, the signature of non-degenerate $\Phi(U)$ is an
example of a ``constant symplectic invariant'' in the terminology of
\cite{The2010}, so this is an easy way to see that hyperbolicity (or any other
non-degenerate signature) is a contact-invariant property of a real
second-order PDE near generic points in jet space where the highest-order
terms of $F$ have maximal rank. 

Now, consider the subgroup of $Od(GL(n))$ that preserves $\ker
\mathrm{d}F|_U$.  This subgroup must preserve the bilinear form $\Phi(U)$ up to
scale, so it contains $O(n,\Phi(U))=\{ g \in GL(n) : g \Phi(U) g^{\top} = 
\Phi(U) \}$.  The representation theory of this group is
central to the main result, so it is explored in the next section before
proceeding to the local geometry.

\subsection{Infinitesimal Geometry}
\label{infingeom}

Consider $V = \mathbb{R}^n$ as the vector space of row vectors.  Fix a
non-degenerate symmetric bilinear form $\Phi$ on $V$.
Denote the $\Phi$-null cone in $V$ by $\mathcal{N} = \{ v \in V : v \Phi v^{\top} =
0\}$. Let $O(n,\Phi) = \{ g \in GL(n) : g\Phi g^{\top} = \Phi \}$ which has Lie
algebra $\mathfrak{so}(n,\Phi) = \{ X \in \mathfrak{gl}(n) : X\Phi + \Phi
X^{\top} = 0 \}$.  If $\Phi$ has signature $(n-1,1)$, then the Lie group
$O(n,\Phi)$ is isomorphic to $O(n-1,1)$, the 
Lorentz group.  Let $G$ denote the conformal Lorentz group, $G=CO(n,\Phi) = \{ g \in GL(n) : g
\Phi g^{\top} = \lambda \Phi,\ \lambda \neq 0\}$, which has Lie algebra
$\mathfrak{g}=\mathfrak{co}(n,\Phi) = \mathfrak{so}(n,\Phi) +
\mathbb{R}I_n$.  Under the standard representation $v \mapsto vg$, the group
$G$ has three orbits on
$\mathbb{R}^n$: the light cone $\mathcal{N}$, the time-like region $\{v :
v\Phi v^\top < 0\}$, and the space-like region $\{ v : v \Phi v^{\top} > 0\}$.

For any $A \in \mathfrak{gl}(n)$, define the $\Phi$-trace of $A$ as
$\tr_\Phi(A) = \sum_{ij} \Phi_{ij}A_{ji} = \tr(\Phi A)$.  Let $\mathcal{S}$
denote the vector space of $\Phi$-traceless symmetric matrices, so
\begin{equation} \mathcal{S}= \left\{ A \in \mathrm{Sym}^2(V) ~:~
\tr_\Phi(A)=0\right\}.\end{equation} Note that $\ver_2(\mathcal{N}) =
\mathcal{S} \cap \ver_2(V)$ is the space of rank-one, symmetric,
$\Phi$-traceless matrices.
%

Consider the ``orthogonal adjoint'' representation of $O(n,\Phi)$ on
the vector space $\mathfrak{gl}(n)$ given as
\begin{equation}
Od_g(X) = g^{\top} X g,\ X \in \mathfrak{gl}(n)
\label{Odaction}
\end{equation}
Let $Od(G)$ denote the subgroup of $Od(GL(n))$ defined as $Od(CO(n,\Phi))$ in
Appendix~\ref{neg}, so 
\begin{equation}
Od(G) = \{ A \mapsto \lambda g^{\top} A g,\ g \in O(n,\Phi), \lambda \in
\mathbb{R}^\times\}.
\label{OdG}
\end{equation}
The Lie algebra of this group yields a faithful representation of $\mathfrak{g}$,
namely
\begin{equation}
od(\mathfrak{g}) = \{ A \mapsto X^{\top}A + AX,\ X \in \mathfrak{g}\}.
\label{odaction}
\end{equation}

\begin{lemma}
The group $Od(G)$ preserves $\mathcal{S}$, $\ver_2(V)$, and
$\ver_2(\mathcal{N})$ as varieties in $V \odot V$.
Moreover, $\mathcal{S}$ is an irreducible $Od(G)$-module.
\end{lemma}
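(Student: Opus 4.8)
The plan is to dispatch the three invariance claims by direct computation and to treat irreducibility as the substantive point. For $\mathcal{S}$, I would take a general element $h = \lambda\, g^{\top}(\cdot)\,g$ of $Od(G)$ with $g \in O(n,\Phi)$ and note that $g^{\top} A g$ is symmetric whenever $A$ is, while by the defining relation $g\Phi g^{\top} = \Phi$ and cyclicity of the trace, $\tr_\Phi(\lambda g^{\top} A g) = \lambda\,\tr(g\Phi g^{\top} A) = \lambda\,\tr(\Phi A) = \lambda\,\tr_\Phi(A)$; hence the condition $\tr_\Phi(A)=0$ is preserved, and the same relation shows that $\Phi^{-1}$ spans the invariant line complementary to $\mathcal{S}$. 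For $\ver_2(V)$, regarded as the determinantal variety of symmetric matrices of rank at most one, I would observe that $A \mapsto \lambda g^{\top} A g$ is a linear isomorphism of $V\odot V$ with $g$ invertible and $\lambda\neq 0$, so it preserves the rank of $A$ and therefore the rank-one locus; this is precisely the viewpoint that sidesteps the real positivity subtlety recorded in Appendix~\ref{neg}. Finally $\ver_2(\mathcal{N}) = \mathcal{S}\cap\ver_2(V)$ is preserved as the intersection of two preserved sets.

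For irreducibility I would first reduce to a statement about the rank-one locus. The scaling factor $\lambda$ acts by scalars and so is irrelevant to invariant subspaces; it therefore suffices to treat $\mathcal{S}$ as an $O(n,\Phi)$-module under $A\mapsto g^{\top} A g$. The two facts I would assemble are: (i) the null Veronese points $\ver_2(\mathcal{N}) = \{w^{\top} w : w\Phi w^{\top} = 0\}$ span $\mathcal{S}$, which I would verify by an elementary computation in a standard basis for $\Phi$ --- differences such as $(e_i+e_n)^{\top}(e_i+e_n) - (e_i-e_n)^{\top}(e_i-e_n)$ together with the null combinations $(e_i+e_j+\sqrt2\,e_n)^{\top}(e_i+e_j+\sqrt2\,e_n)$ produce every off-diagonal $E_{ij}+E_{ji}$ and every $\Phi$-traceless diagonal matrix; and (ii) the full (possibly disconnected) group $O(n,\Phi)$ acts transitively on null rays, so $\ver_2(\mathcal{N})\setminus\{0\}$ is a single orbit up to positive scale. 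Granting these, any $O(n,\Phi)$-invariant subspace $W\subseteq\mathcal{S}$ that contains one nonzero rank-one null matrix $w^{\top} w$ must contain its entire orbit, hence all of $\ver_2(\mathcal{N})$, and therefore all of $\mathcal{S}$ by (i), so $W=\mathcal{S}$.

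The crux --- and the step I expect to be the genuine obstacle --- is to produce a rank-one null matrix inside an \emph{arbitrary} nonzero invariant subspace $W$, since a generic element of $\mathcal{S}$ has full rank. Because $\Phi$ is indefinite (or, in the complex case, because every non-degenerate form is split), $O(n,\Phi)$ contains a one-parameter boost $g_s$ that scales a chosen null line $\ell$, viewed as a row vector, by $e^{s}$ and its hyperbolic partner by $e^{-s}$; since $\ell g_s = e^{s}\ell$, the induced action scales the weight-two vector $\ell^{\top}\ell$ by $g_s^{\top}(\ell^{\top}\ell)g_s = e^{2s}\,\ell^{\top}\ell$, while every other basis product scales strictly more slowly. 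Given $0\neq A\in W$, the elements $e^{-2s} g_s^{\top} A g_s$ lie in $W$ for all $s$, and as $s\to\infty$ they converge to a multiple of the rank-one null square $\ell^{\top}\ell$; choosing $\ell$ so that the $\ell^{\top}\ell$-component of $A$ is nonzero --- possible since $A\neq0$ and the null squares span $\mathcal{S}$ --- and using that $W$ is closed, we obtain $\ell^{\top}\ell\in W$, which combined with the transitivity and spanning above forces $W=\mathcal{S}$. I would stress that this real limiting argument, exploiting the full disconnected group, is what keeps the lemma uniform in $n$: for $n=2$ the complexification $\mathcal{S}\otimes\mathbb{C}$ already splits into two weight lines, and only the reflections in $O(1,1)$, which interchange those lines, restore irreducibility over $\mathbb{R}$. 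For $n\ge3$ one may alternatively complexify and invoke the classical irreducibility of the traceless symmetric two-tensors (the harmonic quadratics) as the $O(n,\mathbb{C})$-module of highest weight $2\varepsilon_1$, descending to the real form.
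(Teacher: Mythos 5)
Your proposal is correct, and on the substantive point it takes a genuinely different route from the paper. The invariance claims are handled essentially as in the text (the $\tr_\Phi$ computation is identical; your use of the rank-$\le 1$ determinantal description of the Veronese image is a slightly cleaner way to absorb the $\lambda=\pm1$ scaling than the paper's direct $(vg)^{\top}(vg)$ computation, which strictly speaking only covers $\lambda>0$ over $\mathbb{R}$). The real divergence is in the irreducibility argument. The paper disposes of it in two sentences: $Od(G)$ acts transitively on the nonzero elements of $\ver_2(\mathcal{N})$, that cone spans $\mathcal{S}$, and by Cartan's notion of an ``irreducible Euclidean representation'' this forces irreducibility. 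As you correctly identify, transitivity on a spanning cone does not by itself preclude a proper invariant subspace meeting the cone only at the origin, so the paper is leaning on the cited framework to close that gap. Your boost-contraction argument --- rescaling $e^{-2s}g_s^{\top}Ag_s$ to extract the one-dimensional top weight space $\mathbb{R}\,\ell^{\top}\ell$ from an arbitrary nonzero invariant subspace, with the nonvanishing of the top-weight component guaranteed by pairing $A$ against $\ell'^{\top}\ell'$ via the invariant form --- supplies exactly the missing step in an elementary, self-contained way, at the cost of requiring $\Phi$ to be indefinite (or working over $\mathbb{C}$), which is also the only case the paper's proof genuinely treats. Your remark that the $n=2$ case needs the reflections in $O(1,1)$ to fuse the two weight lines is a point the paper's proof silently skips as well. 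What the paper's approach buys is brevity and uniformity over all signatures via a classical citation; what yours buys is a verifiable proof whose only inputs are the spanning of $\mathcal{S}$ by null squares, transitivity on null rays, and closedness of linear subspaces.
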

\begin{proof}
For any $A \in \mathcal{S}$, then $\sum_{ij}\Phi_{ij}Od_g(A)_{ij} =
\sum_{ijkl}\Phi_{ij}g_{ki}A_{kl}g_{lj} = \sum_{kl} \left( \sum_{ij}
g_{lj}\Phi_{ji}g_{ki} \right) A_{kl} = \lambda \sum_{kl} \Phi_{kl}A_{kl} = 0$.
For any $A \in \ver_2(V)$, there exists $v \in V$ such
that $A=v^{\top}v$. Therefore $Od_g(A) = g^{\top}v^{\top}vg = (vg)^{\top} (vg) \in
\ver_2(V)$.
Since $\ver_2(\mathcal{N})$ equals $\ver_2(V) \cap \mathcal{S}$, it is
also preserved.  The action $Od_g$ is transitive on the non-zero elements of
$\ver_2(V)$
because $\ver_2$ is a bijection onto its image and $G$ is transitive on
$\mathbb{P}V$.

Over $\mathbb{C}$, $\mathcal{N}$ is a non-degenerate affine variety. 
Since $\ver_2(\mathcal{N})$ is in bi-rational correspondence with
$\mathcal{N}$, the action of $Od(G)$ on the non-zero elements of $\ver_2(\mathcal{N})$ is transitive
and irreducible.  In the language of Cartan \cite{Cartan1981}, $\ver_2(\mathcal{N})$ is an
irreducible Euclidean representation for $Od(G)$ spanning the vector space
$\mathcal{S}$.  In particular, this implies that $\mathcal{S}$ is an
irreducible $Od(G)$-module.  
\end{proof}

As an $Od(G)$-module, $V \odot V$ decomposes into
irreducible submodules as $\mathcal{S} +
\mathbb{R}\Phi^{-1}$.  The projection onto the second component is $A \mapsto
\frac1n\tr(\Phi A)\Phi^{-1}$.

Of course, $\mathfrak{gl}(n)$ also admits another action of 
$O(n,\Phi)$, the
well-known (right) adjoint representation.
\begin{equation}
Ad_g(A) = g^{-1}A g.
\label{Adaction}
\end{equation}
Augmenting this action with scalings as in Appendix~\ref{neg},
consider the group $Ad(G) = \{ A \mapsto \lambda g^{-1} A g,\ g \in O(n,\Phi),
\lambda\neq 0\}$.
Let $\mathcal{R}= \{ a: a^{\top} = \Phi^{-1}a\Phi,  \tr a=0\}$, which is the 
space of traceless Ricci tensors for the nondegenerate
symmetric bilinear form $\Phi$.  Then
$\mathfrak{gl}(n)$ decomposes into the $Ad(G)$-submodules $\mathbb{R}I +
\mathfrak{so}(n,\Phi) + \mathcal{R}$.
The first two summands together are $\mathfrak{g}$, the Lie algebra of $G$. 
The projections are given by 
\begin{equation}
\begin{split}
\pi_{\mathbb{R}I}: a &\mapsto \frac1{n} \tr(a) I,\\
\pi_{\mathfrak{so}(n,\Phi)}: a &\mapsto \frac12\left(a - \Phi a^{\top}
\Phi^{-1}\right),\\
\pi_{\mathcal{R}}: a &\mapsto \frac12\left(a + \Phi a^{\top} \Phi^{-1}\right) -
\frac1{n} \tr(a) I,\\
\pi_{\mathfrak{g}}: a  &\mapsto \pi_{\mathbb{R}I}(a)+\pi_{\mathfrak{so}(n,\Phi)}(a).
\end{split}
\end{equation}
Moreover, the $Ad(G)$-module $\mathcal{R}$ is isomorphic to the
(irreducible) $Od(G)$-module $\mathcal{S}$, as seen in Figure~\ref{reps}.  
Generally, the $Ad(G)$-module $\mathfrak{so}(n,\Phi)$ is not irreducible.
The corresponding decomposition for $Od$ is $\mathfrak{gl}(n) =
\mathbb{R}\Phi^{-1} + \mathfrak{so}(n) + \mathcal{S}$ with the obvious
projections. 
\begin{lemma}
The bilinear pairing $\pair{\cdot,\cdot} : \mathcal{S}^*
\otimes \mathcal{S}^* \to \mathbb{R}$ defined by $\pair{A,B} = \frac1n\tr(\Phi A \Phi
B)$ is non-singular and is $Od(G)$-invariant, up to scale.  Therefore, $Od(G)$
is a subgroup of the conformal group $CO(m, \pair{\cdot,\cdot})$.
\end{lemma}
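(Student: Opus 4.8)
The plan is to verify the two analytic claims---invariance up to scale and non-singularity---separately, and then read off the inclusion into $CO(m,\pair{\cdot,\cdot})$, with $m = \dim\mathcal{S} = n(n+1)/2 - 1$, directly from the definition of the conformal group. Symmetry of the form is immediate, since $\tr(\Phi A \Phi B) = \tr\bigl((\Phi A)(\Phi B)\bigr) = \tr\bigl((\Phi B)(\Phi A)\bigr) = \tr(\Phi B \Phi A)$ by cyclicity of the trace.

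First I would dispose of invariance, which is a direct computation. For $g \in O(n,\Phi)$ and $\lambda \in \mathbb{R}^\times$, the element $Od_g$ acts by $A \mapsto \lambda g^{\top} A g$, so
\[
\pair{Od_g(A),Od_g(B)} = \frac{\lambda^2}{n}\tr\!\left(\Phi g^{\top} A g\, \Phi\, g^{\top} B g\right) = \frac{\lambda^2}{n}\tr\!\left((g\Phi g^{\top}) A (g\Phi g^{\top}) B\right),
\]
where the second equality rotates the trailing $g$ to the front by cyclicity. Since $g\Phi g^{\top} = \Phi$ by the defining relation of $O(n,\Phi)$, this equals $\lambda^2 \pair{A,B}$, so $\pair{\cdot,\cdot}$ is $Od(G)$-invariant up to the conformal factor $\lambda^2$.

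The main obstacle is non-singularity, and here the plan is to transport the form onto endomorphisms. The map $A \mapsto \Phi A$ is a linear isomorphism carrying $\mathcal{S}$ onto the space $\mathcal{R}$ of traceless $\Phi$-self-adjoint endomorphisms (realizing the $Od(G)$-module isomorphism $\mathcal{S}\cong\mathcal{R}$ noted above), and it intertwines $\pair{A,B}$ with the restriction to $\mathcal{R}$ of the standard trace form $(X,Y)\mapsto \tfrac1n\tr(XY)$ on $\mathfrak{gl}(n)$, since $\tr(\Phi A \Phi B) = \tr\bigl((\Phi A)(\Phi B)\bigr)$. It therefore suffices to show this trace form is non-degenerate on $\mathcal{R}$. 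For this I would use the involution $\sigma(X) = \Phi X^{\top} \Phi^{-1}$, which satisfies $\sigma^2 = \mathrm{id}$ and preserves the trace form because $\tr\bigl(\sigma(X)\sigma(Y)\bigr) = \tr(X^{\top} Y^{\top}) = \tr(XY)$. Its eigenspace decomposition is $\mathfrak{gl}(n) = \mathcal{H}^{+} \oplus \mathfrak{so}(n,\Phi)$, where $\mathcal{H}^{+}$ is the $+1$-eigenspace of $\Phi$-self-adjoint endomorphisms and $\mathfrak{so}(n,\Phi)$ is the $-1$-eigenspace; the two summands are trace-orthogonal, since $\sigma(X) = X$ and $\sigma(Y) = -Y$ force $\tr(XY) = \tr\bigl(\sigma(X)\sigma(Y)\bigr) = -\tr(XY)$. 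As the trace form is non-degenerate on all of $\mathfrak{gl}(n)$, the standard orthogonal-summand argument forces non-degeneracy on $\mathcal{H}^{+}$; splitting $\mathcal{H}^{+} = \mathbb{R}I \oplus \mathcal{R}$, again trace-orthogonally with $\tr(I\cdot I) = n \neq 0$, then descends non-degeneracy to $\mathcal{R}$, and hence to $\mathcal{S}$.

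Finally, combining the two facts, every $Od_g$ preserves the non-degenerate symmetric form $\pair{\cdot,\cdot}$ up to the nonzero scale $\lambda^2$, which is exactly the defining condition for membership in $CO(m,\pair{\cdot,\cdot})$; hence $Od(G)\subseteq CO(m,\pair{\cdot,\cdot})$. The one point requiring genuine care is the trace-orthogonality of the $\sigma$-eigenspaces together with the descent of non-degeneracy through the two orthogonal splittings, which is the crux of the non-singularity claim; everything else is formal manipulation of the trace.
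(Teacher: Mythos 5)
Your proof is correct. The symmetry and invariance computations coincide with the paper's (the paper simply notes that it suffices to check invariance over $Od(O(n,\Phi))$ and suppresses the $\lambda^2$ factor you carry explicitly; both are fine). Where you genuinely diverge is non-singularity: the paper dispatches it in one sentence, ``Because $\Phi$ is non-singular, $\pair{\cdot,\cdot}$ is also non-singular,'' which on its face only yields non-degeneracy of $(A,B)\mapsto\tr(\Phi A\Phi B)$ on all of $V\odot V$ (via the isomorphism $A\mapsto\Phi A\Phi$ and the standard trace pairing); since a non-degenerate form need not restrict non-degenerately to a subspace, the descent to $\mathcal{S}$ is a real step, and your argument supplies it. Your route---transporting the form to $\mathcal{R}$, splitting $\mathfrak{gl}(n)$ by the involution $\sigma(X)=\Phi X^{\top}\Phi^{-1}$ into trace-orthogonal eigenspaces, and peeling off $\mathbb{R}I$---is sound and has the side benefit of exhibiting the module isomorphism $\mathcal{S}\cong\mathcal{R}$ and the identification of $\pair{\cdot,\cdot}$ with the trace form that the paper uses later. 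A shorter version of the same descent stays inside $V\odot V$: the decomposition $V\odot V=\mathcal{S}\oplus\mathbb{R}\Phi^{-1}$ is orthogonal for the pairing because $\tr(\Phi A\Phi\,\Phi^{-1})=\tr(\Phi A)=0$ for $A\in\mathcal{S}$, and $\pair{\Phi^{-1},\Phi^{-1}}=\frac1n\tr(I)=1\neq 0$, so non-degeneracy on $V\odot V$ forces it on $\mathcal{S}$. Either way, you have proved something the paper only asserts.
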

\begin{proof}
Trace is cyclic, so the pairing is symmetric.  Because $\Phi$ is non-singular,
$\pair{\cdot,\cdot}$ is also non-singular.  To check invariance up to scale, it suffices
to check over $Od(O(n,\Phi))$.  For any $g \in Od(n,\Phi)$,
\[ 
\pair{ Od_g(A), Od_g(B)} = \tr(\Phi g^{\top} A g \Phi g^{\top} B g ) = \tr( g \Phi g^{\top} A
g \Phi g^{\top} B ) = \pair{A,B}. \]
\end{proof}
Though the pairing is nonsingular, it does have null directions if $\Phi$ does.  In
particular, any element of $\ver(\mathcal{N})$ is null.  The trilinear form $\pair{ A^1, A^2, A^3 } = \tr( \Phi A^1 \Phi A^2 \Phi
A^3 )$ is also fully symmetric.
Under the isomorphism from $Od(G)$-modules to $Ad(G)$-modules, it is seen that 
the pairing $\pair{\cdot, \cdot }$ for
$\mathcal{S}$ corresponds to the ``trace form'' pairing $(R,S) \mapsto \tr(RS)$ for $R, S
\in \mathfrak{gl}(n)$ using the $Ad$ action.

Let $Od^\dagger$ denote the adjoint for the pairing, $\pair{Od_g^\dagger(A),B}
= \pair{A,Od_g(B)}$, so $Od_g^\dagger(A) = g^{\top,-1}Ag^{-1} = Od_{g^{-1}}(A)$.
A dual group $Od^*(G)$ acts on
$\mathcal{S}^*=\mathrm{Hom}(\mathcal{S},\mathbb{R})$ with the rule
$F(Od_{g}(A)) = Od_{g^{-1}}^*(F)(A)$, so 
\begin{equation}
\sum_{ij} F^{ij} (g^{\top}Ag)_{ij} =
\sum_{ijkl} F^{ij} g_{ki} A_{kl} g_{lj} = \sum_{kl} (gC g^{\top})_{kl} A_{kl} =
\sum_{kl}(gCg^{\top})_{kl} A_{kl}.
\end{equation}
Therefore, $\mathcal{S}^*$ embeds in the symmetric matrices as $\{
\Phi A \Phi : A \in \mathcal{S}\}$ with the action $Od_g^*$ acting like
$Od_{g^{\top,-1}}$.

Using the $Od^*$ and $Od^\dagger$ identifications of $\mathcal{S}$ with $\mathcal{S}^*$, the space $\mathrm{Hom}(\mathcal{S},\mathcal{S}) =  \mathcal{S} \otimes \mathcal{S}^*$ 
is identified with $\mathrm{Hom}(\mathcal{S},\mathcal{S}^*)=\mathcal{S}^*
\otimes \mathcal{S}^*$ and with $\mathrm{Hom}(\mathcal{S}\otimes\mathcal{S},\mathbb{R})=(\mathcal{S} \otimes
\mathcal{S})^*$ using the bilinear form $\pair{\cdot,\cdot}$ as seen here:
\begin{equation}
\left\{ A \mapsto Q(A) \right\} \leftrightarrow 
\left\{  A \mapsto \Phi Q(A) \Phi \right\}  \leftrightarrow 
\left\{ A{\otimes}B \mapsto \pair{Q(A),B} \right\}.
\end{equation}
These have some important submodules that arise frequently here
\begin{equation}
\begin{split}
\mathcal{Q}_{0}  &= \{ A \mapsto Q(A) = c A,\ c \in \mathbb{R}\} \\
&
\leftrightarrow \{ A \otimes B \mapsto c \pair{A,B}, c \in \mathbb{R} \} \cong
\mathbb{R}\\ 
\mathcal{Q}_{1}  &= \{ A \mapsto Q(A) = \textstyle{\frac12}\left( A \Phi C + C \Phi
A\right) - \pair{A,B}\Phi^{-1} ,\  C \in \mathcal{S} \}\\
& \leftrightarrow \{ A\otimes B \mapsto \pair{A,B,C},\ C \in
 \mathcal{S}\} \cong \mathcal{S}\\
\mathcal{Q}_{-}  &= \{ A \mapsto Q(A), \pair{Q(A),B} = - \pair{A,Q(B)} \}
\cong \mathcal{S}^* \wedge \mathcal{S}^*,\\
\mathcal{Q}_{+}  &= \{ A \mapsto Q(A), \pair{Q(A),B} = \pair{A,Q(B)} \}  \cong
\mathcal{S}^* \odot \mathcal{S}^*
\end{split}
\end{equation}
Of course, $\mathcal{Q}_{-}$ is the set of endomorphisms of $\mathcal{S}$ that are
anti-self-adjoint for $\pair{\cdot,\cdot}$, and $\mathcal{Q}_{+}$ is the set of
endomorphisms of $\mathcal{S}$ that are self-adjoint for $\pair{\cdot,\cdot}$.
When considered as elements of $\mathcal{S}^* \otimes \mathcal{S}^*$ using the
pairing, they describe $\mathcal{S}^* \wedge \mathcal{S}^*$ and $\mathcal{S}^*
\odot \mathcal{S}^*$, respectively.
Both $\mathcal{Q}_0$ and $\mathcal{Q}_1$ are submodules of $\mathcal{Q}_{+}$,
and $\mathcal{Q}_0 + \mathcal{Q}_1$ is identified with the space of symmetric
matrices by mapping the parameters $c$ and $C$ to $c\Phi + C \in
\mathbb{R}\Phi^{-1} + \mathcal{S}$.
There are preferred projections onto these two components, given again by the
identification of $\mathcal{S}$ and $\mathcal{S}^*$,
\begin{equation}
\begin{split}
\Pi_0&: A^* \odot B^* \mapsto \pair{A,B}\Phi^{-1}\\ 
\Pi_1&: A^* \odot B^* \mapsto  \frac12\left( A \Phi B + (A \Phi B)^{\top}
\right)  -
\pair{A,B}\Phi^{-1} \in \mathcal{S}.
\end{split}
\end{equation}
Writing $\Pi= (\Pi_0 + \Pi_1):\mathcal{S}^* \odot \mathcal{S}^* \to
(\mathbb{R}\Phi^{-1} + \mathcal{S})$, set
$\mathcal{Q}_2 = \ker \Pi \subset \mathcal{Q}_{+}$, so $\mathcal{Q}_2
\cong (\mathcal{S}^* \odot \mathcal{S}^*)/(\mathbb{R}\Phi^{-1} + \mathcal{S})$
and $\mathcal{Q}_{+} = \mathcal{Q}_{0} + \mathcal{Q}_{1} + \mathcal{Q}_{2}$.
Overall, $\mathcal{S}^* \otimes \mathcal{S}^* = \mathcal{Q}_{-} +
\mathcal{Q}_{0} + \mathcal{Q}_{1} + \mathcal{Q}_{2}$.

\begin{figure}[h]
\[\xymatrix{
& & g^{\top} A g \ar[lldd] & & \\
& & \mathcal{S}\ar[dl]_{\Phi \bullet} & &\\
g^{-1}Rg \ar[rrdd] & \mathcal{R}\ar[dr]_{\bullet\Phi} & &
\mathcal{R}^*\ar[ul]_{\bullet \Phi^{-1}} &  g^{\top} R^* g^{\top,-1} \ar[lluu]\\
& & \mathcal{S}^*\ar[ur]_{\Phi^{-1}\bullet} & &\\
& & g^{-1}A^*g^{\top,-1} \ar[rruu] & & }\]
\caption{The ($\Phi$ or $I$)-traceless and ($I$ or $\Phi$)-symmetric
$G$-modules and their duals.}
\label{reps}
\end{figure}
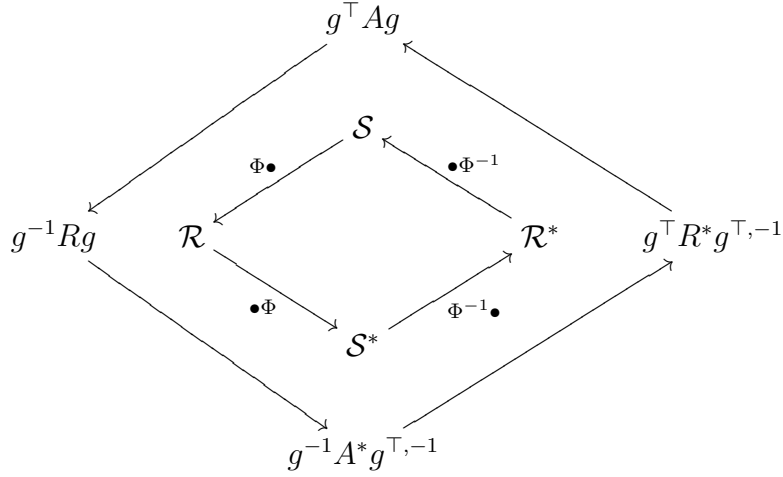

\section{$Od(G)$-Structures}
Let $M$ denote a smooth manifold of dimension $m=\frac12n(n+1)-1$, and let
$\mathcal{F}(M)$ denote the $\mathcal{S}$-valued co-frame bundle over $M$, meaning that
elements of $\mathcal{F}_p(M)$ are linear isomorphisms $\eta_p:\mathbf{T}_pM \to
\mathcal{S}$.  Conventionally $\mathcal{F}(M)$ is a principal right
$GL(\mathcal{S})$-bundle.


\begin{defn}\footnote{In an earlier preprint, these were called ``Veronese structures,'' but in
retrospect, that name is misleading in this context and is better used for a
different but related structure that will appear elsewhere.}
For $M$ of dimension $m=\frac12n(n+1)-1$, an ``$Od(G)$-structure'' with
respect to $\Phi$ is a reduction of the $\mathcal{S}$-valued co-frame 
bundle by the action of $Od(G)$.  In particular, an $Od(G)$-structure is a
principal right $G$-bundle $\mathcal{B}\to M$.
\label{maindef}
\end{defn}

Let $\omega$ denote the tautological $\mathcal{S}$-valued one-form of
an $Od(G)$-structure $\mathcal{B} \to M$, so
$\omega_b(X) = b \circ \pi (X)$ for all $X \in \mathbf{T}_b \mathcal{B}$.  Let
$\eta$ be a local section, $\eta:M \to \mathcal{B}$.  Define a local
trivialization $H:M \times G \to \mathcal{B}$ by 
$H(p,g) = Od_g(\eta_p)$.  Then $\eta^*(\omega) = \eta$ and $H^*(\omega) =
Od_g \circ \eta$.  Then
\begin{equation}
\begin{split}
H^*(\mathrm{d}\omega) &= \mathrm{d}(H^*(\omega)) \\
&= \mathrm{d}(g^{\top}\eta g) = \mathrm{d}g (g^{\top})^{-1} g^{\top} \eta g +
Od_g\circ \mathrm{d}\eta - g^{\top} \eta g g^{-1} \mathrm{d}g\\
&= (g\mathrm{d}g)^{\top} \wedge H^*(\omega) - H^*(\omega)\wedge(g^{-1}\mathrm{d}g)
+Od_g\circ \mathrm{d}\eta \\
\end{split}
\end{equation}
Notice that $\theta=(H^*)^{-1}(g^{-1}\mathrm{d}g)$ is a $\mathfrak{g}$-valued
pseudo-connection on $\mathcal{B}$ with apparent torsion
$T(\omega\wedge\omega)=(H^*)^{-1}(Od_g\circ\mathrm{d}\eta)$.  The apparent
torsion $T$ is a function on $\mathcal{B}$ valued in $\mathcal{S}\otimes
(\mathcal{S}^*\wedge \mathcal{S}^*)$. 
Cartan's first structure equation for $Od(G)$-structures is thus 
\begin{equation}
\mathrm{d}\omega_{ij} = \sum_{k} \left( \theta_{ki} \wedge \omega_{kj} - \omega_{ik}
\wedge\theta_{kj}\right) + T_{ij}(\omega\wedge\omega).
\label{Cartan1}
\end{equation}

\begin{defn}
An $Od(G)$-structure $\mathcal{B}\to M$ is said to be ``embeddable'' if there exists a
bundle embedding into $CSp(N)^o \to \Lambda^o$.  An $Od(G)$-structure is said
to be ``locally embeddable near $b$'' if there is an open neighborhood $U$ of
$b\in \mathcal{B}$ such that $U$ is embeddable.
\end{defn}

The discussion in Section~\ref{back} and the fundamental lemma of Lie groups immediately provide a simple characterization of
locally embeddable structures.
\begin{lemma}
The following are equivalent for an $Od(G)$-structure $\mathcal{B}\to M$ with $p\in M$.
\begin{enumerate}
\item $\mathcal{B}$ is locally embeddable near $b$ for some $b \in \mathcal{B}_p$;
\item there exists a local inclusion $i:M \to \Lambda^o$ near $p$ such that $i(M)=F^{-1}(0)$
for a Hessian PDE $F=0$;
\item there is an $\mathfrak{sp}(n)$-valued $1$-form $\mu$ defined in a
neighborhood of $b \in \mathcal{B}_p$ such that 
\begin{equation}
\mu=\begin{pmatrix}
\beta & \gamma \\ \alpha & -\beta^{\top}
\end{pmatrix}
\end{equation}
with $\alpha=\alpha^{\top}$, $\gamma=\gamma^{\top}$, $\mathrm{d}\mu +
\mu\wedge\mu=0$, and such that $\alpha$ is semi-basic and of maximum rank $m$
on $\mathcal{B}$.
\end{enumerate}
\label{embeddable}
\end{lemma}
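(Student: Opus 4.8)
The plan is to prove the two equivalences (1) $\Leftrightarrow$ (2) and (1) $\Leftrightarrow$ (3) separately, taking the geometric input from Section~\ref{back} and using the fundamental lemma of Lie groups --- the converse to the Maurer--Cartan equation --- to reconstruct an embedding from a flat $\mathfrak{sp}(n)$-valued form. The equivalence (1) $\Leftrightarrow$ (2) is essentially a dimension count combined with the symbol analysis of Section~\ref{back}. A bundle embedding $\iota\colon\mathcal{B}\hookrightarrow CSp(n)^o$ covers a base map $\bar\iota\colon M\to\Lambda^o$; since the ambient tautological form $\alpha$ pulls back to a semibasic form of rank $m=\dim M=\dim\Lambda^o-1$, the map $\bar\iota$ is a codimension-one immersion, and after shrinking to an embedding the implicit function theorem supplies $F$ with $\mathrm dF\neq0$ and $\bar\iota(M)=F^{-1}(0)$, a Hessian PDE. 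Conversely, the hypersurface discussion in Section~\ref{back} shows that $F^{-1}(0)$ carries a well-defined symbol $\Phi$ whose stabilizer of $\ker\mathrm dF$ inside $Od(GL(n))$ is exactly $Od(G)$; restricting the flat bundle $CSp(n)^o$ over $F^{-1}(0)$ and reducing to $Od(G)$ produces $\mathcal{B}$ together with the embedding.

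For (1) $\Rightarrow$ (3) I would pull back the Maurer--Cartan form $\mu_{\mathrm{MC}}$ of $Sp(n)$ and set $\mu=\iota^*\mu_{\mathrm{MC}}$. By naturality of pullback, $\mu$ satisfies $\mathrm d\mu+\mu\wedge\mu=0$ and inherits the block form of~\eqref{mc}, so $\alpha=\alpha^\top$ and $\gamma=\gamma^\top$, while $\alpha$ is semibasic of rank $m$ as noted above. The substantive direction is (3) $\Rightarrow$ (1). Given such a $\mu$ on a neighborhood of $b$, the fundamental lemma of Lie groups furnishes a smooth map $\iota\colon\mathcal{U}\to Sp(n)$, unique up to left translation, with $\mu=\iota^*\mu_{\mathrm{MC}}$.

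The main obstacle is to promote this $\iota$ to a genuine \emph{bundle} embedding realizing the $Od(G)$-structure. Since $\alpha$ is semibasic of rank $m$ and $\mathcal{U}$ has dimension $m+\dim G$, the kernel of $\alpha$ is exactly the vertical distribution; one then checks, using~\eqref{mc}, that the remaining entries of $\mu$ stay independent along these directions, so $\mu$ is pointwise injective and $\iota$ is an immersion, an embedding after shrinking. The base map $\bar\iota\colon M\to\Lambda^o$ then has rank $m$, yielding a codimension-one hypersurface and hence (2). The delicate point is that the reduction realized by $\iota(\mathcal{U})$ is to $Od(G)$ and not to some larger subgroup of the prolonged $Od(GL(n))$: one must show that $\mu$ restricts to a $\mathfrak{g}$-valued form along the fiber, that is, that the vertical part of $\beta$ lands in $\mathfrak{g}=\mathfrak{co}(n,\Phi)$ while $\gamma$ and $\tr_\Phi(\alpha)$ vanish there. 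I expect this to follow from the Maurer--Cartan equation~\eqref{mc} restricted to $\ker\alpha$ together with the first structure equation~\eqref{Cartan1} for $\omega$ and the pseudo-connection $\theta$, with $\tr_\Phi(\alpha)$ singling out $\mathcal{S}\subset V\odot V$ and thereby pinning down the symbol $\Phi$ preserved on each fiber. Once this compatibility is established, $\iota$ intertwines the right $G$-action with the $Od(G)$-action on $CSp(n)^o$, so it is a bundle embedding and (3) $\Rightarrow$ (1) holds, closing both equivalences.
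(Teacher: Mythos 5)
Your proposal is correct and follows essentially the same route the paper intends: the paper offers no written proof beyond the remark that the lemma follows "immediately" from the discussion of hypersurfaces and their symbols in Section~\ref{back} together with the fundamental lemma of Lie groups, and your argument is precisely the elaboration of that remark (pulling back the Maurer--Cartan form of $Sp(n)$ for one direction, integrating a flat $\mathfrak{sp}(n)$-valued form and invoking the symbol/stabilizer analysis to recover the $Od(G)$-reduction for the other). The compatibility issues you flag in the step (3) $\Rightarrow$ (1) are real but are exactly the points the paper silently absorbs into "the discussion in Section~\ref{back}," so your treatment is, if anything, more careful than the original.
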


Henceforth, only (locally) embeddable structures are considered.

\subsection{Embeddable Torsion and The First Fundamental Lemma}
In this section, we apply the first step of Cartan's method of equivalence to
normalize the first-order structure equations and find global forms of the connection
and torsion for embeddable $Od(G)$-structures \cite{Ivey2003}.

Before beginning the method, note an important algebraic curiosity that is
inherent to these structures.  For an arbitrary Lie group $H \subset
GL(\mathcal{S})$, consider a $H$-structure $\mathcal{B}$ over a manifold $M$ with tangent space
$\mathbf{T}_pM \cong \mathbb{R}^m =\mathcal{S}$.  In Cartan's tradition, the local
equivalence of $H$ structures can be understood by determining how much apparent torsion
$T:\mathcal{B} \to \mathcal{S}\otimes(\mathcal{S}^* \otimes \mathcal{S}^*)$ can be absorbed by making an alteration of
the $\mathfrak{h}$-valued pseudo-connection of the form $\theta \mapsto \theta
+ P(\omega)$ for $P:\mathcal{B} \to \mathfrak{h}\otimes \mathcal{S}^*$.
Thus, the solution to the equivalence problem of $H$-structures
involves the computation of the skewing map $\delta$ that defines the exact
sequence 
\begin{equation}
0 \to \mathfrak{h}^{(1)} \to \mathfrak{h}\otimes \mathcal{S}^*
\overset{\delta}{\to} \mathcal{S}\otimes(\mathcal{S}^* \wedge
\mathcal{S}^*) \to H^{0,2}(\mathfrak{h}) \to 0
\label{sequence1}
\end{equation}
Informally, $H^{0,2}(\mathfrak{h})$ is the space where invariant torsion is
valued, and $\mathfrak{h}^{(1)}$ controls the uniqueness of global connections
with a given essential torsion.
In the most historically important equivalence problems, the action by the Lie
group $H$ on the co-frames of the manifold $M^m$ is defined by the
\emph{standard representation} of $H$ as embedded in $GL(\mathcal{S})$, so the inclusion
of $\mathfrak{h}$ into $\mathcal{S}\otimes \mathcal{S}^*$ is the identity embedding, and the map
$\delta$ is given by the composition of the maps
\[\mathfrak{h}\otimes \mathcal{S}^* \overset{(i,1)}{\to} (\mathcal{S} \otimes
\mathcal{S}^*) \otimes \mathcal{S}^*
\overset{(1,\wedge)}{\to} \mathcal{S} \otimes(\mathcal{S}^* \wedge
\mathcal{S}^*).\] However, for $Od(G)$-structures, the action of
$\mathfrak{g}$ on $\mathcal{S}$ is given by a different
representation, namely Equation~(\ref{odaction}).  
Thus, for any $P \in \mathfrak{g} \otimes \mathcal{S}^*$, the image of $P$ in
$\mathcal{S} \otimes \mathcal{S}^* \otimes \mathcal{S}^*$ is the map
$(A,B) \mapsto P(A)^{\top} B + B P(A)$, so
$\delta(P)(A,B) = \frac12 
\left( P(A)^{\top} B + B P(A) - P(B)^{\top}A - AP(B)\right)$.  As an
identification of two-forms valued in $\mathcal{S}$, this is written as $\delta(P)(\omega\wedge\omega) =
P(\omega)^{\top} \wedge \omega - \omega \wedge P(\omega)$.
Note the apparent sign change, which is really just a consequence of the rule
$(\alpha\wedge\beta)^{\top} = (-1)^{pq} \beta^{\top}\wedge\alpha^{\top}$ for
matrix-valued $p$- and $q$-forms $\alpha$ and $\beta$.
Conceptually, the computation of the
kernel and co-kernel of $\delta$ is still the appropriate approach, but
the computation relies on this matrix arithmetic.

We now proceed to study $\delta$ by first examining a related linear map
$\bar\Delta$ on a larger domain.
Consider the space 
\[ \mathfrak{gl}(N) \otimes \mathcal{S}^* = \left\{ \left(Y_{ij}^{kl}\right) : Y_{ij}^{kl}=Y_{ij}^{lk},
\sum_{kl}Y_{ij}^{kl}(\Phi^{-1})_{kl} = 0\right\}.\]
Let $f$ denote the map $\mathfrak{gl}(n) \otimes \mathcal{S}^* \to
\mathfrak{gl}(n) \otimes (\mathcal{S}^*\otimes \mathcal{S}^*)$ defined by
$f(Y)(A,B) = Y(A)^{\top}B$ for any $Y \in \mathfrak{gl}(n)\otimes
\mathcal{S}^*$.  Let $\bar\Delta$ denote the skew of $f$, so map $\bar\Delta(Y)(A,B) =
f(Y)(A,B) - f(Y)(B,A) = Y(A)^{\top}B - Y(B)^{\top}A$.

\begin{lemma}
The map $\bar\Delta$ is injective.
\label{kerDelta}
\end{lemma}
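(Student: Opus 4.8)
The plan is to unwind the definition of $\bar\Delta$ into a concrete bilinear identity and then exploit the fact, established earlier, that $\mathcal{S}$ is spanned by the rank-one null elements $\ver_2(\mathcal{N})$. Setting $Z(A) = Y(A)^{\top} \in \mathfrak{gl}(n)$, the hypothesis $\bar\Delta(Y)=0$ reads $Z(A)B = Z(B)A$ for all $A,B \in \mathcal{S}$, so injectivity of $\bar\Delta$ is equivalent to the implication that this symmetry forces $Z\equiv 0$ (hence $Y\equiv 0$). Since $Z$ is linear and $\{v^{\top}v : v \in \mathcal{N}\}$ spans $\mathcal{S}$, it suffices to prove $Z(v^{\top}v)=0$ for each null vector $v$.

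First I would fix a null vector $v$ and abbreviate $D = Z(v^{\top}v)$. Substituting $A = v^{\top}v$ into the identity gives $D\,B = Z(B)\,v^{\top}v$ for every $B \in \mathcal{S}$. The right-hand side is an outer product, with $(i,j)$ entry $(Z(B)v^{\top})_i\, v_j$, so every row of $D B$ is a scalar multiple of the fixed row vector $v$. Thus the row space of $D B$ lies in $\mathbb{R}v$ for all $B \in \mathcal{S}$. Transposing (and using that $B$ is symmetric) this says $B\,W \subseteq \mathbb{R}v^{\top}$ for every $B \in \mathcal{S}$, where $W$ denotes the column space of $D^{\top}$.

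The heart of the argument is then a transitivity statement for the space $\mathcal{S}$ of $\Phi$-traceless symmetric matrices. If $W \neq 0$, choose $0 \neq w \in W$; then $Bw \in \mathbb{R}v^{\top}$ for all $B \in \mathcal{S}$. But the linear map $\mathrm{Sym}^2(V) \to \mathbb{R}^n$, $B \mapsto Bw$, is surjective for any $w \neq 0$, and $\mathcal{S}$ is only a hyperplane in $\mathrm{Sym}^2(V)$, so the image $\{Bw : B \in \mathcal{S}\}$ has dimension at least $n-1$. For $n \geq 3$ this exceeds $1$, contradicting $Bw \in \mathbb{R}v^{\top}$. Hence $W = 0$, so $D = 0$, i.e. $Z(v^{\top}v) = 0$; running this over all null $v$ and invoking the spanning property gives $Z \equiv 0$ and the injectivity of $\bar\Delta$.

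I expect the transitivity step to be the main obstacle, since it is the only place the geometry of $\Phi$ and the dimension $n$ genuinely enter: the conclusion rests on $\{Bw : B \in \mathcal{S}\}$ being more than a line. This is robust for $n \geq 3$, but it degenerates in low dimension (for $n=2$ the null locus is too small and $\mathcal{S}$ fails to act transitively at null directions $w$), so the write-up must either restrict to $n \geq 3$ or treat the two-variable case separately. A purely index-theoretic alternative—absorbing $A$ against the symmetric, $\Phi^{-1}$-traceless tensor $Y$ and solving the resulting linear system modulo the $\Phi$-trace ambiguity—reaches the same conclusion but obscures precisely this dimensional input, so I would keep the rank-one formulation and isolate the transitivity claim as a short linear-algebra sublemma.
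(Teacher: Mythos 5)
Your proof is correct, but it takes a genuinely different route from the paper's. The paper evaluates $Y$ on the tautological coframe, writes $\psi=Y(\omega)$, and applies Cartan's lemma to the identities $\sum_k\psi_{ik}\wedge\omega_{jk}=0$ for several choices of $j$, concluding $\psi=0$ by intersecting the resulting spans; the only input is that the single relation $\tr(\Phi\omega)=0$ leaves the entries of each row of $\omega$ independent. You instead work with the bilinear identity $Z(A)B=Z(B)A$ directly, specialize $A$ to a rank-one null element $v^{\top}v$, and kill $D=Z(v^{\top}v)$ by a rank argument: the image $\{Bw:B\in\mathcal{S}\}$ of a hyperplane of symmetric matrices acting on a fixed nonzero $w$ has dimension at least $n-1>1$, so it cannot sit inside the line $\mathbb{R}v^{\top}$. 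Two remarks on what each approach buys. First, your route depends on $\ver_2(\mathcal{N})$ spanning $\mathcal{S}$, which the paper establishes over $\mathbb{C}$ and which holds for indefinite real signature (the case of interest), but fails for real definite $\Phi$, where the real null cone is trivial; there you would need to complexify (injectivity of a real-linear map follows from that of its complexification), whereas the paper's argument uses only non-degeneracy of $\Phi$ and is signature-blind. Second, your observation that the argument degenerates at $n=2$ is not a defect of your proof but of the statement: for $n=2$ the domain $\mathfrak{gl}(2)\otimes\mathcal{S}^*$ has dimension $8$ while the target $\mathfrak{gl}(2)\otimes(\mathcal{S}^*\wedge\mathcal{S}^*)$ has dimension $4$, so $\bar\Delta$ cannot be injective; the paper's own proof also silently needs $n\geq 3$ (it compares the cases $j=1$, $j=2$, and $j=n$, which collapse when $n=2$, consistent with Corollary~\ref{no2}). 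Your version has the virtue of making this dimensional obstruction completely transparent.
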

\begin{proof}
To study $\bar\Delta$, it is expedient to evaluate the map on 1-forms, which is
anyway the situation that is always needed for local geometry.  Fix arbitrary $Y \in \ker
\bar\Delta$, and let $\psi = Y(\omega)$.  Aside from symmetry, the 1-forms $\omega_{ij}=\omega_{ji}$
have only one linear relation, namely that $\tr(\Phi \omega)=0$.  Because
$\Phi$ is assumed to be non-degenerate, the entries of each row (or column)
of $\omega$ are independent 1-forms.

Then $0=\sum_k\psi_{ik}\wedge\omega_{kj} =  \sum_k \psi_{ik}\wedge\omega_{jk}$ for all $i,j$.  
In the case $j=1$, the 1-forms $\omega_{11}, \ldots, \omega_{1n}$ are
independent, so Cartan's lemma implies that $\psi_{ik} =
\sum_{l}C'_{ikl} \omega_{1l}$ for some functions $C'_{ikl} = C'_{ilk}$.
The case $j=2$ similarly implies that $\psi_{ik} = C''_{ilk}
\omega_{2l}$ for some functions $C''_{ilk} = C''_{ikl}$.
The case $j=n$ similarly implies that $\psi_{ik} = \sum_{l}
C'''_{ikl} \omega_{nl}$ for some functions $C'''_{ikl} = C'''_{ilk}$.
Comparing cases $j=1$ and $j=2$, it must be that $\psi_{ik} \equiv 0$ modulo
$\omega_{12}$.  Comparing cases $j=1$ and $j=n$, it must be that $\psi_{ik}
\equiv 0$ modulo $\omega_{1n}$.  So, $\psi_{ik}=0$, and $Y=0$.
\end{proof}

Let $\bar \delta$ denote the map $\mathfrak{gl}(N)\otimes \mathcal{S}^* \to
(V \odot V) \otimes (\mathcal{S}^* \wedge \mathcal{S}^*)$
that is defined by 
\begin{equation}
\begin{split}
\bar{\delta}(Y)(A,B) &= \frac12\left(od_{Y(A)}(B)  - od_{Y(B)}(A)\right)\\
&=
\frac12\left( Y(A)^{\top}B + (Y(A)^{\top}B)^{\top} - 
Y(B)^{\top}A - (Y(B)^{\top}A)^{\top}\right)\\
&= \bar\Delta(Y)(A,B) + (\bar\Delta(Y)(A,B))^{\top}.
\end{split}\end{equation}
If $Y$ happens to be a change of connection valued in
$\mathfrak{g}\otimes \mathcal{S}^*$, then $\bar\delta(Y) \in \mathcal{S}
\otimes (\mathcal{S}^*\wedge \mathcal{S}^*)$
is the resulting change of torsion.  Thus the skewing map is computed as
$\delta = \bar\delta|_{\mathfrak{g}\otimes \mathcal{S}^*}$.  
\begin{lemma}
\[ \ker \bar \delta = \left\{ A \mapsto c\Phi A, c \in \mathbb{R} \right\} 
+ \left\{ A \mapsto \Phi C \Phi A, C \in \mathcal{S}\right\}.
\] 
Moreover, $\mathfrak{g}^{(1)}=\ker \delta = \ker \bar\delta \cap (\mathfrak{g}\otimes
\mathcal{S}^*) = 0$.  Therefore, for any choice of co-kernel of $\delta$,
every $Od(G)$-structure admits a unique and global connection such that the torsion map $T$
takes values in that co-kernel.
\label{kerdeltabar}
\end{lemma}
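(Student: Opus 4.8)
The plan is to prove the three assertions in order: first the explicit description of $\ker\bar\delta$, then the intersection with $\mathfrak g\otimes\mathcal S^*$ that gives $\ker\delta=\mathfrak g^{(1)}$, and finally the global normalization of the connection, which is the formal consequence of $\mathfrak g^{(1)}=0$. For the description of $\ker\bar\delta$, the inclusion $\supseteq$ is a direct substitution into the symmetry criterion: since $Y\in\ker\bar\delta$ means exactly that $od_{Y(A)}(B)$ is symmetric under $A\leftrightarrow B$, one checks that $od_{\Phi C\Phi A}(B)=A\Phi C\Phi B+B\Phi C\Phi A$ and $od_{c\Phi A}(B)=c(A\Phi B+B\Phi A)$ are both symmetric. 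Because $c\Phi A=\Phi(c\Phi^{-1})\Phi A$, these two families together form the image of the map $C\mapsto(A\mapsto\Phi C\Phi A)$ on $V\odot V=\mathbb R\Phi^{-1}+\mathcal S$; this map is faithful, so the exhibited subspace has dimension $m+1$.

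The substance is the reverse inclusion $\subseteq$. I would rewrite $Y\in\ker\bar\delta$ as the condition that the matrix $\bar\Delta(Y)(A,B)=Y(A)^{\top}B-Y(B)^{\top}A$ is skew-symmetric for all $A,B\in\mathcal S$, and then evaluate on the tautological $\mathcal S$-valued form $\omega$ exactly as in the proof of Lemma~\ref{kerDelta}: with $\psi=Y(\omega)$ the condition becomes the matrix two-form identity $\psi^{\top}\wedge\omega=\omega\wedge\psi$. Since $\Phi$ is non-degenerate, the entries in each row of $\omega$ are independent and the only relation is $\tr_\Phi\omega=0$, so I would collect the coefficient of each independent product $\omega_{ab}\wedge\omega_{cd}$ and solve the resulting linear system, which forces $Y$ into the stated form. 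A parallel route is to polarize the criterion on rank-one elements $A=uu^{\top}$ with $u$ on the $\Phi$-null cone, reducing it to rank-two relations among the vectors $Y(uu^{\top})^{\top}v$ and exploiting the fact that the null vectors span $V$. The main obstacle is the combinatorial bookkeeping: unlike the equation in Lemma~\ref{kerDelta}, the right-hand term $\omega\wedge\psi$ obstructs a one-step application of Cartan's lemma, so the coefficients must be tracked through the comparison of several index patterns.

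Granting the description of $\ker\bar\delta$, the computation of $\ker\delta=\ker\bar\delta\cap(\mathfrak g\otimes\mathcal S^*)$ is clean. The map $Y(A)=\Phi C\Phi A$ is valued in $\mathfrak g=\mathfrak{co}(n,\Phi)$ iff $(\Phi C\Phi A)\Phi+\Phi(\Phi C\Phi A)^{\top}\in\mathbb R\Phi$ for every $A\in\mathcal S$; multiplying on both sides by $\Phi^{-1}$ this says $C\Phi A+A\Phi C\in\mathbb R\Phi^{-1}$, i.e. that the $\mathcal S$-component of the symmetric matrix $C\Phi A+A\Phi C$ vanishes. Writing $C=c_1\Phi^{-1}+C_0$ with $C_0\in\mathcal S$, this $\mathcal S$-component is $2c_1A+2\Pi_1(C_0\odot A)$, so the requirement is that the operator $A\mapsto\Pi_1(C_0\odot A)$ in $\mathcal Q_1$ coincide with the operator $-c_1\,\mathrm{Id}$ in $\mathcal Q_0$. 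Since $\mathcal Q_0$ and $\mathcal Q_1$ are distinct irreducible submodules of $\mathcal Q_{+}$, their only common element is $0$, so both operators vanish; thus $c_1=0$, and the isomorphism $\mathcal S\cong\mathcal Q_1$ gives $C_0=0$. Hence $C=0$ and $\ker\delta=\mathfrak g^{(1)}=0$.

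Finally, the normalization statement is the first step of Cartan's method, now rigorous because $\mathfrak g^{(1)}=0$. Cartan's first structure equation~(\ref{Cartan1}) presents the torsion $T$ of any pseudo-connection as a function valued in $\mathcal S\otimes(\mathcal S^{*}\wedge\mathcal S^{*})$, and replacing $\theta$ by $\theta+P(\omega)$ with $P\in\mathfrak g\otimes\mathcal S^{*}$ changes $T$ by $\delta(P)$. Fixing any complement $\mathcal H$ to $\mathrm{im}\,\delta$ in $\mathcal S\otimes(\mathcal S^{*}\wedge\mathcal S^{*})$, i.e. a realization of the co-kernel $H^{0,2}(\mathfrak g)$ in the sequence~(\ref{sequence1}), there is for each initial pseudo-connection a $P$ with $T+\delta(P)\in\mathcal H$, and $P$ is unique because $\delta$ is injective. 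This normalization is a pointwise, equivariant algebraic prescription, so the locally adjusted connections agree on overlaps and assemble into a single global connection whose torsion takes values in $\mathcal H$, which is the asserted existence and uniqueness.
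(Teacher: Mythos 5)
Your handling of the inclusion $\supseteq$, of the intersection $\ker\bar\delta\cap(\mathfrak{g}\otimes\mathcal{S}^*)$, and of the final normalization statement is correct. Indeed, your argument that the intersection vanishes---writing $C=c_1\Phi^{-1}+C_0$, reducing the membership condition $Y(A)\in\mathfrak{co}(n,\Phi)$ to the vanishing of $2c_1A+2\Pi_1(C_0\odot A)$, and invoking the directness of $\mathcal{Q}_0+\mathcal{Q}_1$ inside $\mathcal{Q}_+$---is more explicit than what the paper records, which dismisses this step as ``easy to check.'' The closing argument (injectivity of $\delta$ plus equivariance gives a unique global connection with torsion in any fixed complement of $\mathrm{im}\,\delta$) is the standard one and matches the paper.

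The genuine gap is in the reverse inclusion $\ker\bar\delta\subseteq\{A\mapsto\Phi D\Phi A:D\in V\odot V\}$, which is the entire content of the first assertion. Your proposal reduces it to ``collect the coefficient of each independent product $\omega_{ab}\wedge\omega_{cd}$ and solve the resulting linear system, which forces $Y$ into the stated form''; this restates the claim rather than proving it, and you yourself flag the resulting bookkeeping as the main obstacle without resolving it. The missing idea is an organization of the identity $\sum_a\tau_{ai}\wedge\omega_{aj}-\omega_{ia}\wedge\tau_{aj}=0$ that makes the system tractable, and the paper supplies one in two stages. First it specializes to the diagonal entries $i=j$, where the identity collapses to $\sum_a\tau_{ai}\wedge\omega_{ai}=0$; since for fixed $i$ the forms $\omega_{1i},\dots,\omega_{ni}$ are independent, Cartan's lemma applies in one clean step and yields $\tau_{ai}=\sum_bC(i)^b_a\,\omega_{bi}$ with each coefficient matrix $C(i)$ symmetric. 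Only then are the off-diagonal entries $i\neq j$ used: substituting this normal form and comparing the coefficients of the now-independent products $\omega_{bi}\wedge\omega_{aj}$ and $\omega_{bj}\wedge\omega_{ai}$ forces all the $C(i)$ to coincide with a single symmetric matrix, i.e.\ $\tau=C\omega$, which is exactly the asserted parametrization after writing $C=\Phi D\Phi$ and splitting $D\in\mathbb{R}\Phi^{-1}+\mathcal{S}$. This diagonal-first reduction is precisely what circumvents the obstruction you identify (the second term $\omega\wedge\psi$ blocking a one-step Cartan's lemma, in contrast to Lemma~\ref{kerDelta}); without it, or an equally concrete device, your proof of the main inclusion is incomplete. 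Your alternative suggestion of polarizing on null rank-one elements is plausible but is likewise left unexecuted.
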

\begin{proof}
Suppose $Y \in \ker \bar\delta \subset \mathfrak{gl}(n) \otimes \mathcal{S}^*$.  Let
$\tau_{ij} = Y_{ij}^{(kl)}\omega_{(kl)}$.
Then 
\begin{equation}
0 = \sum_a \tau_{ai}\wedge\omega_{aj} - \omega_{ia}\wedge\tau_{aj},\ \forall
i,j.
\end{equation}
In the case $i{=}j$, this implies $0=\sum_a \tau_{ai}\wedge\omega_{ai}$.  For
fixed $i$, the collection $\{\omega_{1i},\ldots,\omega_{ni}\}$ is linearly independent, so the
Cartan Lemma implies that there exist functions
$C(i)^{a}_{b}=C(i)^{b}_{a}$ such that $\tau_{ia} = \sum_b
C(i)_{a}^{b}\omega_{ib}$ for all $a,i$.  
Then, for any $i{\neq}j$, 
\begin{equation}
\begin{split}
0&=\sum_a\left(\tau_{ai}\wedge\omega_{aj}+\tau_{aj}\wedge\omega_{ai}\right)\\
&=\sum_{a,b}\left(C(i)_a^b\omega_{bi}\wedge\omega_{aj}+C(j)_a^b\omega_{bj}\wedge\omega_{ai}\right)\\
&= \sum_a \left(C(i)_a^a\omega_{ai}\wedge\omega_{aj}+C(j)_a^a\omega_{aj}\wedge\omega_{ai}\right)\\
&\phantom{=}
+\sum_{a<b}\left(\left(C(i)_a^b-C(j)_b^a\right)\omega_{bi}\wedge\omega_{aj}+
\left(C(j)_a^b-C(i)_b^a\right)\omega_{bj}\wedge\omega_{ai}\right).
\end{split}
\end{equation}
Each term in the previous expression is linearly independent, so $C(i)^a_b =
C(j)^a_b = C^a_b = C^b_a$ for all $i,j,a,b$.  Thus, the kernel of $\bar\delta$
is isomorphic to $V \odot V$ as determined by these
$\frac12n(n+1)$ constants.
It is now easy to check that $\ker\bar\delta$ intersects trivially with $\mathfrak{g}\otimes \mathcal{S}^*$.
\end{proof}
Note that the output of $\bar\delta(P)$ is a symmetric matrix, but it is not
$\Phi$-traceless for general $P \in \mathfrak{gl}(n)\otimes \mathcal{S}^*$, so
general $\bar\delta(P)$ cannot represent an apparent torsion in $\mathcal{S} \otimes (\mathcal{S}^*\wedge \mathcal{S}^*)$. 
Consider the subspace 
\begin{equation}
\mathcal{P} = \bar\delta^{-1}\left(\mathcal{S}\otimes(\mathcal{S}^*\wedge
\mathcal{S}^*)\right) = 
\{ P \in \mathfrak{gl}(n)\otimes \mathcal{S}^* : (\tr_\Phi\otimes 1)(\bar\delta P)=0\},
\end{equation}
along with its image, $\mathcal{E} = \bar\delta(\mathcal{P}) \subset
\mathcal{S} \otimes (\mathcal{S}^* \wedge
\mathcal{S}^*)$.  Of course, $\mathfrak{g}\otimes \mathcal{S}^* \subset \mathcal{P}$, and
$\delta(\mathfrak{g}\otimes \mathcal{S}^*) \subset \mathcal{E}$.  As justified by 
Lemma~\ref{embeddabletorsion}, $\mathcal{E}$ is called the space of ``embeddable torsion.''

\begin{lemma}
If $\mathcal{B} \to M$ is an embeddable $Od(G)$-structure, then for any
(local) pseudo-connection $\theta$, the associated apparent torsion is a map $T:\mathcal{B} \to \mathcal{E}$.  In
particular,
$\mathcal{B}$ admits a function $P:\mathcal{B} \to \mathcal{P}$, unique up to
$\ker \bar{\delta}$, such that  
\[ \mathrm{d}\omega = (\theta + P(\omega))^{\top}\wedge\omega -
\omega\wedge(\theta + P(\omega)).\]
\label{embeddabletorsion}
\end{lemma}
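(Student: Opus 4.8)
The plan is to read the required torsion constraint directly off the first Maurer--Cartan equation provided by embeddability and to compare it with Cartan's first structure equation. By Lemma~\ref{embeddable}(3), on a neighborhood of $b$ there is an $\mathfrak{sp}(n)$-valued $1$-form $\mu$ with $\mathrm{d}\mu+\mu\wedge\mu=0$ whose lower-left block $\alpha$ is semi-basic and of maximal rank $m$. Since the tangent directions of the embedded hypersurface are exactly $\ker\mathrm{d}F$, which the coframe carries into $\mathcal{S}$, this $\alpha$ is $\mathcal{S}$-valued and is precisely the tautological form $\omega$. In the matrix notation of the paper, the first line of~(\ref{mc}) then reads
\[ \mathrm{d}\omega=\beta^{\top}\wedge\omega-\omega\wedge\beta, \]
where $\beta$ is the $\mathfrak{gl}(n)$-valued diagonal block of $\mu$.

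First I would subtract Cartan's first structure equation~(\ref{Cartan1}) for the chosen pseudo-connection $\theta$ from this identity. Writing $\xi=\beta-\theta$, the two expressions for $\mathrm{d}\omega$ give
\[ T(\omega\wedge\omega)=\xi^{\top}\wedge\omega-\omega\wedge\xi, \]
which is exactly the shape of $\bar\delta$ evaluated on a semi-basic change of connection. The crux is therefore to show that $\xi$ is semi-basic, and I expect this to be the main obstacle. I would argue it by comparing vertical behavior: the fiber of $\mathcal{B}\to M$ is the group $G$ acting through $Od$, and on these vertical directions both $\theta$ (which by construction reproduces $g^{-1}\mathrm{d}g$) and the block $\beta$ pulled back from the ambient $\mathfrak{sp}(n)$-bundle reduce to the $\mathfrak{g}$-valued Maurer--Cartan form of $G\subset GL(n)$, the $\alpha$- and $\gamma$-blocks vanishing on such directions. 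Hence $\xi$ annihilates all vertical vectors and is semi-basic, even though it is only $\mathfrak{gl}(n)$-valued rather than $\mathfrak{g}$-valued. This last point is exactly why the enlarged domain $\mathfrak{gl}(n)\otimes\mathcal{S}^*$ and the map $\bar\delta$, rather than $\delta$, are the correct tools here.

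Once $\xi$ is known to be semi-basic, I would use that $\omega$ is a coframe valued in $\mathcal{S}$ to write $\xi=P(\omega)$ for a function $P:\mathcal{B}\to\mathfrak{gl}(n)\otimes\mathcal{S}^*$, so that the displayed identity becomes $T(\omega\wedge\omega)=\bar\delta(P)(\omega\wedge\omega)$. Because $\omega$ is a coframe with values in $\mathcal{S}$, evaluation on $\omega\wedge\omega$ is an injection of $\mathcal{S}\otimes(\mathcal{S}^*\wedge\mathcal{S}^*)$ into the semi-basic $2$-forms, whence $T=\bar\delta(P)$ as tensors. Since $T$ takes values in $\mathcal{S}\otimes(\mathcal{S}^*\wedge\mathcal{S}^*)$ by the general Cartan setup, $P$ lies in $\mathcal{P}=\bar\delta^{-1}\!\left(\mathcal{S}\otimes(\mathcal{S}^*\wedge\mathcal{S}^*)\right)$, so $T=\bar\delta(P)\in\bar\delta(\mathcal{P})=\mathcal{E}$. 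Finally, $\theta+P(\omega)=\theta+\xi=\beta$, so the asserted formula $\mathrm{d}\omega=(\theta+P(\omega))^{\top}\wedge\omega-\omega\wedge(\theta+P(\omega))$ is just the Maurer--Cartan identity again, and uniqueness of $P$ up to $\ker\bar\delta$ holds because any two admissible choices differ by an element of $\ker\bar\delta$, computed in Lemma~\ref{kerdeltabar}.
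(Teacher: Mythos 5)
Your proposal is correct and follows the same overall strategy as the paper: pull back the Maurer--Cartan identity of $\mathfrak{sp}(n)$, identify $h^*(\alpha)$ with $\omega$, subtract Cartan's first structure equation, recognize the discrepancy $\xi=h^*(\beta)-\theta$ as a semi-basic change of connection, and conclude $T=\bar\delta(P)$ with $P\in\mathcal{P}$ because $T$ is \emph{a priori} $\mathcal{S}$-valued. The one place you diverge is the crux step you correctly single out, namely that $\xi$ is semi-basic. You argue this geometrically, by asserting that both $\theta$ and $h^*(\beta)$ restrict on vertical directions to the $\mathfrak{g}$-valued Maurer--Cartan form of the fiber group; this works, but it leans on the bundle embedding being a genuinely equivariant map of principal bundles, which is not immediate if one starts only from condition (3) of Lemma~\ref{embeddable} (flat $\mu$ with $\alpha$ semi-basic of maximal rank), as you do. The paper instead writes $h^*(\beta)=\theta'+\tau$ with $\theta'$ vertical and $\tau$ semi-basic, splits the identity $0=\mathrm{d}\omega-h^*(\mathrm{d}\alpha)$ into its vertical-times-semi-basic and purely semi-basic pieces, and invokes the injectivity of the representation $od$ (Lemma~\ref{injective}) to force $\theta'=\theta$; this algebraic route extracts the conclusion from the structure equation alone, without any assumption on the vertical behavior of $\beta$. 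Either argument closes the gap, but if you want to keep your geometric version you should make the equivariance of $h$ explicit rather than appealing to condition (3).
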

\begin{proof}
Fix a pseudo-connection $\theta$ on $\mathcal{B}$ with apparent torsion
$T(\omega\wedge\omega)$.  Let $\alpha$, $\beta$, and $\gamma$ denote the
blocks of the Maurer--Cartan form of $Sp(n)$, as in Equation~\ref{mc}.  

If $\mathcal{B}$ is locally embeddable via a bundle embedding $h$, then the
$\frac12n(n+1)$ semi-basic one-forms $h^*(\alpha_{jk})$, $j \leq k$ must 
have a single linear relation, namely that $\tr(\Phi h^*(\alpha))=0$.  Thus,
the components of the $\mathcal{S}$-valued tautological form $\omega$ on
$\mathcal{B}$ are given by $\omega_{jk} = h^*(\alpha_{jk})$.
The $\mathfrak{gl}(n)$-valued one-form $h^*(\beta)$ may have both vertical and
semi-basic components, so write $h^*(\beta_{kl})= \theta' +
\tau$ where $\tau \equiv 0 \mod \{\omega_{kl}\}$ and $\theta' \equiv  0 \mod
\{ \theta_{kl}\}$.
Then
\begin{equation}
\begin{split}
0&= \mathrm{d}\omega - h^*(\mathrm{d}\alpha) \\
&= (\theta - h^*(\beta))^{\top} \wedge \omega - \omega \wedge (\theta -
h^*(\beta)) + T(\omega\wedge\omega) \\
&=
\begin{cases}
(\theta - \theta')^{\top}\wedge \omega - \omega \wedge (\theta - \theta') \\
T(\omega\wedge\omega) - \left( \tau^{\top} \wedge \omega - \omega \wedge
\tau\right).
\end{cases}
\end{split}
\end{equation}
By Lemma~\ref{injective}, $\theta'=\theta$.
Also, since $\tau$ is semi-basic, one may write $\tau = P(\omega)$ for some $P:
\mathcal{B} \to \mathfrak{gl}(N) \otimes \mathcal{S}^*$.  Then 
$T=\bar{\delta}(P)$.  Because $T$ is \emph{a priori} valued in $\mathcal{S} \otimes
(\mathcal{S}^*\wedge \mathcal{S}^*)$, it must be that $P \in \mathcal{P}$ and
$T \in \mathcal{E}$.
\end{proof}

The next task is to write $\theta$ and $T$ in a preferred way so that the
structure equations of an embeddable $Od(G)$-structure are global. Consider
Figure~\ref{diagram}, with $\mathcal{T} = \mathcal{E}/\delta  \subset
H^{0,2}(\mathfrak{g})$.  Since $\ker \delta$ and $\mathfrak{g}\otimes
\mathcal{S}^*$ are naturally subspaces of $\mathcal{P}$, a preferred
representative of $[T]$ can given by specifying any section $\sigma: \mathcal{T} \to
\mathcal{P}$, which then yields a corresponding decomposition $\mathcal{P} = \ker \bar\delta +
(\mathfrak{g}\otimes \mathcal{S}^*) + \sigma(\mathcal{T})$.

The next few lemmas specify a preferred cokernel $\mathcal{T}'
=\sigma(\mathcal{T})$ by analyzing the
sub-modules of $\mathfrak{gl}(n)\otimes \mathcal{S}^*$ under 
the $Ad(G) \otimes Od^*(G)$ action.

\begin{figure}[h]
\[\xymatrix{
&&&0\ar[d]\\
&&&\mathfrak{g}\otimes{\mathcal{S}}^*\ar[d]_{\delta}\\
0\ar[r]&{\ker\bar\delta}\ar[r]&
\mathcal{P}\ar[r]^{\bar\delta}&\mathcal{E}\ar[d]\ar[r]&0
&&B\ar@/^{2pc}/[lll]_{T}\ar@/^{2pc}/[llld]^{[T]}\ar@/_{2pc}/[lllu]^{\pi_\mathfrak{g}\circ P}\ar@/_{9pc}/[llll]_{P}\\
&&&\mathcal{T}\ar@{.>}[ul]_{\sigma}\ar[d]\\
&&&0
}\]
\caption{The torsions of an embeddable $Od(G)$-structure.  The quantity $P$
depends on the choice of the section $\sigma$.}
\label{diagram}
\end{figure}
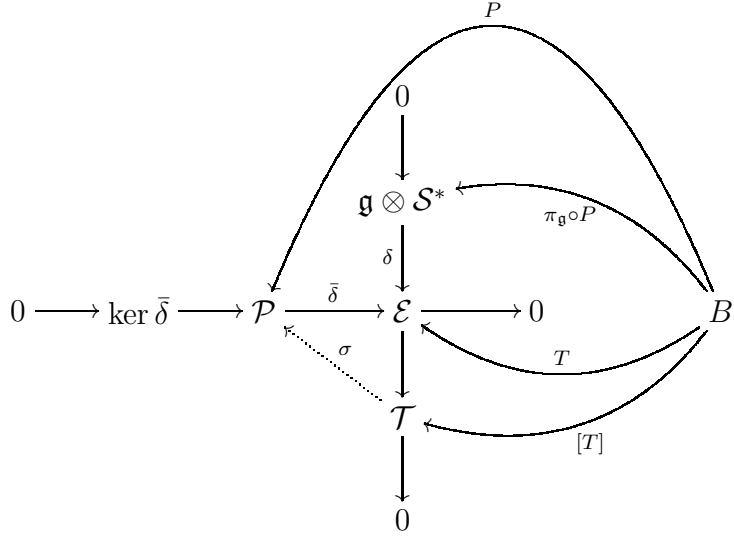

\begin{lemma}
Let $\mathcal{P}_\mathcal{R} = \{ P \in \mathcal{P} : \pi_\mathfrak{g}(P(A)) =
0\ \text{for all $A \in \mathcal{S}$}\} = (\pi_\mathcal{R} \otimes
1)(\mathcal{P})$
Then $\mathcal{P}_\mathcal{R}$ is isomorphic to $\mathcal{Q}_{+}$, so
$\mathcal{P} = \mathcal{Q}_+ \oplus (\mathfrak{g}\otimes
\mathcal{S}^*)$.
\label{PR}
\end{lemma}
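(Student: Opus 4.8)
The plan is to exploit the $Ad(G)$-decomposition $\mathfrak{gl}(n)=\mathfrak{g}\oplus\mathcal{R}$ established above, which induces a splitting $\mathfrak{gl}(n)\otimes\mathcal{S}^*=(\mathfrak{g}\otimes\mathcal{S}^*)\oplus(\mathcal{R}\otimes\mathcal{S}^*)$. Since we already know $\mathfrak{g}\otimes\mathcal{S}^*\subset\mathcal{P}$, the real content of the lemma concerns only the $\mathcal{R}$-valued part, so I would first restrict attention entirely to maps $P\in\mathcal{R}\otimes\mathcal{S}^*$, that is, to linear maps $P:\mathcal{S}\to\mathcal{R}$.

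The key step is a single trace computation. Using the isomorphism $\mathcal{S}\cong\mathcal{R}$, $A\mapsto\Phi A$ from Figure~\ref{reps}, I would write an arbitrary $P\in\mathcal{R}\otimes\mathcal{S}^*$ as $P(A)=\Phi Q(A)$ for a unique $Q\in\mathrm{End}(\mathcal{S})$, namely $Q=\Phi^{-1}\circ P$. Substituting $P(A)^{\top}=Q(A)\Phi$ into the defining formula for $\bar\delta$ gives
\[
\bar\delta(P)(A,B)=Q(A)\Phi B+B\Phi Q(A)-Q(B)\Phi A-A\Phi Q(B),
\]
and then I would compute its $\Phi$-trace. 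By cyclicity of the trace and the definition $\pair{X,Y}=\frac1n\tr(\Phi X\Phi Y)$, the four terms collapse to $\tr_\Phi(\bar\delta(P)(A,B))=2n\bigl(\pair{Q(A),B}-\pair{Q(B),A}\bigr)$. Because $\pair{\cdot,\cdot}$ is symmetric, this vanishes for all $A,B$ exactly when $Q$ is self-adjoint for $\pair{\cdot,\cdot}$, i.e. when $Q\in\mathcal{Q}_{+}$. Hence $P\in\mathcal{P}$ if and only if $Q\in\mathcal{Q}_{+}$, and $P\mapsto\Phi^{-1}\circ P$ is the desired linear isomorphism $\mathcal{P}\cap(\mathcal{R}\otimes\mathcal{S}^*)\cong\mathcal{Q}_{+}$ (injective since $\Phi$ is invertible, surjective since any $Q\in\mathcal{Q}_{+}$ lifts to $P=\Phi\circ Q$).

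To finish, I would deduce the direct-sum decomposition and the equivalence of the two descriptions of $\mathcal{P}_\mathcal{R}$ by a projection argument. Given any $P\in\mathcal{P}$, its $\mathfrak{g}$-component $(\pi_\mathfrak{g}\otimes 1)(P)$ lies in $\mathfrak{g}\otimes\mathcal{S}^*\subset\mathcal{P}$, so the $\mathcal{R}$-component $(\pi_\mathcal{R}\otimes 1)(P)=P-(\pi_\mathfrak{g}\otimes 1)(P)$ again lies in $\mathcal{P}$, hence in $\mathcal{P}\cap(\mathcal{R}\otimes\mathcal{S}^*)$. This shows simultaneously that $(\pi_\mathcal{R}\otimes 1)(\mathcal{P})=\mathcal{P}\cap(\mathcal{R}\otimes\mathcal{S}^*)$, so that the two definitions of $\mathcal{P}_\mathcal{R}$ agree, and that $\mathcal{P}=\mathcal{P}_\mathcal{R}+(\mathfrak{g}\otimes\mathcal{S}^*)$; the sum is direct because $\mathfrak{g}\cap\mathcal{R}=0$ forces $(\mathfrak{g}\otimes\mathcal{S}^*)\cap(\mathcal{R}\otimes\mathcal{S}^*)=0$. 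Combined with the isomorphism of the previous paragraph, this yields $\mathcal{P}=\mathcal{Q}_{+}\oplus(\mathfrak{g}\otimes\mathcal{S}^*)$.

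The main obstacle is the trace computation in the middle step: one must verify that the $\Phi$-trace of $\bar\delta(P)$ genuinely collapses to the self-adjointness defect $\pair{Q(A),B}-\pair{Q(B),A}$ of $Q$. Everything hinges on the repeated use of cyclicity together with the symmetry of $A$, $B$, $Q(\cdot)$, and $\Phi$, and this bookkeeping is both where an error would most likely creep in and the precise point at which the particular pairing $\pair{\cdot,\cdot}$, rather than a naive trace, is forced upon the argument.
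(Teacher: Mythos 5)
Your proof is correct and follows essentially the same route as the paper: both hinge on the map $P \mapsto \Phi^{-1}\circ\pi_\mathcal{R}\circ P$ and the identity $\tr_\Phi(\bar\delta(P)(A,B)) = \tr(P(A)\Phi B) - \tr(P(B)\Phi A)$, which converts membership in $\mathcal{P}$ into self-adjointness of the induced endomorphism of $\mathcal{S}$ with respect to $\pair{\cdot,\cdot}$. The only (harmless) differences are a dropped overall factor of $\frac12$ in your expression for $\bar\delta(P)(A,B)$ and your replacement of the paper's dimension count for the kernel by a direct projection argument using $\mathfrak{g}\otimes\mathcal{S}^*\subset\mathcal{P}$, which is if anything slightly cleaner.
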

\begin{proof}
It suffices to define a map $e$ such that the sequence 
\begin{equation}
0 \to \mathfrak{g}\otimes \mathcal{S}^* \to \mathcal{P} \overset{e}{\to}
\mathcal{Q}_{+} \to 0
\label{Eseq}
\end{equation}
is exact. 
For any $P:\mathcal{S} \to \mathfrak{gl}(n)$, define the map
$e(P):\mathcal{S} \to \mathcal{S}$ by
$e(P)(A)= \Phi^{-1}\pi_\mathcal{R}(P(A))$. 
Let $E(P)\in \mathcal{S}^* \otimes \mathcal{S}^*$ denote the bilinear pairing
associated to $e(P)$ using the identifications from Section~\ref{infingeom}. So,
\begin{equation}
\begin{split}
E(P)(A,B) &= \pair{e(P)(A),B}\\
&= 
\pair{ \Phi^{-1}\frac12 \left(P(A) + \Phi P(A)^{\top}\Phi^{-1}\right) -
\frac1n \tr(P(A))\Phi^{-1}, B} \\
&=
\frac1{2n}\left(\tr(P(A)\Phi B) + \tr(B \Phi P(A)^{\top})\right) \\
&= \frac1n\tr(P(A)\Phi B).
\end{split}
\label{evaluation}
\end{equation}
Note that $\tr_\Phi(\bar{\delta}(P)(A,B)) = \tr(P(A)\Phi B) -
\tr(P(B)\Phi A)$, so $P \in \mathcal{P}$ if and only if $E(P) \in
\mathcal{S}^* \odot \mathcal{S}^*$, which is true if and only if $e(P) \in
\mathcal{Q}_{+}$.  Moreover, $e(P)=0$ for all $P \in \mathfrak{g}\otimes
\mathcal{S}^*$. 

To prove that $\mathfrak{g}\otimes \mathcal{S}^*$ is the entire kernel of $e$,
it suffices to prove that dimension of $\mathcal{P}$ is $\frac12 m(m+1) + \dim
(\mathfrak{g}\otimes \mathcal{S}^*)$ or equivalently that the co-dimension of $\mathcal{P}$ in
$\mathfrak{gl}(n) \otimes \mathcal{S}^*$ is $\frac12m(m-1)$.
To do this, it suffices to prove that  the map $(\tr_\Phi \otimes 1) \circ
\bar{\delta}: \mathfrak{gl}(n)\otimes \mathcal{S}^* \to \mathbb{R}\otimes
(\mathcal{S}^*\wedge \mathcal{S}^*)$ is a surjection.   

Let $\hat{P}(A) = P(A)\Phi$; this changes the action on the image from $Ad$ to
$Od^*$, which effectively reveals the isomorphism
between $\mathcal{Q}_-$ and $\mathcal{S}^* \wedge \mathcal{S}^*$.
 Any $\hat{P} \in \mathfrak{gl}(n) \otimes \mathcal{S}^*$ may be written as 
$\hat{P}(A)_{ij} = \frac12 \sum_{ab} C_{ij}^{ab} A_{ab}$ such that $C_{ij}^{ab} =
C_{ij}^{ba}$.  Then  
\begin{equation}
\begin{split}
\tr(P(A) \Phi B - P(B) \Phi A) &= \tr(\hat{P}(A)B - \hat{P}(B)A) \\
&= \sum_{ij}\hat{P}(A)_{ij}B_{ij} - \hat{P}(B)_{ij}A_{ij}\\
&= \frac12 \sum_{ijab} C_{ij}^{ab}A_{ab}B_{ij} -
C_{ij}^{ab}B_{ab}A_{ij}\\
&=  \sum_{ijab} C_{ij}^{ab}(A_{ij}\wedge B_{ab}).
\end{split}
\end{equation}
Hence, any element of $\mathcal{S}^* \wedge \mathcal{S}^*$ may be obtained by
choosing appropriate $C_{ij}^{ab}$.  
\end{proof}

\begin{lemma}
The space of essential torsion, $\mathcal{T}$, is isomorphic to
$\mathcal{Q}_{2} = \ker \Pi$.
In particular, identifying $\mathcal{T}' = e^{-1}(\mathcal{Q}_{2})$ as a
co-kernel of $\bar\delta$ allows the
splittings $\mathcal{P}_\mathcal{R} = (\pi_\mathcal{R} \otimes 1)(\ker \bar
\delta) + \mathcal{T}'$ and $\mathcal{P} = (\mathfrak{g}\otimes \mathcal{S}^*)
+ \ker(\bar{\delta}) + \mathcal{T}'$.
\label{fullytraceless}
\end{lemma}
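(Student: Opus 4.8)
The plan is to recognize the essential torsion as a cokernel and then transport the whole computation into the $\mathcal{Q}$-decomposition of Section~\ref{infingeom} via the map $e$ of Lemma~\ref{PR}. By definition $\mathcal{T}=\mathcal{E}/\delta(\mathfrak{g}\otimes\mathcal{S}^*)$, where $\delta=\bar\delta|_{\mathfrak{g}\otimes\mathcal{S}^*}$ and $\mathcal{E}=\bar\delta(\mathcal{P})$. The first step is to note that $\bar\delta$ is a surjection $\mathcal{P}\to\mathcal{E}$ with kernel $\ker\bar\delta\subset\mathcal{P}$, and that the $\bar\delta$-preimage of $\delta(\mathfrak{g}\otimes\mathcal{S}^*)$ is exactly $(\mathfrak{g}\otimes\mathcal{S}^*)+\ker\bar\delta$. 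The elementary isomorphism theorem then gives
\[
\mathcal{T}=\frac{\bar\delta(\mathcal{P})}{\bar\delta(\mathfrak{g}\otimes\mathcal{S}^*)}\cong\frac{\mathcal{P}}{(\mathfrak{g}\otimes\mathcal{S}^*)+\ker\bar\delta}.
\]

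Next I would feed in Lemma~\ref{PR}: the map $e:\mathcal{P}\to\mathcal{Q}_{+}$ is surjective with kernel $\mathfrak{g}\otimes\mathcal{S}^*$, so it descends to an isomorphism $\mathcal{P}/(\mathfrak{g}\otimes\mathcal{S}^*)\cong\mathcal{Q}_{+}$. Under this isomorphism the quotient above becomes $\mathcal{Q}_{+}/e(\ker\bar\delta)$, so the entire lemma reduces to locating the image $e(\ker\bar\delta)$ inside $\mathcal{Q}_{+}=\mathcal{Q}_0+\mathcal{Q}_1+\mathcal{Q}_2$.

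The decisive step, and the only one demanding genuine computation, is to evaluate $e$ on the two families spanning $\ker\bar\delta$ from Lemma~\ref{kerdeltabar}: the maps $A\mapsto c\Phi A$ and the maps $A\mapsto\Phi C\Phi A$ for $C\in\mathcal{S}$. Using $e(P)(A)=\Phi^{-1}\pi_\mathcal{R}(P(A))$ together with the explicit projection $\pi_\mathcal{R}(a)=\tfrac12(a+\Phi a^{\top}\Phi^{-1})-\tfrac1n\tr(a)I$ and the symmetry of $A$ and $\Phi$, I expect $e(A\mapsto c\Phi A)=(A\mapsto cA)$, which is exactly $\mathcal{Q}_0$ — here the $\Phi$-tracelessness of $A\in\mathcal{S}$ is what kills the residual scalar term — and $e(A\mapsto\Phi C\Phi A)=\bigl(A\mapsto\tfrac12(C\Phi A+A\Phi C)-\pair{A,C}\Phi^{-1}\bigr)$, which is exactly the generator of $\mathcal{Q}_1$. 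This establishes $e(\ker\bar\delta)=\mathcal{Q}_0+\mathcal{Q}_1$, whence $\mathcal{T}\cong\mathcal{Q}_{+}/(\mathcal{Q}_0+\mathcal{Q}_1)\cong\mathcal{Q}_2=\ker\Pi$, proving the first assertion.

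For the splittings, set $\mathcal{T}'=e^{-1}(\mathcal{Q}_2)$ inside $\mathcal{P}_\mathcal{R}$, on which $e$ restricts to the isomorphism $\mathcal{P}_\mathcal{R}\cong\mathcal{Q}_{+}$. Since $e=\Phi^{-1}\circ(\pi_\mathcal{R}\otimes 1)$ factors through $\pi_\mathcal{R}\otimes 1$ and $\pi_\mathcal{R}$ is idempotent, $e\bigl((\pi_\mathcal{R}\otimes 1)(\ker\bar\delta)\bigr)=e(\ker\bar\delta)=\mathcal{Q}_0+\mathcal{Q}_1$, so the decomposition $\mathcal{Q}_{+}=(\mathcal{Q}_0+\mathcal{Q}_1)\oplus\mathcal{Q}_2$ pulls back to $\mathcal{P}_\mathcal{R}=(\pi_\mathcal{R}\otimes 1)(\ker\bar\delta)+\mathcal{T}'$. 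Combining with $\mathcal{P}=(\mathfrak{g}\otimes\mathcal{S}^*)\oplus\mathcal{P}_\mathcal{R}$ and observing that $(\pi_\mathcal{R}\otimes 1)(\ker\bar\delta)$ and $\ker\bar\delta$ agree modulo $\mathfrak{g}\otimes\mathcal{S}^*$ — they differ by $(\pi_\mathfrak{g}\otimes 1)(\ker\bar\delta)\subset\mathfrak{g}\otimes\mathcal{S}^*$ — yields $\mathcal{P}=(\mathfrak{g}\otimes\mathcal{S}^*)+\ker\bar\delta+\mathcal{T}'$; a dimension count using $\dim\ker\bar\delta=m+1$, $\ker\delta=0$, and $\dim\mathcal{Q}_2=\tfrac12(m-2)(m+1)$ confirms these sums are in fact direct. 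The only real obstacle is the bookkeeping in the two matrix computations of the decisive step; the rest is formal diagram-chasing built on the module theory already established in Section~\ref{infingeom}.
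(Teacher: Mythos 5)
Your proposal is correct and follows essentially the same route as the paper: the paper's proof runs the identical reduction through the commuting pair of exact sequences relating $\mathcal{P}_\mathcal{R}\overset{\bar\delta}{\to}\mathcal{T}$ and $\mathcal{Q}_+\overset{\Pi}{\to}(\mathcal{S}\oplus\mathbb{R}\Phi^{-1})$ via the isomorphism $e$, and then performs your ``decisive step'' by computing $E(K)(A,B)=c\pair{A,B}+\pair{A,B,C}$ for $K(A)=c\Phi A+\Phi C\Phi A$, which is exactly your evaluation of $e$ on $\ker\bar\delta$ written in the bilinear-form picture rather than the endomorphism picture. Your explicit dimension count at the end is a small addition the paper omits, but the substance is the same.
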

\begin{proof}
Consider the pair of exact sequences
\begin{equation}
\xymatrix{
0\ar[r]&{(\pi_\mathcal{R}\otimes 1)(\ker\bar\delta)}\ar[r]&\mathcal{P}_\mathcal{R}\ar[d]^{e}_{\sim}\ar[r]^{\bar\delta}&\mathcal{T}\ar[r]&0\\
0 & (\mathcal{S}\oplus \Phi^{-1}\mathbb{R})\ar[l]&  \mathcal{Q}_{+}
\ar[l]_-{\Pi} & \mathcal{Q}_{2} \ar[l] & 0. \ar[l]\\
}
\label{diag2}
\end{equation}
Recall that $e \circ (\pi_\mathcal{R} \otimes 1) = e$.
Because both sequences are exact, it suffices to
prove that the map $\Pi \circ e$ is an isomorphism from $\ker
\bar\delta$ to $\mathcal{Q}_{0} + \mathcal{Q}_{1}$.  
Suppose $K \in \ker \bar\delta$, so $K(A) = c\Phi A + \Phi C \Phi A$ for
arbitrary
$c \in \mathbb{R}$ and $C \in \mathcal{S}$.  Then 
\begin{equation}\begin{split}
E(K)(A,B) &=
\pair{ \Phi^{-1}\frac12\left( K(A) + \Phi K(A)^{\top}\Phi^{-1} \right) -
\frac1n\tr(K(A)) \Phi^{-1}, B} \\
&=\frac1{2n}\tr(c \Phi A \Phi B  + \Phi C \Phi A \Phi B +c \Phi A \Phi B  + \Phi
A \Phi C \Phi B)\\ 
&= c\pair{A,B} + \pair{A,B,C}.\\
\end{split}
\end{equation}
\end{proof}

\begin{lemma}[The First Fundamental Lemma]
An embeddable $Od(G)$-structure $\mathcal{B}$ admits a unique function
$P:B \to \mathcal{T}' \cong \mathcal{Q}_2$ and a unique connection $\theta$ such that $B$ has first structure equations:
\[ \mathrm{d}\omega = (\theta + P(\omega))^{\top}\wedge\omega - \omega \wedge (\theta
+P(\omega)). \]
Moreover, $\theta$ decomposes as $\theta = \varphi -\frac12 \lambda I$ for
$-\frac12 \lambda = \frac1n\tr(\theta)$ valued in $\mathbb{R}$ and $\varphi$ valued in
$\mathfrak{so}(n,\Phi)$.
\end{lemma}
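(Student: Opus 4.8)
The plan is to read this statement as the torsion-absorption step of Cartan's method and to assemble it from the three preceding lemmas. I would start from Lemma~\ref{embeddabletorsion}: on an embeddable structure, \emph{any} local pseudo-connection $\theta_0$ comes with a function $P_0:\mathcal{B}\to\mathcal{P}$, unique modulo $\ker\bar\delta$, satisfying $\mathrm{d}\omega=(\theta_0+P_0(\omega))^\top\wedge\omega-\omega\wedge(\theta_0+P_0(\omega))$. The remaining work is to normalize the pair $(\theta_0,P_0)$ using the direct-sum decomposition $\mathcal{P}=(\mathfrak{g}\otimes\mathcal{S}^*)\oplus\ker\bar\delta\oplus\mathcal{T}'$ of Lemma~\ref{fullytraceless}, where $\mathcal{T}'\cong\mathcal{Q}_2$; throughout, the key fact is that the correction $P$ affects the right-hand side only through $\bar\delta(P)(\omega\wedge\omega)=P(\omega)^\top\wedge\omega-\omega\wedge P(\omega)$.

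For existence I would split $P_0=P_\mathfrak{g}+P_{\ker}+P_{\mathcal{T}'}$ along this decomposition and dispose of the first two summands. Since $P_\mathfrak{g}\in\mathfrak{g}\otimes\mathcal{S}^*$ is $\mathfrak{g}$-valued and semibasic, $\theta:=\theta_0+P_\mathfrak{g}(\omega)$ is again a $\mathfrak{g}$-valued pseudo-connection, which absorbs $P_\mathfrak{g}$ into the connection. Since $\bar\delta$ annihilates $\ker\bar\delta$, the summand $P_{\ker}$ makes no contribution to $\mathrm{d}\omega$ and may simply be discarded. What survives is $P:=P_{\mathcal{T}'}\in\mathcal{T}'\cong\mathcal{Q}_2$, and the structure equation now has exactly the asserted form. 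Globality is automatic: by Lemma~\ref{kerdeltabar} the first prolongation vanishes, $\mathfrak{g}^{(1)}=\ker\delta=0$, so once the complement $\mathcal{T}'$ is fixed there is a unique global connection realizing it and the local normalizations agree on overlaps.

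Uniqueness is the heart of the argument. Suppose $(\theta,P)$ and $(\theta',P')$ both satisfy the equation with $P,P'\in\mathcal{T}'$. Two pseudo-connections restrict to the same Maurer--Cartan form on fibers, so $\Delta\theta:=\theta-\theta'$ is semibasic and $\mathfrak{g}$-valued; write $\Delta\theta=Q(\omega)$ with $Q\in\mathfrak{g}\otimes\mathcal{S}^*$, and set $\Delta P:=P-P'\in\mathcal{T}'$. Subtracting the two structure equations gives $\bar\delta(Q+\Delta P)(\omega\wedge\omega)=0$; because the entries of each row of $\omega$ are independent one-forms---the mechanism already exploited in Lemma~\ref{kerDelta} and Lemma~\ref{kerdeltabar}---evaluation on $\omega\wedge\omega$ is injective, so $Q+\Delta P\in\ker\bar\delta$. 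But $Q+\Delta P$ lies in $(\mathfrak{g}\otimes\mathcal{S}^*)\oplus\mathcal{T}'$, which meets $\ker\bar\delta$ only in $0$ by the directness of the decomposition; hence $Q+\Delta P=0$, and directness again forces $Q=0$ and $\Delta P=0$, i.e.\ $\theta=\theta'$ and $P=P'$. I expect the only delicate point to be the bookkeeping that keeps $\Delta\theta$ and $\Delta P$ in their correct summands; conceptually everything reduces to the vanishing $\mathfrak{g}^{(1)}=\ker\bar\delta\cap(\mathfrak{g}\otimes\mathcal{S}^*)=0$ packaged through the direct sum.

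Finally, the decomposition of $\theta$ is purely algebraic. We have $\mathfrak{g}=\mathfrak{co}(n,\Phi)=\mathfrak{so}(n,\Phi)\oplus\mathbb{R}I_n$, and every $X\in\mathfrak{so}(n,\Phi)$ satisfies $X\Phi+\Phi X^\top=0$, hence $\tr X=0$. I would therefore apply the projections $\pi_{\mathfrak{so}(n,\Phi)}$ and $\pi_{\mathbb{R}I}$ of Section~\ref{infingeom} to the $\mathfrak{g}$-valued connection $\theta$, setting $\varphi=\pi_{\mathfrak{so}(n,\Phi)}(\theta)$ and reading off the scalar part as $\frac1n\tr(\theta)I$. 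Defining $-\tfrac12\lambda=\frac1n\tr(\theta)$ gives $\theta=\varphi-\tfrac12\lambda I$ with $\varphi$ valued in $\mathfrak{so}(n,\Phi)$ and $\lambda$ real, completing the statement.
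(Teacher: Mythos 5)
Your proposal is correct and follows essentially the same route as the paper: Lemma~\ref{embeddabletorsion} supplies $\hat P$ up to $\ker\bar\delta$, and the splitting $\mathcal{P}=(\mathfrak{g}\otimes\mathcal{S}^*)\oplus\ker\bar\delta\oplus\mathcal{T}'$ from Lemmas~\ref{PR} and \ref{fullytraceless} is then used to absorb the $\mathfrak{g}$-part into the connection and isolate $P\in\mathcal{T}'\cong\mathcal{Q}_2$, with the final decomposition $\theta=\varphi-\tfrac12\lambda I$ being the algebraic splitting of $\mathfrak{g}$. The only difference is expository: you spell out the uniqueness by subtracting two normalized structure equations and invoking $\bigl((\mathfrak{g}\otimes\mathcal{S}^*)\oplus\mathcal{T}'\bigr)\cap\ker\bar\delta=0$, whereas the paper leaves this implicit in the uniqueness clauses of Lemmas~\ref{kerdeltabar} and \ref{fullytraceless}.
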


\begin{proof}
Fix an embeddable $Od(G)$-structure $\mathcal{B}$ with (local) pseudo-connection
$\hat{\theta}$ and apparent torsion $T$ and structure equation
\begin{equation}
\mathrm{d}\omega = \theta^{\top} \wedge \omega - \omega \wedge \theta + 
T(\omega\wedge\omega).
\end{equation}
The torsion $T$ may be written as $\bar\delta(\hat{P})$ for some $\hat{P}:B
\to \mathcal{P}$ that is unique up to the addition of any $K:\mathcal{B} \to
\ker\bar\delta$.
Lemma~\ref{kerdeltabar}, implies that, among all such $\hat{P}$, there is a unique one such that
$\Pi(E(\pi_\mathcal{R} \circ \hat{P})) = 0$.

Set $\theta = \hat{\theta} + \pi_\mathfrak{g}(\hat{P}(\omega))$, and write
$P(\omega)  = \pi_\mathcal{R}(\hat{P}(\omega))$.  The decomposition of $\theta$ follows from the definition of the scaling
action in Appendix~\ref{neg}. The given structure equation now holds.
Because $\Pi(e(\hat{P}_\mathcal{R}))=0$, Diagram~(\ref{Eseq}) shows that
$\hat{P}_\mathcal{S} = e^{-1}(Y)$ for a unique $Y \in \mathcal{Q}_{2}= \ker
\Pi$. Therefore, $P \in \mathcal{T}'$. 
\end{proof}

Henceforth, the words ``connection'' and ``torsion 1-form'' refer to the
symbols $\theta=\varphi- \frac12\lambda I$ and $\tau=P(\omega)$ normalized in this way.

\subsection{Embeddable Curvature and the Structure Theorem} 
We now consider similar restrictions on the curvature,
\[
R(\omega\wedge\omega) = \mathrm{d}\theta+\theta\wedge\theta =
\mathrm{d}\varphi + \varphi \wedge\varphi - \frac12\mathrm{d}\lambda I.
\]
\emph{A priori}, the
curvature $R$ may live in $\mathfrak{g}\otimes(\mathcal{S}^* \wedge
\mathcal{S}^*)$; however, the condition that $\mathcal{B}$ is embeddable imposes
conditions determining which submodules may actually appear.
\begin{lemma}[The Second Fundamental Lemma]
Let $\mathcal{B}$ be an embeddable $Od(G)$-structure with connection
1-form $\theta$ and torsion 1-form $\tau$.  Then there exists a function $C$
valued in $(\mathbb{R}\Phi + \mathcal{S}^*) \otimes
\mathcal{S}^*$ such that 
\begin{equation}
R(\omega\wedge\omega) + \tau\wedge\tau + \nabla(\tau) = - C(\omega) \wedge \omega.
\label{str2}
\end{equation}
\end{lemma}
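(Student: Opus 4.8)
The plan is to exploit embeddability directly: since $\mathcal{B}$ embeds in $CSp(N)^o \to \Lambda^o$, the pullbacks of the Maurer--Cartan blocks $\alpha,\beta,\gamma$ of $\mathfrak{sp}(n)$ satisfy the flat structure equations~(\ref{mc}), and I will read off the curvature equation by differentiating the normalized first structure equation and comparing with the second structure equation $\mathrm{d}\beta + \beta\wedge\beta + \gamma\wedge\alpha = 0$.

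First I would fix a local bundle embedding $h$ and recall from the proof of Lemma~\ref{embeddabletorsion} that $\omega_{jk} = h^*(\alpha_{jk})$ and that $h^*(\beta)$ decomposes into its vertical and semi-basic parts. After the normalization of the First Fundamental Lemma, the semi-basic part is exactly the torsion $\tau = P(\omega)$ and the vertical part is the connection $\theta = \varphi - \tfrac12\lambda I$; that is, $h^*(\beta) = \theta + \tau$ up to the $\ker\bar\delta$ ambiguity. The key computational step is then to pull back the second $\mathfrak{sp}(n)$ structure equation,
\begin{equation}
0 = \mathrm{d}(h^*\beta) + h^*\beta \wedge h^*\beta + h^*\gamma \wedge h^*\alpha,
\end{equation}
and substitute $h^*\beta = \theta + \tau$. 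Differentiating gives $\mathrm{d}\theta + \mathrm{d}\tau$, and since $\mathrm{d}\tau = \nabla(\tau) - (\text{connection terms})$ by the definition $\nabla(\tau) = \mathrm{d}\tau + \rho_\theta(\tau)$, the connection-bracket contributions from $(\theta+\tau)\wedge(\theta+\tau)$ should reorganize precisely into $R(\omega\wedge\omega) = \mathrm{d}\theta + \theta\wedge\theta$ together with $\tau\wedge\tau$ and the covariant-derivative correction. What remains on the right is the single term $h^*\gamma \wedge h^*\alpha = h^*\gamma \wedge \omega$; because $h^*\gamma$ is $\gamma = \gamma^\top$-valued and may have both semi-basic and vertical parts, I would write its semi-basic part as $C(\omega)$ for some $C$ valued in symmetric-matrix-valued $1$-forms, i.e. in $(\mathbb{R}\Phi + \mathcal{S}^*)\otimes\mathcal{S}^*$, using the decomposition $V\odot V = \mathbb{R}\Phi^{-1} + \mathcal{S}$.

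\emph{The main obstacle} I anticipate is bookkeeping the matrix-transpose arithmetic and the representation $od(\mathfrak{g})$ of Equation~(\ref{odaction}) correctly, so that the $\theta$-brackets arising from $h^*\beta\wedge h^*\beta$ combine with $\mathrm{d}\tau$ to produce exactly $\nabla(\tau)$ in the sign convention of the $Od(G)$-action rather than the standard one — this is the same apparent sign subtlety flagged in the discussion preceding Lemma~\ref{kerDelta}. A secondary point is verifying that the vertical part of $h^*\gamma$ does not survive: since $R$, $\tau\wedge\tau$, and $\nabla(\tau)$ are all already accounted for as genuine curvature and torsion-square terms, the residual $h^*\gamma\wedge\omega$ must be purely semi-basic on $\mathcal{B}$, which forces $C$ to be semi-basic and lands it in the claimed module. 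Concluding, I would remark that $C$ inherits the full symmetry $C_{ij}^{kl}=C_{ij}^{lk}$ from $\gamma=\gamma^\top$ and the symmetry of $\omega$, pinning down its range as $(\mathbb{R}\Phi + \mathcal{S}^*)\otimes\mathcal{S}^*$ and completing the identification.
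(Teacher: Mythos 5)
Your proposal is correct and follows essentially the same route as the paper: pull back the second $\mathfrak{sp}(n)$ structure equation along the embedding with $h^*(\beta)=\theta+\tau$, observe that $R(\omega\wedge\omega)+\tau\wedge\tau+\nabla(\tau)$ is semi-basic so that $h^*(\gamma)\wedge\omega$ (hence $h^*(\gamma)$ itself) must be semi-basic, and then place $C$ in $(\mathbb{R}\Phi+\mathcal{S}^*)\otimes\mathcal{S}^*$ using $\gamma=\gamma^{\top}$ and the $Od^*(G)$-module decomposition of the symmetric matrices. The sign bookkeeping you flag is exactly the point the paper handles by noting that the appropriate action on $h^*(\gamma)$ is $Od^*(G)$ rather than $Od(G)$, so no gap remains.
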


\begin{proof}
If $\mathcal{B}$ is embeddable, then there exists a (local) map $h : B \to
Sp(n)$ such that $h^*(\alpha) = \omega$, $h^*(\beta) = \theta + \tau$, and 
$h^*(\mathrm{d}\beta + \beta\wedge\beta + \gamma \wedge \alpha) = 0$.

Note that $h^*(\mathrm{d}\beta + \beta\wedge\beta)$ is a
$\mathfrak{gl}(n)$-valued 2-form on $\mathcal{B}$, but it decomposes into the
two terms
$R(\omega\wedge\omega) + \tau\wedge\tau$, which are semi-basic and valued in
$\mathfrak{g}$, and $\nabla(\tau)$, which is semi-basic and valued in
$\mathcal{R}$.  Thus, $h^*(\gamma) \wedge \omega$ must be semi-basic, so
$h^*(\gamma) \equiv 0 \mod \{ \omega_{jk} \}$. 

Since $\gamma=\gamma^{\top}$, $h^*(\gamma)$ must take values in
the symmetric matrices, which decompose into either $\mathbb{R}\Phi +
\mathcal{S}^*$ or $\mathbb{R}\Phi^{-1} + \mathcal{S}$ depending on the action
of $G$.  The $Ad(G)$ action on $h^*(\mathrm{d}\beta+\beta\wedge\beta)$ shows that the
appropriate action on $h^*(\gamma)$ is $Od^*(G)$. Thus $h^*(\gamma)$ may
take values in the sum of the two irreducible $Od^*(G)$-modules $\mathcal{S}^*$ and
$\mathbb{R} \Phi$.
\end{proof}

\begin{thm}[The Structure Theorem]
Suppose $\mathcal{B} \to M$ is an embeddable $Od(G)$-structure.  Then there
are unique $G$-equivariant functions 
$P:\mathcal{B} \to \mathcal{T}'$, 
$Q_{12}:\mathcal{B} \to (\mathcal{Q}_{1} + \mathcal{Q}_{2})$, 
$Q_-:\mathcal{B} \to \mathcal{Q}_{-}$, 
$r:\mathcal{B} \to \mathbb{R}$, and 
$s: \mathcal{B} \to \mathcal{S}^*$
such that $\mathcal{B}$ has structure equations
\begin{equation}
\begin{split}
\mathrm{d}\omega &= - \lambda I \wedge \omega + (\varphi +
P(\omega))^{\top}\wedge\omega - \omega \wedge (\varphi +P(\omega)),\\
&\phantom{=} \text{action by $Od$} \\
\mathrm{d}(P(\omega)) &= - \theta^{\top} \wedge P(\omega) - P(\omega) \wedge
\theta - s(\omega)\Phi\wedge\omega -
\pi_{\mathcal{R}}\left([Q_{12}(\omega) + Q_-(\omega)]\wedge\omega\right),\\
&\phantom{=} \text{action by $Ad$}\\
\mathrm{d}\varphi &= - \varphi \wedge \varphi  - P(\omega) \wedge P(\omega) +
\textstyle{\frac1n}\tr(P(\omega)\wedge P(\omega))I  - r \Phi \omega \wedge \Phi \omega  \\
&\phantom{= }
- \pi_{\mathfrak{so}(n,\Phi)}\left( [Q_{12}(\omega) + Q_{-}(\omega)]\wedge\omega\right)\\
&\phantom{=} \text{action by $Ad$}\\
\mathrm{d}\lambda &= 2 \tr(P(\omega)\wedge P(\omega)) + 2
\pi_{\mathbb{R}I}\left(Q_-(\omega) \wedge \omega\right)\\
&\phantom{=} \text{trivial action}\\
\end{split}
\label{eqnembedstr}
\end{equation}
In the general case, all of the projections are injections on the shown
representations.  In particular, $\nabla(\theta)=R(\omega\wedge\omega)$
depends only on $P$, $\nabla(P)$, and a single scalar curvature, $r$.
\label{thmembedstr}
\end{thm}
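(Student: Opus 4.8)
The plan is to derive the three remaining structure equations by pulling back the \emph{second} Maurer--Cartan equation of $Sp(n)$ from \eqref{mc} and sorting it according to the $Ad(G)$-module decomposition $\mathfrak{gl}(n)=\mathbb{R}I+\mathfrak{so}(n,\Phi)+\mathcal{R}$. First I would invoke embeddability (Lemma~\ref{embeddable}) to fix a local map $h:\mathcal{B}\to Sp(n)$ with $h^*(\alpha)=\omega$ and $h^*(\beta)=\theta+\tau$, where $\theta=\varphi-\tfrac{1}{2}\lambda I$ and $\tau=P(\omega)$ are the connection and torsion already normalized by the First Fundamental Lemma. The Second Fundamental Lemma then guarantees that $h^*(\gamma)$ is semi-basic and $(\mathbb{R}\Phi+\mathcal{S}^*)$-valued, so I may write $h^*(\gamma)=C(\omega)$ for a unique $C:\mathcal{B}\to(\mathbb{R}\Phi+\mathcal{S}^*)\otimes\mathcal{S}^*$ and read the identity \eqref{str2} as $R(\omega\wedge\omega)+\tau\wedge\tau+\nabla(\tau)=-C(\omega)\wedge\omega$.

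Next I would name the irreducible pieces of $C$. Its $\mathbb{R}\Phi\otimes\mathcal{S}^*\cong\mathcal{S}^*$ summand is the covector $s$; its $\mathcal{S}^*\otimes\mathcal{S}^*$ summand splits, as in Section~\ref{infingeom}, into $\mathcal{Q}_-+\mathcal{Q}_0+\mathcal{Q}_1+\mathcal{Q}_2$, contributing the scalar $r$ (the $\mathcal{Q}_0\cong\mathbb{R}$ part), the self-adjoint remainder $Q_{12}$ (the $\mathcal{Q}_1+\mathcal{Q}_2$ part), and the anti-self-adjoint $Q_-$ (the $\mathcal{Q}_-$ part). Substituting these into $-C(\omega)\wedge\omega$ and applying the three $Ad(G)$-projections $\pi_{\mathbb{R}I}$, $\pi_{\mathfrak{so}(n,\Phi)}$, $\pi_\mathcal{R}$ should split the displayed identity into exactly the $\mathrm{d}\lambda$, $\mathrm{d}\varphi$, and $\mathrm{d}(P(\omega))$ equations; the $\tfrac{1}{n}\tr(\cdot)I$ corrections are simply the tracefree adjustments built into $\pi_{\mathfrak{so}(n,\Phi)}$ and $\pi_\mathcal{R}$, while the $r\,\Phi\omega\wedge\Phi\omega$ and $s(\omega)\Phi\wedge\omega$ terms are the images of the $\mathcal{Q}_0$ and $\mathbb{R}\Phi$ pieces. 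Uniqueness and $G$-equivariance of $P,Q_{12},Q_-,r,s$ are then automatic: $\theta$ and $P$ are already unique by the First Fundamental Lemma, $C$ is determined by $R$ through the embedding, and every projection used is $G$-equivariant.

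The hard part will be the representation theory packaged in the phrase ``all of the projections are injections on the shown representations.'' Concretely I must verify that the maps $Q\mapsto\pi_\bullet(Q(\omega)\wedge\omega)$ have no kernel on each $\mathcal{Q}_i$ for generic $n$, so that the components of $C$ are genuine invariants rather than artifacts of the wedge; small $n$ can produce coincidences, which is precisely why the statement is hedged to the general case. Granting this, the final assertion follows cleanly: the $\mathrm{d}(P(\omega))$ equation exhibits $\nabla(\tau)=\nabla(P)(\omega)$ as $-s(\omega)\Phi\wedge\omega-\pi_\mathcal{R}([Q_{12}(\omega)+Q_-(\omega)]\wedge\omega)$, so injectivity of the $\mathcal{R}$-projection on $\mathcal{Q}_1+\mathcal{Q}_2+\mathcal{Q}_-$ recovers $s$, $Q_{12}$, and $Q_-$ from $\nabla(P)$. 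Since the curvature $\nabla(\theta)=R(\omega\wedge\omega)$ is assembled from $P\wedge P$, the single scalar term in $r$, and the projections $\pi_{\mathfrak{so}(n,\Phi)}$ and $\pi_{\mathbb{R}I}$ of $[Q_{12}(\omega)+Q_-(\omega)]\wedge\omega$, and those last projections are now functions of $\nabla(P)$, the curvature depends only on $P$, $\nabla(P)$, and $r$, as claimed.
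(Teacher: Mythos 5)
Your proposal follows the paper's proof essentially step for step: pull back the remaining Maurer--Cartan identity via the Second Fundamental Lemma, decompose $C$ into the pieces $s$, $r$, $Q_{12}$, $Q_-$, apply the three $Ad(G)$-projections $\pi_{\mathbb{R}I}$, $\pi_{\mathfrak{so}(n,\Phi)}$, $\pi_{\mathcal{R}}$ to Equation~(\ref{str2}), and reduce the remaining work to the kernel computations for $\pi_\bullet\circ\Delta$, which is exactly the content of the paper's Lemmas~\ref{dbb0}, \ref{dbb1}, and \ref{dbb2}. The one point to sharpen is that those kernel lemmas do double duty: besides giving injectivity on the ``shown representations,'' they are what justify the word ``exactly'' in your splitting --- i.e.\ why $r$ drops out of the $\mathrm{d}(P(\omega))$ equation, $s$ out of the $\mathrm{d}\varphi$ equation, and everything but $\mathcal{Q}_-$ out of the $\mathrm{d}\lambda$ equation --- but this is the same computation you have already flagged as the hard part.
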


\textbf{Note!}
Although the structure equations in Theorem~\ref{thmembedstr} are true for any
non-degenerate $\Phi$, their precise formulation depends on $\Phi$.
The group $Od(G)$ depends on the initial choice of $\Phi$.  This group
determines the ``shape'' of the matrices $\theta$ and $\omega$ as well as the
projections that define the components of $P$, $Q_{12}$,
$Q_-$, $r$, and $s$.  However, by the law of inertia, changing $\Phi$ for
another non-degenerate symmetric bilinear form of the same signature amounts
only to re-indexing these equations.  Under such a change, the spaces of
invariants will be represented by different (but isomorphic) submodules of
$\mathfrak{gl}(n) \otimes \mathfrak{gl}(n)$.

\begin{proof}
Consider the second-order embeddable skewing map $\Delta:
(\mathcal{S}^* \oplus \mathbb{R}\Phi) \otimes \mathcal{S}^* \to
\mathfrak{gl}(n) \otimes (\mathcal{S}^*\wedge\mathcal{S}^*)$ defined by
$\Delta(C)(\omega\wedge\omega) = C(\omega)\wedge\omega$.   Aside from the
restricted domain, this is the same as $\bar\Delta$ that appears in
Lemma~\ref{kerDelta}, so it is injective.  
By the identifications in Section~\ref{infingeom}, any
$C$ decomposes as $C(\omega) = s(\omega)\Phi + r\Phi \omega \Phi +
Q_{1}(\omega) + Q_{2}(\omega) + Q_{-}(\omega)$ according to  
${(\mathbb{R}\Phi + \mathcal{S}^*)} \otimes \mathcal{S}^* = 
\mathbb{R}\Phi \otimes \mathcal{S}^* + \mathcal{Q}_{0} + \mathcal{Q}_{1} +
\mathcal{Q}_{2} + \mathcal{Q}_{-}$.
Thus, it is known that this space provides all the second-order
invariants of $\mathcal{B}$.  The only question is where the various
components appear in the structure equations.

Equation~(\ref{str2}) is an equality of matrices in the $Ad(G)$-module
$\mathfrak{gl}(n)$, so it can be interpreted as the three distinct equations
by projecting onto the $\mathcal{R}$, $\mathbb{R}I$, and
$\mathfrak{so}(n,\Phi)$ submodules.   In particular, the second-order invariants of $\mathcal{B}$ will appear in the
equations 
\begin{equation}
\begin{split}
\pi_\mathcal{R}(\Delta(C)(\omega\wedge\omega)) &= -\nabla(\tau)\\
\pi_{\mathbb{R}I}(\Delta(C)(\omega\wedge\omega)) &= \textstyle{\frac12} \mathrm{d}\lambda
- \textstyle{\frac1n}\tr( \tau \wedge\tau )\\
\pi_{\mathfrak{so}(n,\Phi)}(\Delta(C)(\omega\wedge\omega)) &=
-\mathrm{d}\varphi - \varphi\wedge\varphi - \tau \wedge\tau  +
\textstyle{\frac1n} \tr(\tau\wedge \tau).
\label{g}
\end{split}
\end{equation}

Suppose $V \subset \mathfrak{gl}(n) \otimes (\mathcal{S}^* \wedge
\mathcal{S}^*)$ is an irreducible component of the image of
$\Delta$.  If $(\pi_\mathcal{R}\otimes 1) (V) \neq 0$ and
$(\pi_\mathfrak{g}\otimes 1)(V) \neq 0$, then there is an isomorphism between
these two images.  In this case, the irreducible component of
$R(\omega\wedge\omega) + \tau\wedge\tau$ appearing as
$(\pi_\mathfrak{g}\otimes 1)(V)$ may be expressed as a multiple of the
irreducible component of $\nabla(\tau)$ that appears as $(\pi_\mathcal{R}
\otimes 1)(V)$.  If $(\pi_\mathcal{R}\otimes 1)(V)=0$, then
$V=(\pi_\mathfrak{g}\otimes 1)(V)$, so $V$ is an irreducible component of
$R(\omega\wedge\omega) + \tau\wedge\tau$ that is independent of
$\nabla(\tau)$.  Finally, if $(\pi_\mathfrak{g}\otimes 1)(V)=0$, then
$V=(\pi_\mathcal{R} \otimes 1)(V)$, so $V$ is an irreducible component of
$\nabla(\tau)$ that is independent of $R(\omega\wedge\omega) +
\tau\wedge\tau$.   Similarly, the image on $\mathfrak{g}$ can be projected onto the
$\mathbb{R}I$ and $\mathfrak{so}(n,\Phi)$ submodules.
Thus, the form of Equation~(\ref{eqnembedstr}) relies only
on the decomposition of the images of the projections of $\Delta(C)$.
The proof is completed by Lemmas~\ref{dbb0}, \ref{dbb1}, and \ref{dbb2},
below. 
\end{proof}

Note that $r$ and $s$ take values in irreducible $(Od^*(G)\otimes Od^*(G))$-modules, but $P$
and $Q$ are not irreducible for general $n$.
To see the syzygies of the invariants $P$, $Q$, $r$, and $s$, it is necessary
to differentiate once more.

\begin{thm}[The Structure Theorem, cont'd]
In the setting of Theorem~\ref{thmembedstr}, the following equations also hold 
for $\tau = P(\omega)$ and $\rho = r\Phi \wedge \Phi + Q_{12}(\omega) + Q_{-}(\omega)$.
\begin{equation}
\begin{split}
\mathrm{d}(s(\omega)) &=   \lambda \wedge s(\omega) -2 \tr(\Phi^{-1} \rho\wedge P(\omega)), \\
\nabla(\rho) &= s(\omega) \wedge \left( \varphi \Phi + \Phi \varphi^{\top} +
\tau \Phi + \Phi\tau^{\top}\right) - \tau \wedge \rho + \rho \wedge
\tau^{\top} + \frac2n \tr(\Phi^{-1} \rho \wedge \tau)\Phi.\\
&\phantom{=} \text{action by $Od^*$}
\end{split}
\end{equation}
\label{thmembedstr2}
\end{thm}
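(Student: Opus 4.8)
The plan is to read both identities off the one Maurer--Cartan block of $Sp(n)$ that Theorem~\ref{thmembedstr} left untouched. The proof of the first structure theorem used the pullbacks of the $\mathrm{d}\alpha$ and $\mathrm{d}\beta$ blocks of Equation~(\ref{mc}) through the embedding $h$ (the $\mathrm{d}\alpha$ block gave $\mathrm{d}\omega$, and the $\mathcal{R}$-, $\mathfrak{so}(n,\Phi)$-, and $\mathbb{R}I$-parts of the $\mathrm{d}\beta$ block gave the equations for $\mathrm{d}(P(\omega))$, $\mathrm{d}\varphi$, and $\mathrm{d}\lambda$). What remains is the $\mathrm{d}\gamma$ block, $\mathrm{d}\gamma = -\beta\wedge\gamma + \gamma\wedge\beta^{\top}$. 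By the Second Fundamental Lemma, $h^*(\gamma)$ is semi-basic, symmetric, and valued in the two irreducible $Od^*(G)$-summands of the symmetric matrices, so I would write $h^*(\gamma) = s(\omega)\Phi + \rho$, with $s(\omega)\Phi$ its $\mathbb{R}\Phi$-part and $\rho$ (carrying the invariants $r$, $Q_{12}$, $Q_-$) its $\mathcal{S}^*$-part. Using $h^*(\beta) = \theta + \tau = \varphi - \frac12\lambda I + \tau$, the pullback becomes the single symmetric-matrix-valued equation
\[
\mathrm{d}\bigl(s(\omega)\Phi + \rho\bigr) = -(\theta+\tau)\wedge\bigl(s(\omega)\Phi+\rho\bigr) + \bigl(s(\omega)\Phi+\rho\bigr)\wedge(\theta+\tau)^{\top}.
\]

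Next I would expand the right-hand side and reduce it with the two algebraic facts that control everything. Since $\varphi \in \mathfrak{so}(n,\Phi)$, one has $\varphi\Phi + \Phi\varphi^{\top} = 0$; this kills the $s(\omega)$-connection terms outright (so the bracket $\varphi\Phi + \Phi\varphi^{\top}$ printed in the $\nabla(\rho)$ equation is a covariant placeholder that vanishes) and recombines the $\varphi\wedge\rho$ terms into the $Od^*$-covariant derivative of $\rho$. Since $\tau = P(\omega)$ is valued in $\mathcal{R}$, one has $\Phi\tau^{\top} = \tau\Phi$, which collapses $s(\omega)\wedge(\tau\Phi + \Phi\tau^{\top})$ to the stated form. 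The remaining care is in the scalar form $\lambda$, whose two occurrences combine by centrality and whose contribution carries the conformal weight of the $\gamma$-slot.

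I would then split the resulting symmetric-matrix equation along $\mathbb{R}\Phi \oplus \mathcal{S}^*$. Because $\rho$ and $s(\omega)\Phi$ take values in fixed subspaces, $\mathrm{d}\rho$ is again $\mathcal{S}^*$-valued and $\mathrm{d}(s(\omega))\Phi$ is again $\mathbb{R}\Phi$-valued, so the two projections are clean. Applying $\pi_{\mathbb{R}\Phi}(\,\cdot\,) = \frac1n\tr(\Phi^{-1}\,\cdot\,)\Phi$ isolates the coefficient of $\Phi$ and produces the first equation, the $\lambda\wedge s(\omega)$ term coming from $\pi_{\mathbb{R}\Phi}$ of $\lambda\wedge s(\omega)\Phi$ and the trace term $-2\tr(\Phi^{-1}\rho\wedge P(\omega))$ coming from $\pi_{\mathbb{R}\Phi}$ of the quadratic pieces $-\tau\wedge\rho + \rho\wedge\tau^{\top}$, evaluated with the graded cyclicity of trace and $\tau^{\top} = \Phi^{-1}\tau\Phi$. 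The complementary $\mathcal{S}^*$-projection is $\mathrm{d}\rho$, and assembling it with the connection terms gives $\nabla(\rho)$; the term $\frac2n\tr(\Phi^{-1}\rho\wedge\tau)\Phi$ then arises from the same projection as the $\Phi$-trace adjustment that keeps $\nabla(\rho)$ genuinely $\mathcal{S}^*$-valued.

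The obstacle I expect is not conceptual but arithmetic: correctly tracking signs through the matrix-valued wedges (via $(\alpha\wedge\beta)^{\top} = (-1)^{pq}\beta^{\top}\wedge\alpha^{\top}$), correctly assigning the conformal weight carried by $\lambda$ in the $\gamma$-slot, and pinning down the numerical coefficients $-2$ and $\frac2n$ from the trace identities for $\rho^{\top}=\rho$ and $\Phi\tau^{\top}=\tau\Phi$. What makes the result close with no loose ends is that $h$ maps genuinely into $Sp(n)$, where $\mathrm{d}\mu + \mu\wedge\mu = 0$ holds identically; consequently $\mathrm{d}^2 = 0$ needs no checking, and because $\gamma$ sits at the top of the $\mathfrak{sp}(n)$ grading with $\mathrm{d}\gamma$ closing inside $\{\alpha,\beta,\gamma\}$, no new invariants can enter. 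These two equations therefore complete the structure equations, and the same conclusion can be reached---more laboriously---by differentiating the equations of Theorem~\ref{thmembedstr} and imposing $\mathrm{d}^2=0$.
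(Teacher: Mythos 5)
Your proposal is correct and follows essentially the same route as the paper: pull back the remaining $\mathrm{d}\gamma$ block of the $Sp(n)$ Maurer--Cartan equation through the embedding, identify $h^*(\gamma)=s(\omega)\Phi+\rho$ and $h^*(\beta)=\theta+\tau$, and split the resulting symmetric-matrix identity into its $\mathbb{R}\Phi$ (i.e.\ $\Phi^{-1}$-trace) and $\mathcal{S}^*$ components. Your side observations --- that $\varphi\Phi+\Phi\varphi^{\top}=0$ and $\Phi\tau^{\top}=\tau\Phi$ --- are consistent with the paper's bookkeeping of which terms are $\Phi^{-1}$-traceless.
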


Finer resolution of these syzygies among the invariants could be obtained for
specific $n$ using Clebsch--Gordan relations.

\begin{proof}
Write $C(\omega) = s(\omega) \Phi + \rho$ for $\rho = r\Phi\omega\Phi + Q_{12}(\omega) +
Q_{-}(\omega)$ a semi-basic 1-form valued in $\mathcal{S}^*$.
By pulling back the final part of Equation~\ref{mc}, it must be
that 
\begin{equation}
0=
\mathrm{d}\left(s(\omega) \Phi + \rho\right) + \left(\varphi - \frac12 \lambda I +
\tau\right)\wedge(s(\omega) \Phi + \rho) - (s(\omega)\Phi + \rho)\wedge\left(\varphi
- \frac12 \lambda I+\tau\right)^{\top}.
\end{equation}
So, $\mathrm{d}s(\omega)$ is given by the $\Phi^{-1}$-trace of this equation,
and the components of $\mathrm{d}\rho$ are given by the projection of this
equation onto $\mathcal{S}^*$ by subtracting the $\Phi^{-1}$-trace part.
The terms $(\varphi + \tau)\wedge s(\omega)\Phi + \varphi \wedge \rho -
\frac12 \lambda  \wedge \rho$ and their transposes are $\Phi^{-1}$-traceless.
The component $-\frac12 \lambda \wedge s(\omega) \Phi$ and $\tau \wedge
\rho$ and their transposes have non-trivial $\Phi^{-1}$-trace.
\end{proof}

\begin{cor}
$P=0$ locally if and only if $P=Q_{12}=Q_{-}=s=0$ at a point.  In this case, $\mathrm{d}r =  2 r \lambda$.
\end{cor}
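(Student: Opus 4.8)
The plan is to prove the two implications separately. The forward implication is immediate from the structure equations, while the converse is a unique-continuation statement whose proof rests on recognizing a closed, homogeneous linear system. For the forward direction, suppose $P\equiv 0$ on a neighborhood. Then $\tau=P(\omega)\equiv 0$ and $\nabla(\tau)\equiv 0$, so the second line of Equation~(\ref{eqnembedstr}) reduces to $0=-s(\omega)\Phi\wedge\omega-\pi_{\mathcal{R}}\big([Q_{12}(\omega)+Q_-(\omega)]\wedge\omega\big)$. All three of $s$, $Q_{12}$, $Q_-$ contribute to the $\mathcal{R}$-valued two-form on the right, whereas $r$ does not (it enters only the $\mathfrak{so}(n,\Phi)$ equation, through $-r\Phi\omega\wedge\Phi\omega$). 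By the injectivity of $\bar\Delta$ (Lemma~\ref{kerDelta}) together with the remark after Theorem~\ref{thmembedstr} that each projection is injective on the displayed representation, this forces $s=Q_{12}=Q_-=0$ identically, and in particular at any point.

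For the converse I would set $W=(P,Q_{12},Q_-,s)$ and argue that $W$ obeys a closed homogeneous linear covariant system. Reading off $\nabla(\tau)$ from the second line of Equation~(\ref{eqnembedstr}) and $\mathrm{d}(s(\omega))$, $\nabla(\rho)$ from Theorem~\ref{thmembedstr2}, one checks that every term in the expressions for $\nabla P$, $\nabla s$, and (after deleting the $r$-component) $\nabla(Q_{12}+Q_-)$ carries an explicit factor drawn from $W$: the sources in $\mathrm{d}(P(\omega))$ are $s,Q_{12},Q_-$; in $\mathrm{d}(s(\omega))=\lambda\wedge s(\omega)-2\tr(\Phi^{-1}\rho\wedge P(\omega))$ each term contains $s$ or $\tau$; and in $\nabla(\rho)$ each term contains $s$ or $\tau$. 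The essential point is that the surviving invariant $r$, which we do \emph{not} claim vanishes, is never a free inhomogeneous source: inside $\rho$ it is always wedged against $\tau=P(\omega)$, and the scaling contribution $r\lambda$ coming from the $Od^*$-weight of $\rho$ is absorbed into the weighted covariant derivative $\nabla r=\mathrm{d}r-2r\lambda$. Thus, using weighted covariant derivatives throughout, the system takes the form $\nabla W=A\cdot W$ and $\nabla r=B\cdot W$, where $A$ and $B$ are built from $\varphi$, $\lambda$, and $r$ but are independent of $W$.

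With this in hand the conclusion follows by linear-ODE uniqueness. Since the invariants are $G$-equivariant, their vanishing at one point of a fiber is equivalent to vanishing on the whole fiber, so the hypothesis is a condition on a point $p\in M$. Joining $p$ to any nearby point by a curve and lifting to $\mathcal{B}$, the restriction of $\nabla W=A\cdot W$ is a homogeneous linear ODE for $W$ along the curve with smooth coefficients; since $W(p)=0$, uniqueness gives $W\equiv 0$ along the curve, hence on a neighborhood, so $P\equiv 0$ locally. Finally, on this neighborhood $W\equiv 0$ reduces $\nabla r=B\cdot W$ to $\nabla r=0$, i.e. $\mathrm{d}r=2r\lambda$; equivalently, substituting $W=0$ (so $\rho=r\Phi\omega\Phi$ and $\tau=0$) into the $\nabla(\rho)$ equation gives $\nabla(\rho)=0$, and combining this with $\nabla\omega=0$ (the first structure equation with $P=0$) yields $(\mathrm{d}r-2r\lambda)\wedge\Phi\omega\Phi=0$, which forces $\mathrm{d}r=2r\lambda$ because the components of $\omega$ span the cotangent space and $\Phi\bullet\Phi$ is invertible.

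I expect the main obstacle to be establishing the homogeneity of the $W$-subsystem, that is, verifying that $r$ never appears as a $W$-independent source. This requires carefully tracking the $Od^*$-scaling weight of $\rho$ and confirming that the only $W$-free occurrence of $r$ is the $r\lambda$ term that is, by definition, part of $\nabla r$, while every other appearance of $r$ is coupled to $\tau=P(\omega)$. Once this homogeneity is secured, both the propagation of $W\equiv 0$ and the identity $\mathrm{d}r=2r\lambda$ are routine.
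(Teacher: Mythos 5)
Your proof is correct, and for the two parts the paper actually writes out it follows essentially the same route: the forward implication is read off from the $\mathrm{d}(P(\omega))$ structure equation using the fact that only the $\mathcal{Q}_0$ (i.e.\ $r$) component is annihilated by $\pi_{\mathcal{R}}\circ\Delta$ (the paper cites Lemma~\ref{dbb1} for this rather than Lemma~\ref{kerDelta}, but your appeal to the injectivity remark after Theorem~\ref{thmembedstr} lands on the same fact), and your derivation of $(\mathrm{d}r-2r\lambda)\wedge\Phi\omega\Phi=0$ is the same computation the paper performs with $\hat\omega=\Phi\omega\Phi$. Where you genuinely add something is the converse implication: the paper's proof does not address it explicitly, leaving it to the general machinery of Lemma~\ref{leaves} and the existence/uniqueness corollaries (the leaf of the singular foliation of $\mathbf{K}_\Phi$ through a point with $P=Q_{12}=Q_-=s=0$ stays in that locus, and $\kappa(\mathcal{B})$ lies in a single leaf). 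Your argument makes this concrete by isolating the homogeneous linear subsystem $\nabla W=A\cdot W$ for $W=(P,Q_{12},Q_-,s)$ and invoking ODE uniqueness along curves; the key verification you flag --- that $r$ never occurs as a $W$-free source because inside $\rho$ it is always wedged against $\tau=P(\omega)$ or $s(\omega)$, and its pure scaling term is exactly $\nabla r$ --- is indeed the crux, and it checks out against Theorem~\ref{thmembedstr2} and the schematic in Equation~(\ref{kappaJ}). The payoff of your version is a self-contained, elementary proof of the unique-continuation direction; the payoff of the paper's implicit version is that it is subsumed by the general Lie-algebroid/foliation statements it needs anyway.
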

\begin{proof}
If $P=0$ locally, then $d(P(\omega))=0$, but the kernel of
$\pi_\mathcal{R} \circ \Delta$ is $Q_{0}$.  Therefore, $s=Q_{12}=Q_{-}=0$.
Write $\hat{\omega} = \Phi \omega \Phi$.
\begin{equation}
\begin{split}
0 &= \mathrm{d}(r  \hat{\omega} ) + r \theta\wedge \hat{\omega} - r\hat{\omega}\wedge \theta^{\top}\\
&= \mathrm{d} r  \wedge \hat{\omega} + r \Phi \left( 
- \lambda  \wedge \omega + \varphi^\top\wedge \omega - \omega \wedge \varphi
\right) \Phi + r \theta\wedge \hat{\omega} - r\hat{\omega}\wedge \theta^{\top}\\
&= \mathrm{d} r  \wedge \hat{\omega} + r \Phi \left( 
- \lambda  \wedge \omega + \varphi^\top\wedge \omega - \omega \wedge \varphi
\right) \Phi + r \theta\wedge \hat{\omega} - r\hat{\omega}\wedge \theta^{\top}\\
&= \mathrm{d}r \wedge \hat{\omega} - r \lambda \wedge \hat{\omega} 
-r \varphi \Phi \wedge \omega\Phi + r \Phi \omega \wedge \Phi \varphi^{\top}
+ - r \lambda \wedge \hat{\omega} + r \varphi \wedge\hat{\omega} 
-r \hat{\omega} \wedge \varphi^{\top}\\
&= (\mathrm{d}r - 2 r \lambda)  \wedge \hat{\omega}.
\end{split}
\end{equation}
Therefore, $\mathrm{d}r - 2 r \lambda \equiv 0$ modulo $\omega_{ij}$ for all
$i,j$.
\end{proof}

This helps us see that the invariant $r$ is somewhat spurious, in the sense
that it represents the scaling implicit in the identification $\mathcal{S}
\leftrightarrow \mathcal{S}^*$ or equivalently in the pairing
$\pair{\cdot,\cdot}$.  
Since this pairing gives pseudo-Riemannian metric over
$M$ that is only defined up to a conformal factor, the scalar curvature can be
varied freely.

\begin{cor}
Other than signature, there are no intrinsic invariants of Hessian PDEs in two variables.
All invariants arise from the particular embedding.
\label{no2}
\end{cor}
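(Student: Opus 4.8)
The plan is to show that for $n=2$ the entire tower of structure invariants in Theorem~\ref{thmembedstr} collapses, leaving only the scalar $r$, which is then discarded as a spurious artifact of the conformal scaling. First I would record the relevant dimensions: for $n=2$ the model has $m=\frac12 n(n+1)-1 = 2$, so $\dim\mathcal{S}=\dim\mathcal{S}^*=2$. By the First Fundamental Lemma, the essential torsion $P$ of an embeddable $Od(G)$-structure is valued in $\mathcal{T}'\cong\mathcal{Q}_2$, so the crux is to prove $\mathcal{Q}_2=0$ in this case. This is a pure dimension count using the decompositions of Section~\ref{infingeom}: the module $\mathcal{Q}_+\cong\mathcal{S}^*\odot\mathcal{S}^*$ has dimension $\binom{m+1}{2}=\binom{3}{2}=3$, while its submodule $\mathcal{Q}_0+\mathcal{Q}_1$ is identified with the full space of symmetric matrices $\mathbb{R}\Phi^{-1}+\mathcal{S}$, of dimension $1+\dim\mathcal{S}=3$. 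Since $\mathcal{Q}_0+\mathcal{Q}_1\subseteq\mathcal{Q}_+$ with equal dimensions, they coincide, and because $\mathcal{Q}_+=\mathcal{Q}_0+\mathcal{Q}_1+\mathcal{Q}_2$ with $\mathcal{Q}_2=\ker\Pi$, we conclude $\mathcal{Q}_2=0$. Hence $P\equiv 0$ on every embeddable $Od(G)$-structure with $n=2$.

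With $P\equiv 0$ established, I would invoke the preceding Corollary directly: $P=0$ locally forces $Q_{12}=Q_-=s=0$, and the only surviving differential relation is $\mathrm{d}r = 2r\lambda$. Thus all invariants of Theorem~\ref{thmembedstr} vanish except $r$. Finally, as noted in the remark following that Corollary, $r$ is not a genuine contact invariant: it encodes only the conformal scaling built into the identification $\mathcal{S}\leftrightarrow\mathcal{S}^*$ through the pairing $\pair{\cdot,\cdot}$, which is defined merely up to a conformal factor. The relation $\mathrm{d}r=2r\lambda$ expresses precisely that $r$ transforms homogeneously along the $\lambda$ (dilation) direction of $G$, so it can be rescaled freely and normalized; it therefore carries no intrinsic content. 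What survives under the full $Od(G)$-action is exactly the signature of $\Phi$, which is the assertion of the corollary, and any residual numerical data is attached to the particular inclusion $i:M\to\Lambda^o$ of Lemma~\ref{embeddable} rather than to the structure itself.

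The main obstacle I anticipate is not the dimension count but making the final clause precise, namely arguing that $r$ contributes nothing intrinsic. The cleanest route is to observe that once $P=Q_{12}=Q_-=s=0$ and $r$ is normalized, the remaining structure equations reduce to the Maurer--Cartan equations of Equation~(\ref{mc}); by the fundamental lemma of Lie groups every such structure is then locally equivalent to the flat embeddable model. Consequently any two embeddable $Od(G)$-structures over $M^2$ sharing the signature of $\Phi$ are locally equivalent, confirming that the scalar $r$ is a feature of the chosen embedding and not of the underlying contact geometry.
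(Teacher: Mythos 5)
Your argument is essentially the paper's own proof: the paper likewise observes that for $n=2$ the tensor square of $\mathcal{S}\cong\mathbb{R}^2$ is exhausted by $\mathbb{R}I+\mathfrak{so}(2,\Phi)+\mathcal{R}$ (your count $\dim\mathcal{Q}_+=3=\dim(\mathcal{Q}_0+\mathcal{Q}_1)$ is the same computation restricted to the symmetric part), concludes $\mathcal{T}'\cong\mathcal{Q}_2=0$ hence $P\equiv 0$, and then kills $Q_{12}$, $Q_-$, $s$ via the preceding corollary. The one place you are slightly weaker than the paper is the disposal of $r$: the relation $\mathrm{d}r=2r\lambda$ only rescales $r$ by positive factors along the fiber, so it normalizes $\lvert r\rvert$ but does not by itself reduce a structure with $r\neq 0$ to the flat Maurer--Cartan model as you claim in your last paragraph; the paper closes this by invoking the local conformal flatness of (semi-)Riemannian surfaces, i.e.\ that a conformal structure in dimension two has no local invariants, and you should cite that fact (or an equivalent one) rather than the scaling relation alone.
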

\begin{proof}
Consider an $Od(G)$-structure determined by $\Phi$ on a surface.   Then
$\mathcal{S} \cong \mathbb{R}^2$, so $\mathcal{S} \otimes \mathcal{S} \cong
\mathfrak{so}(2,\Phi) + \mathbb{R}I  + \mathcal{R}$, and $\mathcal{T}'=0$.  In
this case, $Q_{12}$, $Q_{+}$ and $s$ must also be identically zero, as they are derivatives
of $P = 0$.  The scalar function $r$ is still present, but Riemannian and
semi-Riemannian surfaces are
locally conformally flat, so $r$ can be normalized away.  Thus, there are no
intrinsic invariants of hyperbolic Hessian PDEs in two variables.  To glean
any information in this case, one must examine the extrinsic invariants of the
hypersurface $F^{-1}(0)$, as done in \cite{The2010}.  
\end{proof}

Here are the lemmas that provide the decompositions in the structure theorem.

\begin{lemma}
The kernel of $(\pi_{\mathbb{R}I} \otimes 1)\circ \Delta$ is $(\mathbb{R}\Phi
\otimes \mathcal{S}^*) + \mathcal{Q}_{+} \subset (\mathbb{R}\Phi \oplus
\mathcal{S}^*) \otimes \mathcal{S}^*$.
\label{dbb0}
\end{lemma}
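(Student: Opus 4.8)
The plan is to compute the $\mathbb{R}I$-component of $\Delta(C)$ explicitly through the trace and then translate the condition for its vanishing into a self-adjointness statement for $\pair{\cdot,\cdot}$. First I would unwind the definition: since $\Delta$ agrees with $\bar\Delta$ on the restricted domain, and since $C(\omega)$ takes values in the symmetric matrices $\mathbb{R}\Phi + \mathcal{S}^*$ (so $C(A)^{\top} = C(A)$), evaluation on $A,B \in \mathcal{S}$ gives $\Delta(C)(A,B) = C(A)B - C(B)A$. Applying $\pi_{\mathbb{R}I}: a \mapsto \tfrac1n\tr(a)I$ in the first factor yields
\[
(\pi_{\mathbb{R}I}\otimes 1)(\Delta(C))(A,B) = \tfrac1n\left(\tr(C(A)B) - \tr(C(B)A)\right)I.
\]
Hence $C$ lies in the kernel if and only if the bilinear form $(A,B) \mapsto \tr(C(A)B)$ on $\mathcal{S}$ is symmetric.

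Next I would examine this symmetry condition on each summand of the domain $(\mathbb{R}\Phi + \mathcal{S}^*)\otimes \mathcal{S}^*$, exploiting that the form $(A,B)\mapsto\tr(C(A)B)$ is linear in $C$. On $\mathbb{R}\Phi \otimes \mathcal{S}^*$, write $C(A) = f(A)\Phi$ with $f \in \mathcal{S}^*$; then $\tr(C(A)B) = f(A)\tr(\Phi B) = f(A)\tr_\Phi(B) = 0$ because every $B \in \mathcal{S}$ is $\Phi$-traceless. Thus this summand contributes zero to the trace form and so lies entirely in the kernel. On $\mathcal{S}^* \otimes \mathcal{S}^*$, I would use the identification of $\mathcal{S}^*$ with the symmetric matrices $\{\Phi Q(A)\Phi\}$ from Section~\ref{infingeom}: writing $C(A) = \Phi Q(A)\Phi$ for an endomorphism $Q:\mathcal{S}\to\mathcal{S}$ gives $\tr(C(A)B) = \tr(\Phi Q(A)\Phi B) = n\pair{Q(A),B}$. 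The symmetry condition then reads $\pair{Q(A),B} = \pair{Q(B),A} = \pair{A,Q(B)}$, which is precisely the defining condition that $Q$ be self-adjoint for $\pair{\cdot,\cdot}$, i.e. $Q \in \mathcal{Q}_{+}$.

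Combining the two cases completes the argument: since the $\mathbb{R}\Phi\otimes\mathcal{S}^*$ part never contributes to the trace form, the full kernel splits as all of $\mathbb{R}\Phi\otimes\mathcal{S}^*$ together with the kernel inside $\mathcal{S}^*\otimes\mathcal{S}^*$, namely the self-adjoint endomorphisms $\mathcal{Q}_{+}$. This gives $\ker\big((\pi_{\mathbb{R}I}\otimes 1)\circ\Delta\big) = (\mathbb{R}\Phi\otimes\mathcal{S}^*) + \mathcal{Q}_{+}$, as claimed, and simultaneously shows $\mathcal{Q}_{-}$ injects.

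The main obstacle is bookkeeping rather than conceptual: I must keep the two embeddings of $\mathcal{S}^*$ into symmetric matrices distinct and track the normalization factors (the $\tfrac1n$ in $\pi_{\mathbb{R}I}$ and the $\tfrac1n$ in $\pair{\cdot,\cdot}$), so that the trace-symmetry condition lands exactly on the self-adjointness condition defining $\mathcal{Q}_{+}$ rather than on a transposed or rescaled variant. Once the dictionary $\tr(C(A)B) = n\pair{Q(A),B}$ is pinned down, the identification of the kernel is immediate.
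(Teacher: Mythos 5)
Your proposal is correct and follows essentially the same route as the paper's proof: decompose $C$ over the two summands $\mathbb{R}\Phi\otimes\mathcal{S}^*$ and $\mathcal{S}^*\otimes\mathcal{S}^*$, kill the first by $\Phi$-tracelessness of $\mathcal{S}$, and recognize the symmetry of $(A,B)\mapsto\tr(C(A)B)$ on the second summand as the self-adjointness condition defining $\mathcal{Q}_{+}$ via the identification $C_1=\Phi Q\Phi$. Your version is somewhat more explicit about the normalizations, but no new ideas are involved.
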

\begin{proof}
Suppose that $0 = \pi_{\mathbb{R}I}(\Delta(C)(A,B)) = 
\frac1n \tr\left( C(A)B - C(B)A \right)$ for all $A, B$ in
$\mathcal{S}$.    Write $C(A) = C_0 + C_1$ where $C_0 \in \mathbb{R}\Phi^{-1}  \otimes
\mathcal{S}^*$ and $C_1 \in \mathcal{S}^* \otimes \mathcal{S}^*$. 
Clearly, any $C=C_0$ is in the kernel, since $\Phi$-tracelessness defines
$\mathcal{S}$.
So, $C=C_0+C_1$ is in the kernel if and only if $C_1$ is in the kernel, meaning $0 =
\frac1n\tr(C_1(A)B - C_1(B)A)$ for all $A,B \in \mathcal{S}$.  By writing
$C_1=\Phi C_1^*\Phi$, one sees that this is precisely the condition that
$C_1\in \mathcal{Q}_+$.
\end{proof}

\begin{lemma}
$\ker (\pi_\mathcal{R} \otimes 1) \circ \Delta$
is isomorphic to the 1-dimensional submodule $\mathcal{Q}_0$ of $\mathcal{S}^* \odot
\mathcal{S}^*$ that is given by the scalar form $\pair{\cdot, \cdot}$.  
\label{dbb1}
\end{lemma}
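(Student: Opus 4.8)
The plan is to compute $(\pi_{\mathcal{R}}\otimes 1)\circ\Delta$ explicitly and read off its kernel. Since $\Delta$ is just the restriction of $\bar\Delta$ to $(\mathbb{R}\Phi\oplus\mathcal{S}^*)\otimes\mathcal{S}^*$, Lemma~\ref{kerDelta} gives that $\Delta$ is injective, so the kernel of the composite consists exactly of those $C$ for which $\Delta(C)$ has vanishing $\mathcal{R}$-component, i.e.\ $\Delta(C)$ is valued in $\mathfrak{g}\otimes(\mathcal{S}^*\wedge\mathcal{S}^*)$ (recall $\ker\pi_{\mathcal{R}}=\mathfrak{g}=\mathbb{R}I\oplus\mathfrak{so}(n,\Phi)$ inside $\mathfrak{gl}(n)$). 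Writing a general domain element as $C(A)=s(A)\Phi+\Phi D(A)\Phi$ with $s\in\mathcal{S}^*$ and $D\in\mathrm{Hom}(\mathcal{S},\mathcal{S})$, and using $\Delta(C)(A,B)=C(A)B-C(B)A$ (each $C(A)$ is symmetric), the problem reduces to applying the projection $\pi_{\mathcal{R}}(a)=\tfrac12(a+\Phi a^\top\Phi^{-1})-\tfrac1n\tr(a)I$ from Section~\ref{infingeom} to this expression.

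The crucial point is a cancellation of trace terms. First I would note that the $s$-part $a_1=s(A)\Phi B-s(B)\Phi A=\Phi\big(s(A)B-s(B)A\big)$ already lies in $\mathcal{R}$, since $\Phi a_1^\top\Phi^{-1}=a_1$ and $\tr a_1=0$ (as $\mathcal{S}$ is $\Phi$-traceless). For the $D$-part $a_2=\Phi D(A)\Phi B-\Phi D(B)\Phi A$, the identity $\tfrac12(X\Phi Y+Y\Phi X)=\Pi_1(X\odot Y)+\pair{X,Y}\Phi^{-1}$ shows that the $\mathbb{R}I$-contribution of $\tfrac12(a_2+\Phi a_2^\top\Phi^{-1})$ is precisely $(\pair{D(A),B}-\pair{D(B),A})I$, which is exactly cancelled by $-\tfrac1n\tr(a_2)I$. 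This yields the clean formula
\[
\pi_{\mathcal{R}}(\Delta(C)(A,B)) = \Phi\Big[\, s(A)B - s(B)A + \Pi_1(D(A)\odot B) - \Pi_1(D(B)\odot A)\,\Big],
\]
with the bracket valued in $\mathcal{S}$. The inclusion $\mathcal{Q}_0\subseteq\ker$ is then immediate: for $s=0$ and $D=r\,\mathrm{id}$ the bracket is $r\big(\Pi_1(A\odot B)-\Pi_1(B\odot A)\big)=0$ by symmetry of $\odot$; equivalently $\Delta(\mathcal{Q}_0)$ is valued in $\mathfrak{so}(n,\Phi)\otimes(\mathcal{S}^*\wedge\mathcal{S}^*)$, which $\pi_{\mathcal{R}}$ annihilates.

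It remains to prove the reverse inclusion, namely that vanishing of the bracket for all $A,B$ forces $s=0$ and $D\in\mathcal{Q}_0$, and this is the main obstacle. Here I would contract the bracket against an arbitrary $F\in\mathcal{S}$ using $\pair{\Pi_1(X\odot Y),F}=\tfrac1n\pair{X,Y,F}$ with the fully symmetric trilinear form, obtaining the scalar relation $s(A)\pair{B,F}-s(B)\pair{A,F}+\tfrac1n\big(\pair{D(A),B,F}-\pair{D(B),A,F}\big)=0$ for all $A,B,F$. Because $(\pi_{\mathcal{R}}\otimes1)\circ\Delta$ is $G$-equivariant, its kernel is a submodule, so restricted to any irreducible summand of $\mathbb{R}\Phi\otimes\mathcal{S}^*\oplus\mathcal{Q}_0\oplus\mathcal{Q}_1\oplus\mathcal{Q}_2\oplus\mathcal{Q}_-$ it is zero or injective; it then suffices to exhibit one nonzero value on each summand other than $\mathcal{Q}_0$. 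The manifestly injective map $s\mapsto s(A)B-s(B)A$ (with zero kernel once $\dim\mathcal{S}\ge2$) disposes of $\mathbb{R}\Phi\otimes\mathcal{S}^*\cong\mathcal{S}^*$, and a single substitution into the scalar relation disposes of the irreducible $\mathcal{Q}_1\cong\mathcal{S}$. The genuinely delicate case is $\mathcal{Q}_2$ and $\mathcal{Q}_-$, which need not be irreducible for general $n$: for these I expect to argue either by a finer Clebsch--Gordan decomposition (which the paper otherwise avoids) or, in the spirit of Lemmas~\ref{kerDelta} and~\ref{kerdeltabar}, by evaluating the bracket on the tautological form $\omega$ and applying Cartan's lemma to the independent one-forms $\omega_{ij}$ to force any surviving $D_2$ or $D_-$ component to vanish. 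Assembling these, $\ker(\pi_{\mathcal{R}}\otimes1)\circ\Delta=\mathcal{Q}_0$, the one-dimensional module carrying the scalar form $\pair{\cdot,\cdot}$.
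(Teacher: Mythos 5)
Your explicit formula for $\pi_{\mathcal{R}}(\Delta(C)(A,B))$ is correct---the cancellation of the $\mathbb{R}I$-terms against $-\tfrac1n\tr(\cdot)I$ does occur---and it gives a clean proof of the easy inclusion $\mathcal{Q}_0\subseteq\ker$. But the reverse inclusion, which is the substance of the lemma, is not actually established. You leave the components $\mathcal{Q}_2$ and $\mathcal{Q}_-$ to one of two strategies you ``expect'' to work, and these are exactly the components where the work lies, so what you have is a plan rather than a proof. There is also a flaw in the Schur-type reduction as you state it: since $\mathbb{R}\Phi\otimes\mathcal{S}^*\cong\mathcal{S}^*$ and $\mathcal{Q}_1\cong\mathcal{S}$ are isomorphic irreducible $G$-modules (both appear as $\mathcal{V}_4$ in the $n=3$ table, and they even carry the same scaling weight, since $\Phi$ and elements of $\mathcal{S}^*$ both live in $V^*\odot V^*$), the domain contains an isotypic component of multiplicity two. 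Showing the map is nonzero on each of the two summands separately does not exclude a ``diagonal'' kernel mixing $s$ with the $\mathcal{Q}_1$-part of $D$; you would need injectivity on the whole isotypic component, which requires an extra computation you have not done.

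The idea you are missing is the one the paper uses to bypass all of this case analysis: writing $C(A)=\Phi Y(A)$ with $Y\in(\mathbb{R}I+\mathcal{R})\otimes\mathcal{S}^*$, a short computation shows that $\pi_{\mathcal{R}}(\Delta(C)(A,B))$ equals $\Phi$ times the $\mathcal{S}$-component of $\bar\delta(Y)(A,B)$. Hence $C$ lies in the kernel precisely when $\bar\delta(Y)(A,B)$ is a multiple of $\Phi^{-1}$ for all $A,B$. If $\bar\delta(Y)=0$, Lemma~\ref{kerdeltabar} identifies $Y$ exactly, and intersecting with the domain of $\Delta$ leaves only $C(A)=c\,\Phi A\Phi$, i.e.\ $\mathcal{Q}_0$; if $\bar\delta(Y)$ is a nonzero multiple of $\Phi^{-1}$, then (as in the proof of Lemma~\ref{PR}) its $\bar\delta$-preimage has a nontrivial $\mathfrak{so}(n,\Phi)\otimes\mathcal{S}^*$ component and therefore does not lie in $(\mathbb{R}\Phi\oplus\mathcal{S}^*)\otimes\mathcal{S}^*$ at all. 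This reduction to the already-established kernel of $\bar\delta$ handles every summand at once; without it, you would need to carry out in full the Cartan-lemma computation you only sketch for $\mathcal{Q}_2$ and $\mathcal{Q}_-$, and to repair the multiplicity issue in the $\mathcal{S}^*$-isotypic component.
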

\begin{proof}
Write $C = Y\Phi$ for $Y \in (\mathbb{R}I + \mathcal{R}) \otimes
\mathcal{S}^*$.  Suppose $C$ is in the kernel, so for all $A,B \in
\mathcal{S}$,
\begin{equation}
\begin{split}
0 
&= \pi_\mathcal{R}\left( \Delta(C)(A,B)\right) \\
&= \frac12 \left(  C(A)B - C(B)A + \Phi ( C(A)B - C(B)A )^{\top} \Phi^{-1}
\right) -\frac1n\tr(C(A)B-C(B)A) I \\
&= \frac12 \left( \Phi Y(A) B - \Phi Y(B) A + \Phi B Y(A) - \Phi A Y(B)
\right) -
\frac1n \tr(\Phi Y(A) B - \Phi Y(B) A)I  \\
&= \Phi \left( \frac12 \left( Y(A) B -  Y(B) A + B Y(A) - A Y(B) \right) -
\frac1n \tr(\Phi Y(A) B - \Phi Y(B) A)\Phi^{-1} \right) \\
&=
\Phi \left( \bar{\delta}(Y)(A,B) - \frac1n \tr( \Phi \bar{\delta}(Y)(A,B)) \Phi^{-1}
\right)\\
&= \Phi \pi_\mathcal{S} ( \bar{\delta}(Y)(A,B)).
\end{split}
\end{equation}
So, either $Y \in \ker \bar\delta$ or $\bar{\delta}(Y)(A,B)$ is a multiple of
$\Phi^{-1}$.   By Lemma~\ref{kerdeltabar}, the former case implies that $C(A) = c \Phi A \Phi$.
By the proof of Lemma~\ref{PR}, the latter case means that
$e(Y) = \Phi^{-1} C$ projects non-trivially to $\mathcal{Q}_-$; however, the
actual pre-image via $\bar\delta$ of $\Phi^{-1}$ also projects non-trivially to
$\mathfrak{so}(n)\otimes \mathcal{S}^*$, so it is not contained in the domain
of $\Delta$.
\end{proof}

\begin{lemma}
$\ker ((\pi_{\mathfrak{so}(n,\Phi)}\otimes 1) \circ \Delta )= \ker (\Delta -
\Delta^{\top})$ is isomorphic to the $m$-dimensional irreducible representation $\mathbb{R} \Phi \otimes
\mathcal{S}^*$. 
\label{dbb2}
\end{lemma}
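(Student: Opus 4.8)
The plan is to establish the two asserted equalities separately. The first equality $\ker((\pi_{\mathfrak{so}(n,\Phi)}\otimes 1)\circ\Delta) = \ker(\Delta - \Delta^\top)$ is essentially the definition of the projection: since $\pi_{\mathfrak{so}(n,\Phi)}(M) = \frac12(M - \Phi M^\top \Phi^{-1})$, one has $(\pi_{\mathfrak{so}(n,\Phi)}\otimes 1)\circ\Delta = \frac12(\Delta - \Delta^\top)$ as soon as $\Delta^\top$ is read as $C \mapsto \Phi(\Delta(C))^\top\Phi^{-1}$, so the two kernels coincide. The real content is therefore the identification of this kernel with $\mathbb{R}\Phi\otimes\mathcal{S}^*$, which I would prove by a two-sided inclusion.

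For the inclusion $\mathbb{R}\Phi\otimes\mathcal{S}^* \subseteq \ker$, I would simply evaluate: for $C(A)=s(A)\Phi$ the image $\Delta(C)(A,B) = s(A)\Phi B - s(B)\Phi A$ satisfies $\Phi(\Delta(C)(A,B))^\top\Phi^{-1} = \Delta(C)(A,B)$, i.e.\ it is $\Phi$-self-adjoint and so lies in $\mathbb{R}I + \mathcal{R}$, the kernel of $\pi_{\mathfrak{so}(n,\Phi)}$. This is exactly the module $\mathbb{R}\Phi\otimes\mathcal{S}^*$ named in the statement, of dimension $m$, so it already accounts for the claimed isomorphism type and dimension; it remains only to rule out anything further.

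For the reverse inclusion I would use the splitting of the domain $(\mathbb{R}\Phi\oplus\mathcal{S}^*)\otimes\mathcal{S}^* = (\mathbb{R}\Phi\otimes\mathcal{S}^*)\oplus(\mathcal{S}^*\otimes\mathcal{S}^*)$ from Section~\ref{infingeom}, writing a general element of the second summand as $C = Y\Phi$ with $Y$ valued in the traceless $\Phi$-symmetric matrices $\mathcal{R}$. Since the first summand is already in the kernel and $(\pi_{\mathfrak{so}(n,\Phi)}\otimes 1)\circ\Delta$ is linear, the task reduces to showing that this map is injective on the second summand. Using that each $Y(A)$ is $\Phi$-symmetric, the condition $\pi_{\mathfrak{so}(n,\Phi)}(\Delta(C))=0$ unwinds to the commutator identity $[Y(A),\Phi B] = [Y(B),\Phi A]$ for all $A,B\in\mathcal{S}$, or equivalently, as an identity of $\mathfrak{gl}(n)$-valued two-forms, the vanishing anticommutator $Y(\omega)\wedge\Phi\omega + \Phi\omega\wedge Y(\omega) = 0$.

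The hard part is to deduce $Y=0$ from this identity, and this is where I expect the real work to lie. I would proceed exactly as in Lemmas~\ref{kerDelta} and~\ref{kerdeltabar}, diagonalizing $\Phi$ (permissible by the Note following Theorem~\ref{thmembedstr}) and applying Cartan's lemma to the independent entries of $\omega$. The warning is that, unlike in those lemmas, the diagonal components $i=j$ of the identity collapse to $0=0$ by the $\Phi$-symmetry of $Y$, so all information must be extracted from the off-diagonal components, while carefully tracking the single relation $\tr(\Phi\omega)=0$ that defines $\mathcal{S}$; managing this bookkeeping is the main obstacle. A cleaner but less elementary alternative would be to observe that, since $\mathcal{R}$ is an irreducible $O(n,\Phi)$-module (as shown in Section~\ref{infingeom}), the solution space is an $O(n,\Phi)$-submodule of $\mathrm{End}(\mathcal{R})$; as it cannot contain the identity endomorphism (which would force $[R,S]=0$ throughout $\mathcal{R}$), a short decomposition argument forces it to vanish.
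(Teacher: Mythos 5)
Your reductions are all correct and they line up with what the paper actually does here, which is very little: the paper states that the proof is ``a repeated use of Cartan's lemma, analogous to that of Lemmas~\ref{kerDelta} and \ref{kerdeltabar} but more tedious'' and gives no details. Your first two steps are genuinely complete: the identification of $(\pi_{\mathfrak{so}(n,\Phi)}\otimes 1)\circ\Delta$ with $\frac12(\Delta-\Delta^{\top})$ (reading $\top$ as the $\Phi$-adjoint) is definitional, and the computation showing that $C(A)=s(A)\Phi$ lands in the $\Phi$-self-adjoint matrices $\mathbb{R}I+\mathcal{R}$ is right, since $\Phi(\Phi B)^{\top}\Phi^{-1}=\Phi B$. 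Likewise the reduction of the reverse inclusion to the identity $[Y(A),\Phi B]=[Y(B),\Phi A]$ for $Y$ valued in $\mathcal{R}$ is a correct and useful reformulation.

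The gap is that the decisive step --- deducing $Y=0$ from that commutator identity --- is announced but never executed, and neither of your two proposed routes closes it as written. The Cartan's-lemma route is the one the paper points to, but you yourself identify the obstruction (the loss of the diagonal equations that drive the argument in Lemma~\ref{kerdeltabar}) without overcoming it, so at present this is a plan rather than a proof. The representation-theoretic fallback is also incomplete: the solution space is an $O(n,\Phi)$-submodule of $\mathrm{End}(\mathcal{S})\cong\mathcal{Q}_{0}+\mathcal{Q}_{1}+\mathcal{Q}_{2}+\mathcal{Q}_{-}$, and ruling out the one-dimensional piece $\mathbb{R}\cdot\mathrm{id}=\mathcal{Q}_{0}$ says nothing about the remaining constituents; to make this argument work you would have to exhibit, for each irreducible summand, a single element violating the identity, which is comparable in effort to the Cartan computation and requires the finer decompositions the paper deliberately avoids. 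So the proposal is a faithful reconstruction of the paper's strategy and correctly isolates where the work lies, but the lemma is not yet proved by it.
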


The proof of this lemma is a repeated use of Cartan's lemma, analogous to that
of Lemmas~\ref{kerDelta} and \ref{kerdeltabar} but more tedious.

\subsection{A Classifying Space for Second-Order PDEs}
Because the Theorems~\ref{thmembedstr} and \ref{thmembedstr2} provide
structure equations that are closed under exterior derivative and 
have finitely many structure coefficients, they fit into the framework of
Cartan's generalization of Lie's third fundamental theorem, which concerns the
existence and uniqueness of pseudo-groups with putative structure equations
\cite{Cartan1904}.
For a summary of the case needed here, see Appendix A of
\cite{Bryant2001a}.  A modern interpretation of this theorem arises in the
theory of groupoids and Lie algebroids \cite{Mackenzie2005}.  All of the
results here are standard consequences of the fact that the space of
differential invariants is finite-dimensional \cite{Olver1995}.  It is
worthwhile to compare this section to Sections~4 and 5 of \cite{Smith2009a}, which
only applies to integrable hyperbolic Hessian PDEs.   The presentation here
intentionally mirrors that one closely, but the theorems here apply more
generally to non-degenerate Hessian PDEs in any number of variables.

\begin{defn}
The notation $(\mathcal{B}, M, p)_\Phi$ denotes a smooth, embeddable $Od(G)$-structure over $M$ such that $M$ is
connected and such that $p \in M$.
\end{defn}

Connectedness is very important for these corollaries.

\begin{defn}
Let $\mathbf{K}_\Phi = \mathcal{T}' \oplus (\mathcal{Q}_{1} \oplus
\mathcal{Q}_{2}) \oplus \mathcal{Q}_{-} \oplus \mathcal{Q}_{0} \oplus
\mathcal{S}^*$, a vector space of dimension $m^2+\frac12m(m+1)-1$.
This space is called the ``classifying space'' for $Od(G)$-structures.
For an $Od(G)$-structure $\mathcal{B}$, let $\kappa:\mathcal{B}\to \mathbf{K}_\Phi$
denote the function $\kappa(b) = (P(b),Q_{12}(b),Q_{-}(b),r(b),s(b))$.
\end{defn}

Note that $\mathbf{K}_\Phi$ is an $O(n,\Phi)$-module by the appropriate actions on each
component.  Suppose that $\mathcal{T}'\cong \mathcal{Q}_{2}$ decomposes into irreducible $(Ad(G)\otimes
Od^*(G))$-modules or $(Od^*(G) \otimes Od^*(G))$-modules as
$\mathcal{Q}_{2,1}, \ldots, \mathcal{Q}_{2,q}$, and that $\mathcal{Q}_{-}$
decomposes into irreducible modules $\mathcal{Q}_{-,1}, \ldots
\mathcal{Q}_{-,w}$.  The component $\mathcal{S}^*$ is an
irreducible $Od^*(G)$-module, and the component $\mathbb{R}$ is trivial.  Then
\[\mathbf{K}_\Phi \cong (\mathcal{Q}_{2,1} \oplus \cdots \oplus \mathcal{Q}_{2,q})
\oplus  \mathcal{Q}_1 \oplus (\mathcal{Q}_{2,1} \oplus
\cdots \oplus \mathcal{Q}_{2,q}) \oplus ( \mathcal{Q}_{-,1} \oplus \cdots
\oplus \mathcal{Q}_{-,w}) \oplus \mathcal{Q}_{0} \oplus \mathcal{Q}_{1}\]
is a decomposition of classifying space into irreducible
$O(n,\Phi)$-modules.  
The infinitesimal scaling action acts on each component of $\mathbf{K}_\Phi$
as well, so there is a group action corresponding to $\mathfrak{g}$,
constructed analogously to Appendix~\ref{neg}. If the scaling action is removed by considering the projective
group, then each component becomes the corresponding projective space, in
which case the component $\mathcal{Q}_0\cong \mathbb{R}$ (where $r$ lives) vanishes to a point.

Implicit in Theorem~\ref{thmembedstr2} is a set of equations for
$\mathrm{d}\kappa$ that 
schematically looks like
\begin{equation}
\mathrm{d}\kappa_b = \mathrm{d} \begin{pmatrix}
P\\ Q_{12} \\ Q_{-} \\ r \\ s
\end{pmatrix}
= 
\begin{pmatrix}
\text{linear in $Q_{12},Q_{-},s$} &  \text{quadratic in $P$} \\ 
\text{quadratic in $P,Q_{12},Q_{-},r,s$} &  \text{linear in $Q_{12},Q_{-},r,s$} \\ 
\text{quadratic in $P,Q_{12},Q_{-},r,s$} &  \text{linear in $Q_{12},Q_{-},r,s$} \\ 
\text{quadratic in $P,Q_{12},Q_{-},r,s$} &  \text{linear in $Q_{12},Q_{-},r,s$} \\ 
\text{quadratic in $P,Q_{12},Q_{-},s$} &  \text{linear in $s$} \\ 
\end{pmatrix}
\begin{pmatrix}
\omega_{ij} \\
\theta_{ij} \\
\end{pmatrix}
= J(\kappa(b)) \begin{pmatrix}
\omega \\ \theta
\end{pmatrix}
\label{kappaJ}
\end{equation}
The matrix $J(K)$ given by these formulas is well-defined for any $K \in
\mathbf{K}_\Phi$.  The matrix has $\dim \mathcal{B} =n^2$ columns and $\dim\mathbf{K}_\Phi
= m^2 + \frac12m(m+1)-1$ rows, and its entries are algebraic functions of $K
\in \mathbf{K}_\Phi$ as given by a complete expansion of
Theorem~\ref{thmembedstr2}.
Because the equations in
Theorems~\ref{thmembedstr} and \ref{thmembedstr2}  are closed under exterior
derivative, the matrix $J$ defines the anchor map of a Lie algebroid over $\mathbf{K}_\Phi$. This Lie
algebroid can be integrated to a smooth groupoid over $\mathbf{K}_\Phi$
\cite{Stefan1980,Mackenzie2005}.

\begin{lemma}
The singular distribution on $\mathbf{K}_\Phi$ defined by the columns of the
matrix $J$ is integrable, providing a singular foliation of $\mathbf{K}_\Phi$
into leaves that are submanifolds. That is, for any $K \in \mathbf{K}_\Phi$, there
exists a unique submanifold $\mathcal{O}_J(K)$ $\mathbf{T}_K \mathcal{O}_J(K)
= \mathop{\mathrm{range}}J(K)$.
\label{leaves}
\end{lemma}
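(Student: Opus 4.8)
The plan is to treat the columns of $J$ as a family of vector fields on $\mathbf{K}_\Phi$, to verify that this family is involutive in the generalized (Stefan--Sussmann) sense, and then to invoke the corresponding singular-foliation theorem. The ordinary Frobenius theorem cannot be applied here: the rank of $J(K)$ is not constant, so $\mathop{\mathrm{range}} J$ is a genuinely singular distribution, and the whole point of the lemma is that integrability persists in this setting.

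Write $\{\eta_a\}_{a=1}^{n^2}$ for the coframe $(\omega_{ij},\theta_{ij})$ on $\mathcal{B}$ and $\{e_a\}$ for the dual frame, and let $V_1,\dots,V_{n^2}$ be the vector fields on $\mathbf{K}_\Phi$ given by the columns of $J$, so that $\mathop{\mathrm{range}} J(K) = \operatorname{span}\{V_a(K)\}$. The defining relation $\mathrm{d}\kappa = J(\kappa)(\omega,\theta)$ of Equation~(\ref{kappaJ}) says precisely that each $V_a$ is $\kappa$-related to $e_a$, i.e.\ $\kappa_* e_a = V_a \circ \kappa$. First I would compute the brackets $[e_a,e_b]$ on $\mathcal{B}$ directly from the structure equations: since Theorems~\ref{thmembedstr} and~\ref{thmembedstr2} express $\mathrm{d}\omega$ and $\mathrm{d}\theta$ in the coframe $\{\eta_a\}$ with coefficients that are functions of $\kappa$ alone, the identity $\eta_c([e_a,e_b]) = -\mathrm{d}\eta_c(e_a,e_b)$ gives $[e_a,e_b]=\sum_c c_{ab}^c(\kappa)\,e_c$ for structure functions $c_{ab}^c$ pulled back from $\mathbf{K}_\Phi$. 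Because the $V_a$ are $\kappa$-related to the $e_a$, this pushes forward to $[V_a,V_b]=\sum_c c_{ab}^c\,V_c$, which is exactly the generalized involutivity required. This is no accident: it is the anchor relation of the Lie algebroid already identified above, and the (generalized) Jacobi identity for the $c_{ab}^c$ is nothing but the $\mathrm{d}^2=0$ consistency built into the closed structure equations.

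With involutivity established, the Stefan--Sussmann orbit theorem \cite{Stefan1980} produces an integrable singular distribution whose maximal integral manifolds---the orbits of the family $\{V_a\}$---are immersed submanifolds of $\mathbf{K}_\Phi$ whose tangent space at each point is exactly $\mathop{\mathrm{range}} J$. Taking $\mathcal{O}_J(K)$ to be the orbit through $K$ yields the required leaf, and uniqueness is immediate from the maximality of orbits. That these orbits are honest submanifolds, rather than leaves with pathological topology, follows from the fact noted above that the Lie algebroid integrates to a smooth groupoid over $\mathbf{K}_\Phi$, so that the leaves coincide with the groupoid orbits.

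The hard part will be justifying that the involutivity relation $[V_a,V_b]=\sum_c c_{ab}^c V_c$ holds \emph{identically} on all of $\mathbf{K}_\Phi$, and not merely along the image $\kappa(\mathcal{B})$ swept out by actual embeddable $Od(G)$-structures; on that image the two sides agree only because both are $\kappa$-related to $[e_a,e_b]$. This is precisely the role of Cartan's generalization of Lie's third fundamental theorem: the $\mathrm{d}^2=0$ consistency conditions for the putative structure equations are algebraic identities in the coordinate $K$, so they extend from $\kappa(\mathcal{B})$ to the whole classifying space, and $J$ together with the $c_{ab}^c$ therefore defines a genuine Lie algebroid on $\mathbf{K}_\Phi$. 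Once this extension is in place, the remaining steps are formal.
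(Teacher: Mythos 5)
Your argument is essentially the paper's: the paper gives no separate proof of this lemma, but the paragraph preceding it makes exactly your point --- the closure of the structure equations under exterior derivative makes $J$ the anchor of a Lie algebroid over $\mathbf{K}_\Phi$, which integrates to a smooth groupoid whose orbits are the leaves, citing \cite{Stefan1980,Mackenzie2005}. Your write-up correctly fills in the details the paper delegates to those references, including the key subtlety that the involutivity identities must be read as algebraic identities in $K$ coming from the formal $\mathrm{d}^2=0$ computation rather than as relations holding only on $\kappa(\mathcal{B})$.
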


Through each point $K \in \mathbf{K}_\Phi$, there passes an orbit,
$\mathcal{O}_J(K)$, and each orbit is a submanifold of the base. These orbits
may be regarded as the leaves of a singular foliation of $\mathbf{K}_\Phi$
such that $\mathbf{T}_K\mathcal{O}_J(K)$ is spanned by the columns of $J(K)$.
The next two corollaries are standard consequences of Cartan's structure
theorem, and are proven by considering the symmetry algebra of each leaf and
applying the Cartan--K\"ahler theorem to construct solutions.

\begin{cor}[Existence]
For any non-degenerate $\Phi$ and for any choice of 
$K \in \mathbf{K}_\Phi$, there exists
$(\mathcal{B},M,p)_\Phi$ such that $\kappa(b)=K$ for some $b \in \mathcal{B}_p$.
Moreover, this structure is analytic.  The map $\kappa$ is a submersion to the
leaf $\mathcal{O}_J(K)$.
\end{cor}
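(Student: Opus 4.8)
The plan is to read the conclusion as a Cartan realization problem for the coframe $(\omega,\varphi,\lambda)$ together with the invariant map $\kappa$, and to solve it by integrating the Lie algebroid whose anchor is the matrix $J$ of~(\ref{kappaJ}). The one piece of genuine input is already in hand: Theorems~\ref{thmembedstr} and~\ref{thmembedstr2} are closed under exterior derivative, meaning that applying $\mathrm{d}$ to each structure equation and re-substituting the same equations returns an identity. This closedness is exactly the compatibility hypothesis $\mathrm{d}^2=0$ required by Cartan's generalization of Lie's third theorem in the form summarized in Appendix~A of~\cite{Bryant2001a}, so that theorem applies. Because the structure coefficients and the entries of $J(\kappa)$ are polynomial in $\kappa$, the resulting Lie algebroid over $\mathbf{K}_\Phi$ is real-analytic; this is the source of the asserted analyticity.

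Given $K\in\mathbf{K}_\Phi$, I would integrate this algebroid to the source-connected, source-simply-connected smooth groupoid $\mathcal{G}$ over $\mathbf{K}_\Phi$ recorded after Lemma~\ref{leaves} (see~\cite{Stefan1980,Mackenzie2005}) and set $\mathcal{B}=s^{-1}(K)$, the source fiber over $K$. Since the algebroid has rank equal to the number $n^2$ of coframe forms, $\dim\mathcal{B}=n^2$; the Maurer--Cartan form of $\mathcal{G}$ restricts to $\mathcal{B}$ as a coframe, which I relabel $(\omega,\varphi,\lambda)$, and tautologically these forms obey the structure equations of Theorems~\ref{thmembedstr} and~\ref{thmembedstr2} with the role of $\kappa$ played by the target map $t$. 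A source fiber is mapped submersively onto the groupoid orbit through $K$, which by~(\ref{kappaJ}) and Lemma~\ref{leaves} is precisely the leaf $\mathcal{O}_J(K)$; hence $\kappa=t$ is a submersion onto $\mathcal{O}_J(K)$, and $\kappa(b)=K$ at the identity arrow $b$ over $K$. Source fibers of a source-connected groupoid are connected, so $\mathcal{B}$ is connected.

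It remains to recover the geometric $Od(G)$-structure from this coframed manifold. The first structure equation shows $\mathrm{d}\omega\equiv0\bmod\{\omega_{ij}\}$, so the rank-$\dim\mathfrak{g}$ distribution $\{\omega=0\}$ is Frobenius-integrable; I would take $M$ to be its (connected) leaf space and $p$ the image of $b$. On a fiber $\{\omega=0\}$ the equations collapse to $\mathrm{d}\varphi=-\varphi\wedge\varphi$ and $\mathrm{d}\lambda=0$, whence $\theta=\varphi-\tfrac12\lambda I$ restricts to the Maurer--Cartan form of $\mathfrak{g}$; the $G$-equivariance built into~(\ref{eqnembedstr}) then exhibits $\mathcal{B}\to M$ as a principal $G$-bundle on which $\theta$ is a connection and $\omega$ is the tautological $\mathcal{S}$-valued form, i.e.\ an $Od(G)$-structure in the sense of Definition~\ref{maindef}.

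Finally, I would establish embeddability by integrating the associated flat $\mathfrak{sp}(n)$-valued form. Setting $\gamma=s(\omega)\Phi+r\,\Phi\omega\Phi+Q_{12}(\omega)+Q_-(\omega)$ as in the Second Fundamental Lemma and assembling
\[
\mu=\begin{pmatrix}\theta+\tau & \gamma\\ \omega & -(\theta+\tau)^{\top}\end{pmatrix},
\]
the three blocks of the equation $\mathrm{d}\mu+\mu\wedge\mu=0$ in~(\ref{mc}) become, respectively, the first structure equation, the Second Fundamental Lemma, and the two identities of Theorem~\ref{thmembedstr2}; thus $\mu$ is flat by construction. By the fundamental theorem on Maurer--Cartan forms, a flat $\mathfrak{sp}(n)$-valued $1$-form integrates on a neighborhood of $b$ to a map $h\colon\mathcal{B}\to Sp(n)$ pulling the Maurer--Cartan form of $Sp(n)$ back to $\mu$; since $\alpha=\omega$ is semibasic of maximal rank $m$, condition~(3) of Lemma~\ref{embeddable} certifies that $\mathcal{B}$ is embeddable. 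I expect the real obstacle to lie not in the abstract integration but in this last verification: checking block-by-block that the reassembled $\mu$ is flat is exactly the point at which the closedness of Theorems~\ref{thmembedstr} and~\ref{thmembedstr2} is consumed, and one must confirm that the $\mathcal{R}$-, $\mathbb{R}I$-, and $\mathfrak{so}(n,\Phi)$-projections of the $\mathrm{d}\beta$-block together with the $\gamma$-block reproduce~(\ref{eqnembedstr}) and the syzygies with no residual terms.
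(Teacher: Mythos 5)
Your proposal is correct and follows essentially the same route the paper intends: the paper disposes of this corollary in one sentence as a standard consequence of Cartan's realization theorem for closed structure equations of finite type, invoking exactly the Lie-algebroid/groupoid integration of the anchor $J$ from Equation~(\ref{kappaJ}) and the Cartan--K\"ahler machinery that you deploy. Your explicit final step --- reassembling the flat $\mathfrak{sp}(n)$-valued form $\mu$ and invoking condition~(3) of Lemma~\ref{embeddable} to certify that the realized structure is in fact \emph{embeddable} --- is a necessary detail the paper leaves implicit, and you correctly identify it as the point where the closedness of Theorems~\ref{thmembedstr} and~\ref{thmembedstr2} is actually consumed.
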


\begin{cor}[Uniqueness]
Suppose that $(\mathcal{B},M,p)_\Phi$ and $(\mathcal{B}',M',p')_\Phi$ are two 
$Od(G)$-structures such that $\kappa(b)=\kappa'(b')$ for some $b \in \mathcal{B}_p$ and $b' \in \mathcal{B}_{p'}$.
Then there exist neighborhoods $U$ of $p$ and $U'$ of $p'$ such that
$\mathcal{B}_U$ and $\mathcal{B}'_{U'}$ are isomorphic as $Od(G)$-structures.
\end{cor}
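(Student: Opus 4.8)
The plan is to apply the technique of the graph from Cartan's equivalence method, for which the Structure Theorems have already done the work. First I would observe that the pair $\eta=(\omega,\theta)$, with $\theta=\varphi-\tfrac12\lambda I$, is a global coframe on $\mathcal{B}$: the tautological form $\omega$ accounts for $\dim\mathcal{S}=m$ directions and the connection $\theta$ for the $\dim\mathfrak{g}=\tfrac12 n(n-1)+1$ vertical directions, and $m+\dim\mathfrak{g}=n^2=\dim\mathcal{B}$. By Theorems~\ref{thmembedstr} and~\ref{thmembedstr2}, the exterior derivatives $\mathrm{d}\omega$, $\mathrm{d}\theta$, and $\mathrm{d}\kappa$ are all expressed as universal algebraic functions of $\eta$ and of the structure function $\kappa:\mathcal{B}\to\mathbf{K}_\Phi$; in the notation of Equation~\eqref{kappaJ}, $\mathrm{d}\kappa=J(\kappa)\,\eta$. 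Thus the system carried by $(\eta,\kappa)$ is closed under $\mathrm{d}$, an identical closed system is carried by $(\eta',\kappa')$ on $\mathcal{B}'$, and this is exactly the input required by the coframe version of Cartan's equivalence theorem.

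I would then pass to the product $N=\mathcal{B}\times\mathcal{B}'$ with projections $\pi,\pi'$, introduce the rank-$n^2$ Pfaffian system $\mathcal{I}$ generated by the difference one-forms $\pi^*\eta-\pi'^*\eta'$, and restrict attention to the locus $Z=\{\kappa\circ\pi=\kappa'\circ\pi'\}$, which contains $(b,b')$ by hypothesis. The key computation is that on $Z$ the two sets of structure functions agree, so the universal coefficients appearing in Equation~\eqref{eqnembedstr} and in Theorem~\ref{thmembedstr2} coincide; substituting $\pi^*\eta=(\pi^*\eta-\pi'^*\eta')+\pi'^*\eta'$ into the structure equations then exhibits each $\mathrm{d}(\pi^*\eta-\pi'^*\eta')$ as a member of the algebraic ideal generated by $\mathcal{I}$. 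The pointwise hypothesis $\kappa(b)=\kappa'(b')$ suffices because of self-consistency: along any integral of $\mathcal{I}$ one has $\pi^*\eta=\pi'^*\eta'$, and then the equations $\mathrm{d}\kappa=J(\kappa)\eta$ show that $\kappa\circ\pi-\kappa'\circ\pi'$ solves a first-order system already solved by the constant $0$, so by uniqueness it stays $0$ and the integral manifold remains in $Z$. By Frobenius there is a unique maximal connected integral manifold $\Sigma$ through $(b,b')$, of dimension $n^2$.

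Because the generators $\pi^*\eta-\pi'^*\eta'$ vanish on $\Sigma$ while $\eta$ and $\eta'$ are coframes, $\Sigma$ projects as a local diffeomorphism onto each factor near $(b,b')$, so $\Sigma$ is the graph of a local diffeomorphism $\phi:\mathcal{B}\to\mathcal{B}'$ with $\phi(b)=b'$ and $\phi^*\omega'=\omega$, $\phi^*\theta'=\theta$. Finally I would verify that $\phi$ is an isomorphism of $Od(G)$-structures: pulling the tautological $\mathcal{S}$-valued form $\omega'$ back to $\omega$ forces $\phi$ to carry the vertical distribution $\ker\omega$ to $\ker\omega'$, so it descends to a diffeomorphism of bases sending $p$ to $p'$; matching the $\mathfrak{g}$-valued connections $\theta$ then makes it equivariant for the $G$-action those connections encode, so it intertwines the two reductions of Definition~\ref{maindef}. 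Restricting to neighborhoods $U$ of $p$ and $U'$ of $p'$ over which $\phi$ is defined gives the asserted isomorphism $\mathcal{B}_U\cong\mathcal{B}'_{U'}$.

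The main obstacle is the Frobenius step, specifically reconciling it with the singular character of the foliation in Lemma~\ref{leaves}. One must check that the ideal closure on $Z$ persists even where $\rank J$ jumps, so that $Z$ is a manifold near $(b,b')$ of the correct dimension and $\kappa\circ\pi$ maps $\Sigma$ into a single leaf $\mathcal{O}_J(K)$. This is precisely the integrability packaged by the Lie-algebroid structure cited after Equation~\eqref{kappaJ}, and it is what lets the pointwise hypothesis replace agreement of $\kappa$ together with all of its covariant derivatives.
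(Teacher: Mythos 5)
Your proposal is correct and is essentially the argument the paper relies on: the paper gives no explicit proof of this corollary, deferring to the standard theory of coframes of finite type (Cartan's technique of the graph, as in Appendix A of the cited Bryant reference), which is exactly what you carry out using the closed system $\mathrm{d}\eta=F(\kappa)(\eta\wedge\eta)$, $\mathrm{d}\kappa=J(\kappa)\eta$. The one obstacle you flag---applying Frobenius on the possibly singular locus $Z$---is resolved in the standard treatment by replacing $Z$ with the fiber product of $\kappa$ and $\kappa'$ over the common leaf $\mathcal{O}_J(K)$, which is a smooth manifold because both classifying maps are submersions onto that leaf, and on which the difference forms $\pi^*\eta-\pi'^*\eta'$ span a constant-rank Frobenius system whose $n^2$-dimensional integral manifold through $(b,b')$ is the desired graph.
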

Note that, \emph{a priori}, the local isomorphism only holds when comparing two structures using the
same $\Phi$; however, the law of inertia allows one to construct a bundle
isomorphism between $(\mathcal{B},M,p)_\Phi$ and
$(\mathcal{B}',M',p')_{\Phi'}$ as long as $\Phi$ and $\Phi'$ have the same
signature.   

Lemma~\ref{embeddable} shows that the existence and uniqueness theorems apply
to local non-degenerate Hessian PDEs, too:
\begin{cor}
For a fixed $\Phi$ and for any choice of $K \in
\mathbf{K}_\Phi$, there exists an analytic
function $F$ defined in a neighborhood of the origin of
$\Lambda^o$ such that the $Od(G)$-structure induced on
$F^{-1}(0)$ takes the chosen value of $K$ in the fiber over the origin.
The Taylor series of this function at the origin is uniquely determined by
the $G$-orbit of $K$.
\end{cor}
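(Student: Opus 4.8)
The plan is to read the statement off from the Existence and Uniqueness corollaries above, translating between $Od(G)$-structures and Hessian PDEs by Lemma~\ref{embeddable} and exploiting the transitivity of $CSp(n)$ on the Lagrangian Grassmannian. \emph{For existence}, given $K \in \mathbf{K}_\Phi$ I would invoke the Existence corollary to obtain an analytic embeddable $Od(G)$-structure $(\mathcal{B},M,p)_\Phi$ and a frame $b \in \mathcal{B}_p$ with $\kappa(b) = K$. Embeddability lets Lemma~\ref{embeddable} produce a local analytic inclusion $i : M \to \Lambda^o$ with $i(M) = F^{-1}(0)$ for some Hessian PDE $F$. The image $i(p)$ need not be the origin, but as discussed in Section~\ref{back} the group $CSp(n)$ acts transitively on the fiber $\Lambda = LG(n,2n)$ by contact transformations, under which the contact invariant $\kappa$ is unchanged; hence I would post-compose $i$ with the element of $CSp(n)$ carrying $i(p)$ to the zero matrix. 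This preserves $\kappa(b) = K$ and yields the required $F$ near $0 \in \Lambda^o$, the analyticity being inherited from the Cartan--K\"ahler construction behind the Existence corollary.

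\emph{For uniqueness of the Taylor series}, the key point is that $\mathcal{B}_p$ is a $G$-torsor and $\kappa$ is $G$-equivariant, so $\{\kappa(b') : b' \in \mathcal{B}_p\}$ is exactly the $G$-orbit of $K$; this orbit is therefore an intrinsic invariant of the pointed structure, independent of the frame. Two embeddable structures whose invariants lie in a common $G$-orbit thus admit frames on which $\kappa$ takes equal values, so by the Uniqueness corollary they are locally isomorphic as $Od(G)$-structures, and under Lemma~\ref{embeddable} the isomorphism is realized by a fiber-preserving element of $CSp(n)$ acting on $\Lambda^o$. Consequently the defining functions produced by the existence construction for points of the same $G$-orbit have $CSp(n)$-equivalent zero loci through the origin. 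Recalling from Section~\ref{back} that the $CSp(n)$-stabilizer of the origin restricts on the fiber to the linear action $Od(GL(n))$, whose $\Phi$-reduction to $Od(G)$-frames reproduces exactly the $G$-action on $\kappa$, I see that fixing the origin consumes the translational freedom in $CSp(n)$ while the residual isotropy is precisely what is collapsed in passing from $K$ to its orbit. Modulo this isotropy and the unavoidable rescaling of $F$ by a non-vanishing analytic factor, the Taylor series of $F$ at the origin is then determined.

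The step I expect to be the main obstacle is the bookkeeping in this last argument: one must check that the total ambiguity in the extrinsic presentation---the choice of defining function up to units together with the $CSp(n)$-stabilizer of the origin---matches exactly the $G$-isotropy quotiented out in forming the orbit of $K$. Once the identification of the origin-stabilizer $Od(G) \subset CSp(n)$ with the $G$-action on the classifying space $\mathbf{K}_\Phi$ is made precise, the two corollaries combine directly to give both existence and the claimed uniqueness of the Taylor series.
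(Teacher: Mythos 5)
Your proposal is correct and follows essentially the same route as the paper, which presents this corollary as an immediate consequence of combining the Existence and Uniqueness corollaries with Lemma~\ref{embeddable} (translating to the origin by the transitive $CSp(n)$-action and using the $G$-equivariance of $\kappa$ to pass to the $G$-orbit). Your explicit flagging of the residual ambiguities---rescaling $F$ by a unit and the origin-stabilizer in $CSp(n)$ matching the $G$-isotropy---is a faithful unpacking of what the paper leaves implicit rather than a departure from it.
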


This construction is essentially what is done in Section~6 of
\cite{Smith2009a} for several examples in the integrable case for $n=3$.
For any $n$, the flat structure yields the PDE
$0=\sum_{ij}\Phi_{ij} \frac{\partial^2 z}{\partial x^i \partial x^j}$.

\begin{defn}
$(\mathcal{B},M,p)_\Phi$ is said to ``represent $K$'' if $K = \kappa(b)$ for some
$b \in \mathcal{B}_p$.
\end{defn}


\begin{defn}[Leaf-equivalence]
$(\mathcal{B}_0,M_0,p_0)_\Phi$ and $(\mathcal{B}_k,M_k,p_k)_\Phi$ are said to be
\textbf{leaf-equivalent} if there exist finite sequences
$\{(\mathcal{B}_i,M_i,p_i)_\Phi\}$ and $\{K_i\}$
with $1 \leq i \leq k{-}1$ such that $(\mathcal{B}_i,M_i,p_i)_\Phi$ and
$(\mathcal{B}_{i+1},M_{i+1},p_{i+1})_\Phi$ both represent $K_i$ for $0 \leq i
\leq k{-}1$. 
\end{defn}

The term ``leaf-equivalence'' arises from the leaves of the singular foliation
of $\mathbf{K}_\Phi$ from Lemma~\ref{leaves}.  Again, a standard argument for
Lie pseudo-groups shows that these leaves separate all
possible $(\mathcal{B},M,p)_\Phi$'s into equivalence classes by the value of
$K$.   
\begin{thm}[Leaf-equivalence]
$(\mathcal{B},M,p)_\Phi$ and $(\hat{\mathcal{B}}, \hat M, \hat p)_\Phi$ are
leaf-equivalent if and only if $\mathcal{O}_J(\mathcal{B}) =
\mathcal{O}_J(\hat{\mathcal{B}})$.  Moreover, $\kappa:\mathcal{B} \to
\mathcal{O}_J(\mathcal{B})$ is a submersion.  \label{leafequiv} 
\end{thm}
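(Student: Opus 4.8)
The plan is to treat Theorem~\ref{leafequiv} as the standard conclusion of Cartan's theory of Lie pseudo-groups, following the template of Sections~4 and~5 of \cite{Smith2009a} and exploiting that the structure equations of Theorems~\ref{thmembedstr} and~\ref{thmembedstr2} are closed under $\mathrm{d}$. Two ingredients are already in hand: the singular foliation of $\mathbf{K}_\Phi$ by the leaves $\mathcal{O}_J(K)$ with $\mathbf{T}_K\mathcal{O}_J(K)=\mathop{\mathrm{range}}J(K)$ from Lemma~\ref{leaves}, and the first-order system of Equation~(\ref{kappaJ}), $\mathrm{d}\kappa_b=J(\kappa(b))\,(\omega,\theta)$.

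First I would dispatch the ``moreover'' clause. Since $(\omega,\theta)$ is a global coframe on $\mathcal{B}$---its two blocks account for the $m$ semibasic and the $\dim\mathfrak{g}$ vertical directions, totalling $n^2=\dim\mathcal{B}$---Equation~(\ref{kappaJ}) identifies the image of $\mathrm{d}\kappa_b$ with the column span of $J(\kappa(b))$, i.e.\ with $\mathop{\mathrm{range}}J(\kappa(b))=\mathbf{T}_{\kappa(b)}\mathcal{O}_J(\kappa(b))$. Thus $\mathrm{d}\kappa_b$ is everywhere tangent to the foliation and surjects onto the leaf's tangent space. The tangency, together with Lemma~\ref{leaves} and connectedness, shows that $\kappa(\mathcal{B})$ lies in a single leaf, which I would then name $\mathcal{O}_J(\mathcal{B})$; the surjectivity is exactly the submersion assertion, and since submersions are open maps, $\kappa(\mathcal{B})$ is an open subset of that leaf. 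A point worth recording here is that the $\theta$-columns of $J$ are precisely the infinitesimal $\mathfrak{g}$-action on $\mathbf{K}_\Phi$, so the vertical directions are already tangent to the foliation; this guarantees that $\kappa$ of a fibre stays in one leaf and reduces well-definedness of $\mathcal{O}_J(\mathcal{B})$ to connectedness of $M$.

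For the equivalence itself, the forward implication is essentially formal: if the structures are leaf-equivalent, then along the defining chain each consecutive pair represents a common $K_i$, so the two single leaves containing their $\kappa$-images share the point $K_i$ and hence coincide; transitivity along the chain yields $\mathcal{O}_J(\mathcal{B})=\mathcal{O}_J(\hat{\mathcal{B}})$. For the converse I would run a covering argument inside the common connected leaf $\mathcal{O}_J$: choose $K_0\in\kappa(\mathcal{B})$ and $K_*\in\kappa(\hat{\mathcal{B}})$, join them by a path in $\mathcal{O}_J$, and at each point of the path invoke the Existence corollary to produce a structure representing that value whose $\kappa$-image is open in the leaf. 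By compactness finitely many such images cover the path, consecutive images overlap and therefore share a represented value, and prepending $\mathcal{B}$ and appending $\hat{\mathcal{B}}$ produces the finite chain witnessing leaf-equivalence.

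The step I expect to be the main obstacle is this converse: converting the topological fact ``same connected leaf'' into the combinatorial fact ``finite chain of common representatives.'' This is where the Existence corollary and the openness of submersion images do the real work, and where I must be careful that the leaf---an immersed, possibly non-embedded submanifold---is genuinely path-connected so that the compactness covering applies. A secondary subtlety, flagged by the paper's warning that connectedness is essential, is Step~1's claim that $\kappa(\mathcal{B})$ meets only one leaf even though $G=CO(n,\Phi)$ may be disconnected; here the observation that the $\mathfrak{g}$-directions are tangent to the foliation, combined with $G$-equivariance of $\kappa$ and $G$-invariance of $J$, is what lets me pass between components of the fibre and conclude.
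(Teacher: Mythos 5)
The paper does not actually write out a proof of Theorem~\ref{leafequiv}: it is stated as one of the ``standard consequences of Cartan's structure theorem,'' with the reader referred to the Lie pseudo-group literature and to Sections~4--5 of \cite{Smith2009a}. So your write-up is necessarily more detailed than anything in the text, and in substance it is the intended standard argument. Your treatment of the ``moreover'' clause is exactly what the paper's own ingredients support: since $(\omega,\theta)$ is a global coframe on $\mathcal{B}$ (accounting for all $n^2=m+\dim\mathfrak{g}$ directions), Equation~(\ref{kappaJ}) identifies $\mathrm{d}\kappa_b(\mathbf{T}_b\mathcal{B})$ with $\mathop{\mathrm{range}}J(\kappa(b))=\mathbf{T}_{\kappa(b)}\mathcal{O}_J(\kappa(b))$, giving both tangency to the foliation of Lemma~\ref{leaves} and surjectivity onto the leaf; connectedness then makes $\mathcal{O}_J(\mathcal{B})$ well defined. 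The forward implication of the equivalence is formal, as you say. Your attention to the disconnectedness of $CO(n,\Phi)$ is a worthwhile refinement that the paper does not address.

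The one place where your argument does not literally close concerns the converse, and it traces back to the paper's definition of ``represents.'' That definition requires $K=\kappa(b)$ for $b$ in the fiber $\mathcal{B}_p$ over the \emph{marked} point, and $\kappa(\mathcal{B}_p)$ is a single $G$-orbit in $\mathbf{K}_\Phi$ (by $G$-equivariance of $\kappa$ on the fiber). Two marked structures therefore ``both represent $K_i$'' only if the $G$-orbits of their marked-fiber values coincide, and any finite chain preserves that $G$-orbit. Your covering argument, by contrast, produces overlaps of the \emph{full} images $\kappa(\mathcal{B}_{t_i})$, which are open in the leaf; extracting a shared value from such an overlap generally lands in fibers over points other than the marked ones, and re-marking cannot be chained under the literal definition because distinct fibers of one structure give distinct $G$-orbits. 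So either ``represents'' must be read at the level of the whole bundle ($K\in\kappa(\mathcal{B})$), in which case your proof goes through verbatim, or the marked-point definition must be relaxed. You should state explicitly which reading you adopt; with the bundle-level reading the rest of your converse (path-connectedness of the Stefan--Sussmann leaf, compactness, the Existence corollary supplying open images) is sound.
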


Because $\kappa$ is a submersion, the leaves have dimension at most $\dim
\mathcal{B} = n^2$, which is much smaller than $\dim \mathbf{K}_\Phi$.
Identifying these leaves explicitly is potentially an extremely challenging
task, but it is ultimately the way towards a thorough understanding of
non-degenerate Hessian PDEs.  For the special case of integrable hyperbolic
Hessian PDEs in three variables, this was accomplished due to a small miracle
(Lemma 5.1 of \cite{Smith2009a}).

\subsection{The Case of Three Variables}\label{3}
Consider the hyperbolic signature $(2,1)$ for $n=3$.  The base manifold $M$ has dimension 5
and $\Lambda^o$ has dimension 6.  If $\Phi$ is chosen to be
\[ 
\Phi = 
\begin{pmatrix}
0 & 0 & \textstyle{-\frac12} \\
0 & 1 & 0 \\
\textstyle{-\frac12} & 0 & 0
\end{pmatrix},
\]
then $\mathcal{S}$ is the vector space of matrices of the form
\[ 
\begin{pmatrix}
a_{-4} & a_{-2} & a_{ 0} \\
a_{-2} & a_{ 0} & a_{2} \\
a_{0} & a_{ 2} & a_{ 4}
\end{pmatrix}, a_{-4}, a_{-2}, a_{0}, a_{2}, a_{4} \in \mathbb{R}.\]
The Lie algebra $\mathfrak{so}(n,\Phi)$ is isomorphic to
$\mathfrak{sl}_2(\mathbb{R})$.  The finite-dimensional irreducible
representations of $\mathfrak{sl}_2(\mathbb{R})$ are given by the action of
$-x \frac{\partial}{\partial y}$, $y \frac{\partial}{\partial x}$, and
$x\frac{\partial}{\partial x} - y\frac{\partial}{\partial y}$ on
$\mathcal{V}_r$, the vector space of degree-$r$ homogeneous polynomials in $x$
and $y$.   
These and the scaling action correspond to the generators of the Lie algebra $od(\mathfrak{g})$.
\begin{equation}
\begin{pmatrix}
0 & 0 & 0 \\
-4 & 0 & 0 \\
0 & -2 & 0 
\end{pmatrix},
\begin{pmatrix}
0 & 2 & 0 \\
0 & 0 & 4 \\
0 & 0 & 0 
\end{pmatrix},
\begin{pmatrix}
4 & 0 & 0 \\
0 & 0 & 0 \\
0 & 0 & -4 
\end{pmatrix},
\begin{pmatrix}
-1/2 & 0 & 0 \\
0 & -1/2 & 0 \\
0 & 0 & -1/2 
\end{pmatrix}.
\end{equation}

The intersection of the Veronese cone with $\mathcal{S}$ is all matrices of
the form 
\[
\begin{pmatrix}
s^4 & s^3t & s^2t^2 \\
s^3t & t^2s^2 & st^3 \\
s^2t^2 & st^3 & t^4
\end{pmatrix}, s,t \in \mathbb{R}.\]
It is easy to see that if $\mathcal{S}$ is identified with
$\mathcal{V}_4 = \{ a_{-4} x^4 +  a_{-2} 4x^3y + a_{0} 6x^2y^2 + a_{2} 4xy^3 + a_{4} y^4 \}$,
then the intersection of the Veronese cone with $\mathcal{S}$ is the
rational normal cone $\{ (sx+ty)^4: s,t\in\mathbb{R} \}$, which has symmetry group $GL(2)$.
Hence, a hyperbolic $Od(G)$-structure in $n=3$ variables corresponds to a
$GL(2)$-structure of degree 4, as seen in \cite{Ferapontov2009} and \cite{Smith2009a}.
The identification of $\mathcal{S} = \mathcal{V}_4$ above is precisely the
identification that was used in \cite{Smith2009a} to reconstruct integrable
Hessian PDEs from the structure equations for 2,3-integrable
$GL(2)$-structures.

So, examining the structure theorem, we arrive at the following irreducible decompositions:
\begin{equation}
\begin{split}
\mathfrak{g} &\cong \mathcal{V}_0 \oplus \mathcal{V}_2\\ 
\mathcal{Q}_0 &\cong \mathcal{V}_0\\
\mathcal{Q}_1 &\cong \mathcal{V}_4\\
\mathcal{Q}_2 &\cong \mathcal{V}_8\\
\mathcal{Q}_{-}&\cong \mathcal{V}_2 \oplus \mathcal{V}_6\\
\mathbf{K}_\Phi &\cong (\mathcal{V}_8) \oplus (\mathcal{V}_4 \oplus \mathcal{V}_8)
\oplus (\mathcal{V}_2 \oplus \mathcal{V}_6) \oplus \mathcal{V}_0 \oplus
\mathcal{V}_4 \cong \mathbb{R}^{39}
\end{split}
\end{equation}

\begin{thm}
For a hyperbolic $Od(G)$-structures in $n=3$ variables or equivalently
for $GL(2)$-structures of degree 4, the condition of 2-integrability  (see
Section~\ref{int}) is equivalent to the condition of embeddability.
\end{thm}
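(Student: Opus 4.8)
The plan is to reduce both the geometric condition of $2$-integrability and the condition of embeddability to $O(2,1)$-equivariant linear conditions on the intrinsic torsion of the abstract $Od(G)$-structure, and then to show that the two conditions cut out the same submodule. For a general (not necessarily embeddable) $Od(G)$-structure with $n=3$, the essential torsion is valued in $H^{0,2}(\mathfrak{g}) = \operatorname{coker}\delta$, where $\delta:\mathfrak{g}\otimes\mathcal{S}^* \to \mathcal{S}\otimes(\mathcal{S}^*\wedge\mathcal{S}^*)$ is the skewing map of the exact sequence~(\ref{sequence1}). First I would compute this cokernel using the Clebsch--Gordan decompositions of Section~\ref{3}: since $\mathcal{S}\cong\mathcal{V}_4$ and $\mathcal{S}^*\wedge\mathcal{S}^*\cong\mathcal{V}_6\oplus\mathcal{V}_2$, the torsion space is $\mathcal{V}_4\otimes(\mathcal{V}_6\oplus\mathcal{V}_2)\cong\mathcal{V}_{10}\oplus\mathcal{V}_8\oplus 2\mathcal{V}_6\oplus 2\mathcal{V}_4\oplus 2\mathcal{V}_2$, while $\mathfrak{g}\otimes\mathcal{S}^*\cong\mathcal{V}_6\oplus 2\mathcal{V}_4\oplus\mathcal{V}_2$ injects under $\delta$ (Lemma~\ref{kerdeltabar}). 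Since the map is equivariant and meets each isotypic component in full, cancelling the image multiplicities yields the multiplicity-free module $H^{0,2}(\mathfrak{g})\cong\mathcal{V}_{10}\oplus\mathcal{V}_8\oplus\mathcal{V}_6\oplus\mathcal{V}_2$ of dimension $30$.

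Next I would pin down embeddability in these terms. By Lemma~\ref{embeddabletorsion} and the First Fundamental Lemma, the normalized essential torsion of an embeddable structure lies in $\mathcal{T}'\cong\mathcal{Q}_2\cong\mathcal{V}_8$, so embeddability forces the vanishing of the complementary summand $\mathcal{V}_{10}\oplus\mathcal{V}_6\oplus\mathcal{V}_2$. For the converse I would invoke Lemma~\ref{embeddable}(3): given a structure whose essential torsion sits in $\mathcal{V}_8$, the First Fundamental Lemma normalizes $\theta$ and $\tau=P(\omega)$, and the Second Fundamental Lemma supplies the symmetric block $\gamma$ realizing the final Maurer--Cartan equation of~(\ref{mc}); assembling the block form $\mu$ of~(\ref{mc}) with $\alpha=\omega$ and $\beta=\theta+\tau$ then gives a flat $\mathfrak{sp}(3)$-valued form, hence a local embedding. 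Thus \emph{embeddability is equivalent to the vanishing of the $\mathcal{V}_{10}\oplus\mathcal{V}_6\oplus\mathcal{V}_2$ part of the intrinsic torsion}.

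The heart of the argument is to show that $2$-integrability selects the same submodule. Using the definition from Section~\ref{int}, I would set up the exterior differential system whose integral manifolds are the $2$-component hydrodynamic reductions (the integral surfaces tangent to the rational-normal locus of the $GL(2)$-structure), and apply the Cartan--K\"ahler theorem to the abstract structure equations~(\ref{eqnembedstr}). The obstruction to the existence of the expected-dimensional family of reductions is then a torsion expression that is $O(2,1)$-equivariant and linear in the essential torsion, so by multiplicity-freeness of $H^{0,2}(\mathfrak{g})$ and Schur's lemma it must be the projection onto a sub-sum of $\{\mathcal{V}_{10},\mathcal{V}_8,\mathcal{V}_6,\mathcal{V}_2\}$. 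It then suffices to decide, summand by summand, whether a nonzero component obstructs involutivity: the $\mathcal{V}_8$ model is embeddable and therefore $2$-integrable, while a highest-weight torsion in each of $\mathcal{V}_{10}$, $\mathcal{V}_6$, and $\mathcal{V}_2$ must be shown to destroy the reduction system. Matching the two sub-sums gives the theorem.

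The main obstacle is this final involutivity computation: formulating the reduction system intrinsically on $\mathcal{B}$ and evaluating its Cartan characters and torsion finely enough to detect each of the three ``bad'' summands separately. This is exactly the point where the special arithmetic of $n=3$ enters --- the ``small miracle'' of Lemma 5.1 of \cite{Smith2009a} --- and I would expect to carry it out using the explicit $\mathcal{V}_r$ models and the degree-$4$ $GL(2)$-structure description, rather than by any argument valid in general position for all $n$.
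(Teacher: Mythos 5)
Your reduction is structurally the same as the paper's, and your module arithmetic all checks out: $\mathcal{S}\otimes(\mathcal{S}^*\wedge\mathcal{S}^*)\cong\mathcal{V}_4\otimes(\mathcal{V}_6\oplus\mathcal{V}_2)$ has dimension $50$, $\mathfrak{g}\otimes\mathcal{S}^*\cong\mathcal{V}_6\oplus2\mathcal{V}_4\oplus\mathcal{V}_2$ injects by Lemma~\ref{kerdeltabar}, and the multiplicity-free cokernel $\mathcal{V}_{10}\oplus\mathcal{V}_8\oplus\mathcal{V}_6\oplus\mathcal{V}_2$ is correct. The difference is that the paper's entire proof is a citation: \cite{Smith2009a} already establishes that a generic degree-$4$ $GL(2)$-structure is $2$-integrable if and only if its torsion lies in $\mathcal{V}_8$, and the theorem then follows by comparison with Lemma~\ref{fullytraceless}, which identifies $\mathcal{T}'\cong\mathcal{Q}_2\cong\mathcal{V}_8$ as the essential torsion of an embeddable structure. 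What you propose in your third paragraph --- setting up the $2$-secant exterior differential system on a general, not-necessarily-embeddable structure and checking summand by summand that $\mathcal{V}_{10}$, $\mathcal{V}_6$, and $\mathcal{V}_2$ each obstruct involutivity --- is precisely the content that the citation supplies. You have correctly identified it as the heart of the matter, but you have not carried it out, so as written your argument proves nothing beyond what the citation already gives; it is a (sound) plan for reproving the result of \cite{Smith2009a} rather than a proof.

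There is also a circularity in your sketched converse for the embeddability characterization. You invoke the Second Fundamental Lemma to ``supply the symmetric block $\gamma$,'' but that lemma is proved \emph{assuming} the structure is embeddable (it pulls $\gamma$ back along an existing embedding $h$). To show that torsion valued in $\mathcal{V}_8$ implies local embeddability you would instead have to construct $\gamma$ from scratch and verify that the assembled $\mathfrak{sp}(3)$-valued form $\mu$ satisfies $\mathrm{d}\mu+\mu\wedge\mu=0$, which requires checking the second-order Bianchi-type identities --- this is not automatic from the first-order normalization. Note, for what it is worth, that the paper's own two-line proof is equally silent on this direction and leans on \cite{Smith2009a} for it; but since you are proposing a self-contained argument, the gap is yours to close.
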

\begin{proof}
In \cite{Smith2009a}, it is shown that a generic $GL(2)$-structure of degree 4
is 2-integrable if and only if its torsion only takes values in the
irreducible representation $\mathcal{V}_8$.  Compare to
Lemma~\ref{fullytraceless}.
\end{proof}

Thus, the theory of embeddable hyperbolic $Od(G)$-structures in $n=3$
variables is equivalent to the study of 2-integrable $GL(2)$-structures of
degree 4.  See Corollary~3.2 of \cite{Smith2009a}, where structure equations
appear that are equivalent to Theorem~\ref{thmembedstr} above, albeit with a
different collection of projections.
To see the relationship between the projections, note that 
the decomposition $V \otimes V = \mathbb{R}\Phi^{-1} + \mathfrak{so}(3) +
\mathcal{S}$ corresponds to the decomposition $\mathcal{V}_2 \otimes
\mathcal{V}_2 = \mathcal{V}_0 \oplus \mathcal{V}_2 \oplus \mathcal{V}_4$.
The coefficients of the Clebsch--Gordan pairings for $SL(2)$ $\pair{ v, w}_0$, $\pair{v ,w}_1$, and
$\pair{v,w}_2$ are scalar multiples of the 
coefficients of $\pi_\mathcal{R}(v^{\top} w)$, $\pi_{\mathfrak{so}(3)}(v^{\top} w)$ and
$\pi_{\mathbb{R}\Phi^{-1}}(v^{\top}w)$, respectively.

The leaves in $\mathbf{K}_\Phi$ can have dimension no greater than $3^2=9$.
In the 2,3-integrable case, all of the second-order invariants become functions of
$P$, and there are indeed leaves in $\mathcal{V}_8 = \mathcal{Q}_2$ of maximum
dimension nine.

\subsection{The Case of Four Variables}\label{4}
The Lie algebra $\mathfrak{so}(3,1)$ is isomorphic to
$\mathfrak{sl}_2(\mathbb{R}) \times \mathfrak{sl}_2(\mathbb{R})$, and the
finite-dimensional representations are given by products of homogeneous polynomials 
$\mathcal{V}_{p,q} = \{ f(x,y)g(x',y'), \deg f = p, \deg g = q\}$, which has
dimension $(p+1)(q+1)$.  
The standard representation on $V$ is denoted by $\mathcal{V}_{1,1}$, and the 
$Od$ representation on $\mathcal{S}$ is $\mathcal{V}_{2,2}$.
Using the well-known decomposition for these representations (see
\cite{Cartan1981, Gelfand1958}) we can compute
\begin{equation}
\begin{split}
\mathfrak{g} &\cong \mathcal{V}_{0,0} \oplus \mathcal{V}_{0,2} \oplus \mathcal{V}_{2,0}\\
\mathcal{Q}_{0} &\cong \mathcal{V}_{0,0}\\
\mathcal{Q}_{1} &\cong \mathcal{V}_{2,2} \\
\mathcal{Q}_{2} &\cong \mathcal{V}_{0,4} \oplus \mathcal{V}_{4,0} \oplus \mathcal{V}_{4,4}\\
\mathcal{Q}_{-} &\cong \mathcal{V}_{2,0} \oplus \mathcal{V}_{0,2} \oplus
\mathcal{V}_{2,4} \oplus \mathcal{V}_{4,2}\\
\mathbf{K}_\Phi &\cong \mathbb{R}^{90}.
\end{split}
\end{equation}
The leaves in $\mathbf{K}_\Phi$ can have dimension no greater than $4^2=16$,
but $P$ takes values in a sum of irreducible representations of dimensions 
five, five, and 25.

\section{Secant submanifolds and Hydrodynamic Integrability}
\label{int}
When studying PDEs, one often considers the question of integrability; that
is, when can one construct ``many'' exact solutions making clever use of
characteristics or conservation laws?
This section is a summary of the approach that has been championed by
Tsarev, Ferapontov and their many collaborators over the past two decades 
\cite{Tsarev1990,Tsarev1993, Tsarev2000, Ferapontov2002,
Ferapontov2003, Ferapontov2004, Burovskiy2008, Ferapontov2009, Doubrov2009}.
(In fact, this article is the result of an effort to understand their approach
to integrability.  It is a happy accident that this effort lead to broader statements about general,
non-integrable PDEs.)  This section exists simply to 
demonstrate that $Od(G)$-structures provide a convenient geometric framework to
investigate integrability; Part II will be dedicated to that investigation.

For scalar PDEs in three or more variables, a popular notion of integrability
seems to be tied to the existence of hydrodynamic reductions.

\begin{defn}[Hydrodynamic Reduction]
Consider a PDE $F^{-1}(0) \subset J^2(\mathbb{R}^n,\mathbb{R})$.  A $k$-parameter hydrodynamic
reduction for $F^{-1}(0)$ is a pair of maps $(R,Z)$ of
the form
\begin{equation}
\mathbb{R}^n \overset{R}\longrightarrow \mathbb{R}^k
\overset{Z}\longrightarrow J^2(\mathbb{R}^n,\mathbb{R})
\end{equation}
such that 
\begin{enumerate}
\item $R$ is a submersion, and $Z$ is an immersion;
\item $N=Z(\mathbb{R}^k)$ is a $k$-dimensional submanifold of $F^{-1}(0)$ and
is an integral of the contact system on $J^2$;
\item for $l=1,\ldots, k$ there exist functions $\lambda^l:\mathbb{R}^k \to \mathbb{R}^n$ such that 
$\frac{\partial}{\partial x^i}R^l = \lambda^l_i(R) \frac{\partial}{\partial
x^1}R^l$; 
\item there exist $\Gamma^l_b:\mathbb{R}^n \to \mathbb{R}$ such that 
$\frac{\partial}{\partial R^b}\lambda^l_i = (\lambda^l_i
-\lambda^b_i)\Gamma^l_b$.
\end{enumerate}
\end{defn}

This definition is built upon the notion of constructing systems of
conservation laws that foliate the hypersurface $F^{-1}(0) \subset J^2$.
Condition (1) is a simple non-degeneracy assumption, for if $R$ were not a
submersion, then one would simply reduce the dimension $k$ to match the image
of $R$.  Condition (2) essentially means that $N$ can be treated as an
intermediate solution of $F$.  Conditions (3) and (4) may seem cumbersome, but they are perfect for reducing
$F=0$ to a system of coupled first-order PDEs in the $\lambda$'s.  In fact,
conditions (3) and (4) are familiar from the definition of systems of
conservation laws in $(1{+}1)$ variables that are rich or semi-Hamiltonian.   The
vectors $\{ \lambda^1, \ldots \lambda^k\}$ can be interpreted as the
characteristic speeds of a traveling wave within $F^{-1}(0)$.  When
these reductions exist, the reduced systems can be used to construct a
solution to the original equation using the generalized hodograph method
\cite{Tsarev1990}.  If this can be done in ``many'' ways, then the PDE is called
integrable.

\begin{defn}[Integrability for PDEs]
A PDE $F=0$ in $n\geq 3$ independent variables is integrable if, for all 
$k=1, \ldots, n$, there are infinitely many $k$-parameter hydrodynamic reductions of $F=0$,
and this collection is parametrized by ${k(n-2)}$ functions of one variable.
\end{defn}
In the references cited above, the number $k(n-2)$ is expected to be the
maximal possible Cartan character in the generic case.  This parametrization
will be discussed in greater detail in Part II; a more geometric definition is
provided below.

Of course, these definitions can be extended in sensible ways for PDEs of
higher order or in more dependent variables, but we focus on Hessian scalar
PDEs here.  In this case, the map $Z$ of a hydrodynamic reduction takes values
in $N = Z(\mathbb{R}^k) \subset F^{-1}(0) \subset \Lambda^o$.  Recall that
$\mathrm{d}U$ is a flat coframing on $\Lambda^o$ from Section~\ref{back}.

\begin{lemma}[e.g., \cite{Ferapontov2009}]
Suppose $F=0$ is a Hessian PDE in $n \geq 3$ variables such that
$\mathrm{d}F_U
= \sum_{ij} \Phi(U)_{ij} a(U)_{ij}$ for a flat $(V\odot V)$-valued coframe
$a=a^{\top}$ on $\Lambda^o$.
Let $(R,Z)$ be a $k$-parameter hydrodynamic reduction of $F=0$ with
$N=Z(\mathbb{R}^k)$.
Then $\mathbf{T}N$ is everywhere spanned by $k$ tangent vectors $\{A^1,
\ldots, A^k\}$ such that $a(A^j)$ lies in the intersection of the hyperplane
$\Phi(U)^\perp \subset V \odot V$ with the Veronese cone in $V \odot V$.
\label{obs}
\end{lemma}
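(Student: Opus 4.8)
The plan is to parametrize $N$ by the Riemann invariants $R=(R^1,\dots,R^k)$ through the immersion $Z$, so that $\mathbf{T}N$ is spanned by the $k$ vectors $A^l:=\partial Z/\partial R^l$, and to identify $a(A^l)$ with the symmetric matrix $\partial U/\partial R^l$ via the flat coframe $a_{ij}=\mathrm dU_{ij}$ (so that $a$ acts as the identity on $V\odot V$). For each $l$ I must show two things: that $a(A^l)$ is $\Phi$-traceless, i.e. lies in $\Phi(U)^\perp$, and that it is rank one, i.e. lies on $\ver_2(V)$. Together these place $a(A^l)$ in $\ver_2(V)\cap\Phi(U)^\perp=\ver_2(\mathcal N)$, which is the assertion.

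The first fact is immediate from condition (2) and requires nothing about the characteristic data. Since $N\subset F^{-1}(0)$, every tangent vector satisfies $\mathrm dF(A^l)=0$; and because $\mathrm dF_U=\sum_{ij}\Phi_{ij}\,a_{ij}$, this reads $\sum_{ij}\Phi_{ij}\,a(A^l)_{ij}=\tr(\Phi\,a(A^l))=0$, so $a(A^l)\in\Phi(U)^\perp$.

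The rank-one assertion is the heart of the lemma, and it is where conditions (3)--(4) enter. First I would differentiate the contact relation: on $N$ the form $\mathrm dp_i-\sum_jU_{ij}\,\mathrm dx^j$ vanishes, so applying $\mathrm d$ gives $\sum_j\mathrm dU_{ij}\wedge\mathrm dx^j=0$; evaluating on a pair $(\partial_{R^l},\partial_{R^m})$ yields the symmetry $A^l\xi^m=A^m\xi^l$, where $\xi^l$ is the $x$-component of $\partial Z/\partial R^l$. Next I would feed in condition (3): the $x$-gradient of each $R^l$ is proportional to $\lambda^l=(\lambda^l_1,\dots,\lambda^l_n)$, so the only characteristic covectors available to the reduction are the $\lambda^l$. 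Writing the Hessian in the form $U_{jk}=\sum_m\rho^m\lambda^m_j\lambda^m_k$ and computing the fully symmetric third jet $W_{ijk}=\partial^3z/\partial x^i\partial x^j\partial x^k$, the semi-Hamiltonian relation (4), namely $\partial_{R^b}\lambda^l_i=(\lambda^l_i-\lambda^b_i)\Gamma^l_b$, is exactly what forces the rank-two cross terms $\lambda^b\odot\lambda^m$ with $b\neq m$ to cancel against the derivatives of the coefficients $\rho^m$, leaving $W_{ijk}=\sum_l c^l\,\lambda^l_i\lambda^l_j\lambda^l_k$. Consequently each $\partial U/\partial x^i=\sum_l(c^l\lambda^l_i)\,(\lambda^l)^{\top}\lambda^l$ lies in $\mathrm{span}\{(\lambda^l)^{\top}\lambda^l\}$; since these vectors span the $k$-dimensional $\mathbf{T}N$, the tangent space is exactly $\mathrm{span}\{(\lambda^l)^{\top}\lambda^l:l=1,\dots,k\}$, and I may take $a(A^l)=(\lambda^l)^{\top}\lambda^l=\ver_2(\lambda^l)$, which is rank one.

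Combining the two parts closes the argument: each $a(A^l)=\ver_2(\lambda^l)$ is rank one, and by the first part it is $\Phi$-traceless, so $\lambda^l\Phi(\lambda^l)^{\top}=0$; that is, the characteristic covectors are automatically $\Phi$-null, and $a(A^l)\in\ver_2(\mathcal N)$ as claimed. The main obstacle is the third paragraph, the cancellation establishing $W_{ijk}=\sum_l c^l\,\lambda^l_i\lambda^l_j\lambda^l_k$: one must carefully manage the interplay between $R$-derivatives and $x$-derivatives (the scalars $\partial_{x^1}R^l$ are not functions of $R$ alone), and it is precisely condition (4), together with the full symmetry of the third jet and the functional freedom in the family of reductions guaranteed by integrability, that makes the higher-rank contributions drop out. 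Everything else is bookkeeping with the flat coframe and the contact system.
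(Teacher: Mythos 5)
Your first half is fine: tangency of $N$ to $F^{-1}(0)$ gives $\tr\bigl(\Phi\,a(A^l)\bigr)=0$, and combining rank-one-ness with that trace condition forces $\lambda^l\Phi(\lambda^l)^{\top}=0$, so the characteristic covectors are $\Phi$-null. The gap is in the rank-one step, which you yourself flag as ``the main obstacle'' and do not actually carry out. Two concrete problems. First, the ansatz $U_{jk}=\sum_m\rho^m\lambda^m_j\lambda^m_k$ is not available: it is neither part of the definition of a hydrodynamic reduction nor a consequence of anything established at that point, and it cannot be recovered by integrating the yet-to-be-proven identity $\partial Z_{jk}/\partial R^l=c^l\lambda^l_j\lambda^l_k$, since the $\lambda^l$ depend on $R$. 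Starting from a rank-one decomposition of the Hessian itself makes the argument circular in spirit. Second, condition (4) is not what produces the rank-one property; it plays no role in this lemma at all (it governs the consistency and abundance of reductions, not the shape of $\mathbf{T}N$), so organizing the proof around a cancellation driven by (4) is the wrong mechanism.

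The paper's proof is much shorter and uses only condition (3) together with the equality of mixed third partials (your ``differentiate the contact relation'' observation, pushed one index further). From $\partial Z_{ij}/\partial x^1=\partial Z_{1i}/\partial x^j$, the chain rule and the substitution $\partial R^l/\partial x^j=\lambda^l_j\,\partial R^l/\partial x^1$ give
\[
\sum_l\Bigl(\frac{\partial Z_{ij}}{\partial R^l}-\lambda^l_j\frac{\partial Z_{1i}}{\partial R^l}\Bigr)\frac{\partial R^l}{\partial x^1}=0,
\]
and since the bracketed coefficients are functions of $R$ alone while the $\partial R^l/\partial x^1$ vary independently over the family of solutions of the reduced system, each coefficient vanishes. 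Iterating, using the symmetry of $Z_{ij}$ and then setting $j=1$, yields $\partial Z_{ij}/\partial R^l=(\partial Z_{11}/\partial R^l)\,\lambda^l_i\lambda^l_j$; that is, each coordinate vector $\partial/\partial R^l$ pushes forward to a scalar multiple of $\ver_2(\lambda^l)$, which is the entire rank-one argument. No decomposition of $U$ and no appeal to condition (4) is needed. To repair your third paragraph, drop the ansatz on $U$ and run the mixed-partials computation directly on the derivatives $\partial Z_{ij}/\partial R^l$.
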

\begin{proof}
Following
\cite{Ferapontov2009}, we observe
that (if $Z_{ij}$ is to be the Hessian matrix of a smooth function $z$) 
\begin{equation} 
\frac{\partial Z_{ij}}{\partial x^k} = \frac{\partial Z_{ik}}{\partial x^j} =
\frac{\partial Z_{jk}}{\partial x^i}, \end{equation}
which implies that 
\begin{equation} 
0= \frac{\partial Z_{ij}}{\partial x^1} -
\frac{\partial Z_{1i}}{\partial x^j} 
=
\sum_l \frac{\partial Z_{ij}}{\partial R^l} \frac{\partial R^l}{\partial x^1}-
\sum_l \frac{\partial Z_{1i}}{\partial R^l} \frac{\partial R^l}{\partial x^j} 
=
\sum_l \left( \frac{\partial Z_{ij}}{\partial R^l} -
\frac{\partial Z_{1j}}{\partial R^l} \lambda^l_j\right) 
\frac{\partial R^l}{\partial x^1}.
\end{equation}
Therefore, because $R(x)$ is not constant, we have
\begin{equation}
\frac{\partial}{\partial R^l} \lhk \mathrm{d}Z_{ij} = 
\frac{\partial Z_{ij}}{\partial R^l} = \frac{\partial Z_{11}}{\partial R^l}
\lambda^l_i \lambda^l_j.
\end{equation}
Thus, the image of $\mathrm{d}Z$ is a rank-one symmetric matrix in a flat
coframe.

Let $v^l =\lambda^l \circ Z^{-1}$ for $l=1,\ldots,k$ and $A^l = (v^l)^{\top}
v^l$.
\end{proof}

Thus, hydrodynamic reductions and integrability are intimately tied to the
Veronese variety and its intersection with $\mathbf{T}(F^{-1}(0)) = \ker
\mathrm{d}F$.  With this observation in mind, there are obvious analogous
notions for $Od(G)$-structures.  These definitions are designed to admit
analysis using the Cartan--K\"ahler theorem.

\begin{defn}[Secants]
Suppose $\mathcal{B} \to M$ is an $Od(G)$-structure (not necessarily embeddable).
A $k$-dimensional subspace $E^k \subset \mathbf{T}_p M$ is
called $k$-secant if there exists $b \in \mathcal{B}_p$ such that $b(E) \subset
\mathcal{S}$ is the span of $\{A^1, \ldots, A^k\}$ such that
each $A^l$ is a symmetric rank-one matrix.  That is, $A^l = \ver_2(v^l ) =
(v^l)^{\top}(v^l)$ and $v^l \Phi (v^l)^{\top} = 0$.  A $k$-dimensional
submanifold $N \subset M$ is called $k$-secant if the sub-tangent space
$\mathbf{T}_{p} N$ is $k$-secant for all $p \in N$.
\end{defn}
Since the null vectors $v^1, \ldots, v^k$ are independent, any $k$-secant
subspace $E$ contains exactly $k$ distinct $(k{-}1)$-secant subspaces.  

\begin{defn}[Integrability for Structures]
An $Od(G)$-structure $B \to M$ is called $k$-integrable if, for any 
$k$-secant subspace $E \subset \mathbf{T}_p M$, there exists a $k$-secant
submanifold $N$ with $\mathbf{T}_p N = E$.
If $B\to M$ is, for example, 2-integrable and 3-integrable, this property is
abbreviated as 2,3-integrable. ``Integrable'' is shorthand for
``$2,\ldots,n$-integrable.''
\label{defint}
\end{defn}

Note that 1-integrability always holds, as it describes the existence of
the flow of a vector field. 

\begin{thm}[2-Integrability]
For any $n$, every hyperbolic, embeddable $Od(G)$-structure $\mathcal{B} \to M$ is 2-integrable.
In this case, The solution 2-secant submanifolds are locally parametrized by
$2(n-2)$ functions of one variable.  This is true in the smooth category.
\label{2integrable}
\end{thm}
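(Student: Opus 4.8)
The plan is to exhibit the $2$-secant submanifolds as projections of integral surfaces of an exterior differential system and to run Cartan's test, with embeddability entering at exactly one point in the torsion. First I would pass to $\hat{\mathcal{B}} = \mathcal{B}\times_M\mathcal{C}_2$, where $\mathcal{C}_2 \to M$ is the bundle whose fiber is the set of ordered pairs $([v^1],[v^2])$ of distinct directions on the projectivized $\Phi$-null quadric $Q$ (so $\dim Q = n-2$ and the fiber has dimension $2(n-2)$). On $\hat{\mathcal{B}}$ the tautological form $\omega$ and the rank-one null matrices $A^l = (v^l)^{\top} v^l$ are defined; set $\mathcal{W} = \mathrm{span}(A^1,A^2) \subset \mathcal{S}$. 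Let $\mathcal{I}$ be the Pfaffian system generated by the $m-2$ components of $\omega$ annihilating $\mathcal{W}$, with independence condition that the two components $\omega^1,\omega^2$ of $\omega$ along $A^1,A^2$ are independent. A $2$-secant submanifold through a prescribed $2$-secant plane $E$ is precisely the projection to $M$ of an integral surface on which $\omega = \omega^1 A^1 + \omega^2 A^2$ with $\omega^1\wedge\omega^2 \neq 0$.

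The heart of the argument is the torsion computation. Substituting $\omega = \omega^1 A^1 + \omega^2 A^2$ into the first structure equation of Theorem~\ref{thmembedstr}, the connection contributions $od_\theta(A^l)$ and the admissible variations $\mathrm{d}A^l$ (with each $v^l$ constrained to $\mathcal{N}$) all lie in the null-cone tangent spaces $\mathbf{T}_{A^l}\ver_2(\mathcal{N})$, since $\mathfrak{so}(n,\Phi)$ and the scalings preserve $\ver_2(\mathcal{N})$. Hence, modulo these, the obstruction to an integral element is the essential torsion $2\,\bar\delta(P)(A^1,A^2)\,\omega^1\wedge\omega^2$, and I must check that $\bar\delta(P)(A^1,A^2) \in \mathbf{T}_{A^1}\ver_2(\mathcal{N}) + \mathbf{T}_{A^2}\ver_2(\mathcal{N})$. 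Writing $u_2 = v^2 P(A^1)$ and $u_1 = v^1 P(A^2)$, each summand $\tfrac12\left(u_l^{\top} v^l + (v^l)^{\top} u_l\right)$ of $\bar\delta(P)(A^1,A^2)$ is tangent to the \emph{full} Veronese cone at $A^l$, and its part transverse to $\mathbf{T}_{A^l}\ver_2(\mathcal{N})$ is $u_l\Phi(v^l)^{\top} = n\,E(P)(\,\cdot\,,\,\cdot\,)$ times a defect vector $D_l$. Here hyperbolicity is used: for Lorentzian $\Phi$, distinct null directions satisfy $q := v^1\Phi(v^2)^{\top} \neq 0$, so I may take the complementary vectors to be $\bar v^1 = v^2/q$ and $\bar v^2 = v^1/q$; then $D_1 = D_2 = \tfrac{1}{2q}\left((v^1)^{\top} v^2 + (v^2)^{\top} v^1\right)$. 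The two transverse defects therefore cancel exactly when $E(P)(A^1,A^2) = E(P)(A^2,A^1)$. By embeddability the normalized torsion $P$ is valued in $\mathcal{T}'$, so $E(P)$ is symmetric (Lemmas~\ref{embeddabletorsion}, \ref{PR}, and \ref{fullytraceless}); thus the transverse defect vanishes and the torsion is completely absorbable.

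With the torsion absorbed, I would identify the reduced tableau, whose free data are the coframe derivatives together with the motions of $v^1$ and $v^2$ along $\mathcal{N}$ transverse to their own characteristic directions; a direct count yields Cartan characters $s_1 = 2(n-2)$ and $s_2 = 0$, so Cartan's test is satisfied and the generic integral surface depends on $2(n-2)$ functions of one variable, matching both the statement and the hydrodynamic picture of Lemma~\ref{obs}. Because $s_2 = 0$ and the two null directions are distinct and real, the integral surface carries two independent characteristic line fields and can be assembled by propagating Cauchy data along these two transverse characteristic foliations—a method-of-characteristics (Goursat) construction valid in the $C^\infty$ category rather than requiring Cauchy--Kovalevskaya. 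This is the source of the smooth-category claim.

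The hard part will be the torsion computation of the second paragraph: a priori $\bar\delta(P)(A^1,A^2)$ has a component transverse to the null-cone tangent spaces, and its absorbability is not evident. The resolution hinges on the coincidence $D_1 = D_2$ together with the symmetry of $E(P)$, which is exactly the embeddability hypothesis; for a non-embeddable structure the antisymmetric part of $E(P)$ would survive as genuine obstructing torsion, consistent with the fact that in $n=3$ embeddability is \emph{equivalent} to $2$-integrability (Section~\ref{3}). A secondary obstacle is the upgrade from analytic to smooth solvability: I must verify that the system, having vanishing top character and two real characteristics, lies within the scope of the smooth theory of hyperbolic exterior differential systems, so that the characteristic construction genuinely produces $C^\infty$ $2$-secant submanifolds.
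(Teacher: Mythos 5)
Your proposal is correct, but it takes a genuinely different route from the paper's proof. The paper works entirely in an adapted frame: it uses the transitivity of $G$ on pairs of distinct real null directions to put $b(E)$ in the standard position $\mathrm{span}(\ver_2(1,0,\ldots,0),\ver_2(0,\ldots,0,1))$, writes the linear Pfaffian system componentwise, and observes that (i) the rows $\mathrm{d}\omega_{ij}$ with $i,j\notin\{1,n\}$ and the row $\mathrm{d}\omega_{1n}=\sum_k\mathrm{d}\omega_{kk}$ vanish identically modulo the ideal precisely because the torsion has the embeddable form $\bar\delta(P)$, while (ii) the remaining rows carry the independent connection forms $\theta_{1i},\theta_{ni}$, so their torsion is absorbable for free; Cartan's test then gives $s_1=2(n-2)$ and the $C^\infty$ statement is delegated to the theory of hyperbolic linear Pfaffian systems. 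You instead work invariantly on the enlarged bundle of pairs of null directions and isolate the intrinsic obstruction: the component of $\bar\delta(P)(A^1,A^2)$ transverse to $\mathbf{T}_{A^1}\ver_2(\mathcal{N})+\mathbf{T}_{A^2}\ver_2(\mathcal{N})$, which your computation correctly shows is a multiple of $\bigl(E(P)(A^1,A^2)-E(P)(A^2,A^1)\bigr)\bigl((v^1)^\top v^2+(v^2)^\top v^1\bigr)$ and hence vanishes because $P\in\mathcal{P}$ forces $E(P)\in\mathcal{S}^*\odot\mathcal{S}^*$ (Lemma~\ref{PR}); your use of $q=v^1\Phi(v^2)^\top\neq 0$ is exactly where Lorentzian signature enters. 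Your version buys a frame-free identification of \emph{why} embeddability is the relevant hypothesis (the symmetry of $E(P)$ is literally the trace condition defining $\mathcal{P}$), which meshes well with the $n=3$ equivalence of embeddability and $2$-integrability; the paper's version buys an explicit diagonal tableau from which the characters $s_1=2(n-2)$, $s_2=0$ and the hyperbolic structure needed for the smooth category are immediate. Two small caveats: the symmetry of $E(P)$ is a consequence of the embeddable form of the torsion rather than the whole of embeddability (a completely general apparent torsion $T$ need not even be of the form $\bar\delta(P)$, so your obstruction analysis presupposes the First Fundamental Lemma, as does the paper's); and your character count and the $C^\infty$ upgrade are left schematic where the paper carries them out (respectively, cites \cite{Yang1987}) --- you should make the tableau on $\hat{\mathcal{B}}$ explicit to confirm $s_2=0$ rather than asserting it.
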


\begin{proof}
Consider the hyperbolic bilinear form
$x_1x_n = \sum_{k=2}^{n-1} (x_k)^2$, which corresponds to the symmetric matrix
\begin{equation}
\Phi= \begin{pmatrix}
0 & \cdots & -1/2 \\
\vdots & I_{n-2} & \vdots \\
-1/2 &  \cdots & 0
\end{pmatrix}
\end{equation}
Suppose $E \subset \mathbf{T}_p M$ is a bi-secant plane.  
In a neighborhood of $p$, choose a section $b:M \to \mathcal{B}$.  It must be
that $b(E)$ is spanned by two rank-one symmetric matrices that are in the
Veronese image of null vectors.  
For any two null vectors in $V$, there exists an element of $G$ that 
moves these null vectors to the null vectors $y^1=(1, 0, \ldots, 0)$ and $y^2=(0,
\ldots, 0,1)$.  So, one may use an $Od(G)$ frame adaptation to assume that
$b(E)$ is the span of $Y^1=\ver_2(1,0,\ldots,0)$ and $Y^2=\ver_2(0,\ldots,0,1)$. 
To prove the theorem, one must find the conditions on $\mathcal{B}$ that allow
an arbitrary bi-secant plane $E \in Gr_2(\mathbf{T}_pM)$ to be extended to a
bi-secant surface $N \subset M$.   Let $\tilde{E}=b_*(E)$ denote the
``lifted'' image of $E$ in $\mathbf{T}_{b(p)}\mathcal{B}$.  Then
$\omega_{11}\wedge \omega_{nn}|_{\tilde{E}} \neq 0$ and the annihilator of
$\tilde{E}$ is $\{ \omega_{ij},  (i,j) \neq (1,1),(n,n)\}$.  Let $\mathcal{I}$
denote the differential ideal generated by these 1-forms with the independence
condition $\omega_{11}\wedge\omega_{nn}$.  It suffices to
prove the existence of integral manifolds of this differential ideal.

Recall that $\omega_{ij} = \omega_{ji}$ and $\omega_{1n} = \sum_{k=2}^{n-1}
\omega_{kk}$.  By the first-order structure equations, $\mathrm{d}\omega = (\theta +
\tau)^{\top}\wedge\omega - \omega\wedge(\theta+\tau)$, it is clear that
$\mathrm{d}\omega_{ij} \equiv 0$ unless $i$ or $j$ equals $1$ or
$n$.  Also, $\omega_{1n} = \omega_{22} + \cdots + \omega_{n-1,n-1}$ implies
that $\mathrm{d}\omega_{1n}\equiv 0$. 
Thus, the differential generators are 
\begin{equation}
\mathrm{d}\begin{pmatrix}
\omega_{1i} \\
\omega_{in} 
\end{pmatrix}
\equiv 
\begin{pmatrix}
\theta_{1i} + \tau_{1i} & 0  \\
0 & \theta_{ni} + \tau_{ni} 
\end{pmatrix}\wedge\begin{pmatrix}
\omega_{11} \\ \omega_{nn}
\end{pmatrix},\  i=2,\ldots,n{-}1.
\end{equation}
The $\theta$'s that appear here are independent, so any
torsion terms arising from $\tau=P(\omega)$ are always absorbable, so Cartan's
test shows that the system is involutive with solutions depending on
$s_1=2(n-2)$ functions of one variable.
Moreover, this is a hyperbolic linear Pfaffian system in the sense of
\cite{Yang1987}, so the Cartan--K\"ahler theorem applies in the $C^\infty$
category.
\end{proof}

Theorem~\ref{2integrable} is already well-known from the perspective of PDEs, but its proof provides a model for how to approach
$k$-integrability in general.   Either of Corollary~\ref{no2} or
Theorem~\ref{2integrable} shows why hydrodynamic integrability is a trivial
concept for PDEs in $n=2$ variables.  The case of integrability for hyperbolic
Hessian PDEs in $n=3$ variables is detailed in \cite{Ferapontov2009} and
\cite{Smith2009a}.  The case of integrability (equivalently,
2,3-integrability) for symplectic Monge--Amp\`ere equations in $n=4$
variables is detailed in \cite{Doubrov2009}.  In both cases, the geometry
induced from the symmetries of the Veronese cone is used to classify the
integrable equations.  Thus, the extrinsic geometry is tied to the
$Od(GL(n))$-structure over $\Lambda^o$ and the intrinsic geometry is tied to
the induced $Od(G)$-structure on $F^{-1}(0)$.
The mostly-open case of $k$-integrability for $k\geq 3$ and $n\geq 3$ will be studied in Part II.

\section{Conclusion}
A Hessian partial differential equation $M=F^{-1}(0) \subset
\mathrm{Sym}^2(\mathbb{R}^n)$ of any fixed non-degenerate signature in any
number, $n$, of variables admits a geometry, called an embeddable
$Od(G)$-structure, with structure equations that are complete at second-order.  The natural
notion of integrability from PDE theory translates to a natural notion of
geometric integrability for these structures.   Up to conformal factors, the
fiber group of this $Od(G)$-structure is a subgroup of the orthogonal group
$O(n(n+1)/2-1,\pair{\cdot,\cdot})$ for a pseudo-Riemannian structure on $M$,
and it is a representation of the orthogonal group $O(n, \mathrm{d}F)$.  The
structure equations are easy to write down in any dimension, and the structure
functions take values in a finite-dimensional classifying space
$\mathbf{K}_\Phi$ that is given by the 1-jet of a first-order invariant $P \in
\mathcal{Q}_{2}$ along with a scalar curvature $r$ that reflects the conformal
factor of the pseudo-Riemannian structure.

Using the standard theory of Lie pseudo-groups of finite type, several
conclusions can be drawn immediately.  To each point in $\mathbf{K}_\Phi$
there is an associated Hessian PDE, which is locally unique.  There is a
singular foliation of $\mathbf{K}_\Phi$ that separates connected
embeddable $Od(G)$-structures into equivalence classes, and the corresponding moduli
space depends only on the signature of the leading symbol of $F$.  Because the
leaves of $\mathbf{K}_\Phi$ have high co-dimension in the case $n \geq 3$, there are
infinitely-many such equivalence classes. 

Moreover, because any $CSp(n)$-invariant classification of non-degenerate
Hessian PDEs induces a contact-invariant classification of second-order PDEs
that have locally non-degenerate leading symbol, the moduli space of Hessian
PDEs defined by the singular foliation of $\mathbf{K}_\Phi$ also provides a
classification of all such second-order PDEs; however, the full
contact-invariant classification of second-order PDEs will be much finer in
general.

While these are standard results from the theory of Lie pseudo-groups of
finite type, the explicit description of these structures should aid the
investigation of integrability and help to explain the increasing complexity
of PDEs in high dimensions and help identify special sub-classes of PDEs.

\newpage

\appendix

\section{Scaling and the $Od_g$ action}
\label{neg}
Let $V$ be a vector space of dimension $n$ over $\mathbb{R}$ or
$\mathbb{C}$ with the standard basis.  Let $V \odot V$ be identified with the
vector space of symmetric $n \times n$ matrices.

For any subgroup $PH$ of $PGL(n)$, define the representation of $POd$ on
$\mathbb{P}(V \odot V)$ by $POd_{[h]}(A) = [h^{\top} A h]$.  This
representation is faithful.

For $g \in GL(n)$ and $A$ a symmetric matrix, consider the action $Od_g(A) =
g^{\top} A g$. This action does not describe a faithful representation, as $Od_g = Od_{-g}$;
however, the infinitesimal action describes a faithful representation of
$\mathfrak{gl}(n)$:
\begin{lemma}
\label{injective}
Define $od:\mathfrak{gl}(n) \to \mathfrak{gl}(V \odot V)$ by $od_X(A) =
X^{\top} A + A X$.   Then $\ker od = 0$.
\end{lemma}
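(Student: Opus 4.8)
The plan is to show that any $X$ with $X^{\top}A + AX = 0$ for \emph{all} symmetric $A$ must vanish, by feeding the relation a family of test matrices that together constrain every entry of $X$. The cleanest family, and the one best suited to the geometry of this paper, consists of the rank-one symmetric matrices $A = v^{\top}v$ with $v \in V$ arbitrary, since these span $V \odot V$; it suffices to force $vX = 0$ for every row vector $v$.

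Concretely, I would fix $X \in \ker od$ and an arbitrary $v$, and set $w = vX$. Since $X^{\top}v^{\top} = (vX)^{\top} = w^{\top}$ and $v^{\top}vX = v^{\top}w$, the relation $od_X(v^{\top}v) = 0$ becomes $w^{\top}v + v^{\top}w = 0$, that is $w_i v_j + v_i w_j = 0$ for all $i,j$. The diagonal entries give $w_i v_i = 0$ for each $i$, while the off-diagonal entries give $w_i v_j = -v_i w_j$. If $v \neq 0$, one chooses $k$ with $v_k \neq 0$; setting $j = k$ then shows $w_i = -(w_k/v_k)\,v_i$ for every $i$, so $w = c\,v$ is a scalar multiple of $v$. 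Substituting this into the diagonal relation at the index $k$ yields $c\,v_k^2 = 0$, hence $c = 0$ and $w = vX = 0$. I would note that this final step avoids summing $\sum_i v_i^2$, so it remains valid over $\mathbb{C}$ as well as over $\mathbb{R}$. As $v$ is arbitrary, $X = 0$, which is the claim.

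An equivalent route, which I would mention as a cross-check, tests first against $A = I$ to get $X^{\top} = -X$, and then against the diagonal matrices $E_{ii}$: skew-symmetry converts the defining relation into $[A,X] = 0$, and commuting with each $E_{ii}$ forces every off-diagonal entry of $X$ to vanish, so $X$ is simultaneously diagonal and skew-symmetric and hence zero. Either way the computation is elementary linear algebra, so there is no substantial obstacle; the only point needing a little care is the bookkeeping that confirms the chosen test matrices separate all the entries of $X$ — equivalently, that the diagonal constraint is precisely what kills the residual scalar left over once the off-diagonal part of the relation has been used up.
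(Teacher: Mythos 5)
Your argument is correct, and it takes a genuinely different route from the paper's. The paper tests $od_X$ only against \emph{invertible} symmetric matrices $A$: each such $A$ is a non-degenerate bilinear form, the relation says $X^{\top}$ is $A$-skew, and the proof concludes by asserting that $X^{\top}$ lies in $\bigcap_A \mathfrak{so}(n,A)$ --- leaving the triviality of that intersection implicit. You instead test against the \emph{rank-one} symmetric matrices $v^{\top}v$ (which are degenerate for $n\geq 2$ but span $V\odot V$), and the relation $w^{\top}v+v^{\top}w=0$ with $w=vX$ is solved entry-by-entry to force $vX=0$ for every $v$. Your computation is complete and self-contained where the paper's is a one-line appeal to a fact about orthogonal algebras, it works uniformly over $\mathbb{R}$ and $\mathbb{C}$ (as you note, no isotropic-vector issue arises since you never need $\sum_i v_i^2\neq 0$), and it has the pleasant feature of using exactly the Veronese cone $\ver_2(V)$ that drives the rest of the paper. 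Your cross-check via $A=I$ and the $E_{ii}$ is also sound: skew-symmetry converts the relation to $[A,X]=0$, commuting with the $E_{ii}$ forces $X$ diagonal, and a diagonal skew matrix vanishes. The only point worth flagging is the implicit division by $2$ in extracting $w_iv_i=0$ from the diagonal entries, which is harmless in characteristic zero.
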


\begin{proof}
Suppose that $X \in \mathfrak{gl}(n)$ is such that $0 = X^{\top} A + AX$ for
all symmetric matrices $A$.  If $A$ is invertible, then $A$ represents a
non-degenerate symmetric bilinear form, and $X^{\top}$ is a matrix that is 
skew with respect to $A$.
Therefore, $X^{\top}$ lies in the intersection over all Lie algebras of the form
$\mathfrak{so}(n,A) = A\cdot \mathfrak{so}(n)$.  
\end{proof}

Over $\mathbb{C}$, the $Od_g$ action for $g \in GL(n)$ is transitive on
the Veronese cone in $\mathrm{Sym}^2(\mathbb{C}^n)$.  However, over
$\mathbb{R}$ the scaling by $-1$ is never possible through the $Od$ action, as
$Od_{\lambda I}(A) = \lambda^2 A$.  This annoyance can be dealt with in three
ways. 

First, one could consider only the action of $[g] \in PGL(n)$, and then the
target $[g^{\top} A g]$ is a representative in $PGL(n(n+1)/2)$ without
ambiguity.  This representation, called $POd(PGL(n))$, is faithful; however, it
disguises the structure equations that arise in the equivalence problem.


Alternatively, observe that the infinitesimal action $od_X$ actually does
allow for arbitrary scalings, as $X= -\frac12\lambda I$ shows.  So, 
define the group $Od(GL(n))$ as the collection of actions $\{ A \mapsto
\lambda g^{\top} A g, g \in GL(n), \lambda =  \pm 1 \}$.  This group
$Od(GL(n))$ is not really a representation of $GL(n)$, rather it can be
described as the semi-direct product of the faithful representation $POd(PGL(n))$ and the
one-dimensional scaling group $\mathbb{R}^\times$.  Note that the Lie algebra of
$Od(GL(n))$ is isomorphic to the Lie algebra of $\mathfrak{gl}(n)$, though the
scaling action is halved.

Another group of particular interest in this article is $G=CO(n,\Phi)$ for a
non-degenerate symmetric bilinear form $\Phi$.  The group $Od(G)$ is defined
analogously as $\{ A \mapsto \lambda g^{\top} A g,\ g \in CO(n,\Phi), \lambda
= \pm 1 \}$.  The Lie algebra of this group is $\{ A \mapsto X^{\top}A + AX +
\lambda A, X \in \mathfrak{so}(n,\Phi), \lambda \in \mathbb{R}\}$.  Of course,
this is isomorphic to $\mathfrak{g}=\mathfrak{so}(n,\Phi) + \mathbb{R}I$.
Thus, when considering irreducible $Od(G)$-modules, one needs to examine the
irreducible representations of $\mathfrak{so}(n,\Phi)$.

Finally, if both of these approaches are distasteful, then one could simply pretend that this is a
study of non-degenerate Hessian PDEs over the complex numbers and forget this
detail entirely.

\newpage

\bibliographystyle{amsalpha}
\bibliography{hydro}

\end{document}